\newcommand*\patchAmsMathEnvironmentForLineno[1]{%
	\expandafter\let\csname old#1\expandafter\endcsname\csname 
	#1\endcsname
	\expandafter\let\csname oldend#1\expandafter\endcsname\csname 
	end#1\endcsname
	\renewenvironment{#1}%
	{\linenomath\csname old#1\endcsname}%
	{\csname oldend#1\endcsname\endlinenomath}}%
\newcommand*\patchBothAmsMathEnvironmentsForLineno[1]{%
	\patchAmsMathEnvironmentForLineno{#1}%
	\patchAmsMathEnvironmentForLineno{#1*}}%
\newcommand{\blind}{0}
\def\mathbold{\boldsymbol}
\def\calA{{\cal A}}
\def\bv{\mathbold{v}}
\def\bA{\mathbold{A}}
\def\bX{\mathbold{X}}
\def\bx{\mathbold{x}}
\def\by{\mathbold{y}}
\def\R{{\mathbb{R}}}
\def\P{{\mathbb{P}}}
\def\E{{\mathbb{E}}}
\def\tby{{\widetilde{\by}}}
\def\bdelta{\mathbold{\delta}}
\def\tbbeta{\widetilde{\bbeta}}
\def\hbdelta{\widehat{\bdelta}{}}
\def\bSigma{\mathbold{\Sigma}}
\def\hbSigma{\widehat{\bSigma}{}}
\def\eps{\epsilon}
\def\bbeta{\mathbold{\beta}}
\def\hbbeta{\widehat{\bbeta}{}}
\def\argmin{\mathop{\rm arg\, min}}
\def\defas{\stackrel{\text{\tiny def}}{=}}
\def\bu{\mathbold{u}}
\def\hbu{\widehat{\bu}{}}
\def\bv{\mathbold{v}}
\def\hbv{\widehat{\bv}{}}
\newtheorem{theorem}{Theorem}[section]
\newtheorem{lemma}[theorem]{Lemma}
\newtheorem{assumption}{Assumption}
\newtheorem*{remark}{Remark}
\newcommand{\bruce}{}
\begin{document}

\def\spacingset#1{\renewcommand{\baselinestretch}%
{#1}\small\normalsize} \spacingset{1}


\title{Transfer Learning with Large-Scale Quantile Regression}

\if0\blind
{
  \author{Jun Jin$^1$, Jun Yan$^{1,2}$, Robert H. Aseltine$^2$, Kun Chen$^{1,2}$\thanks{Corresponding author; kun.chen@uconn.edu}\\    
$^1$\textit{Department of Statistics, University of Connecticut}\\
$^3$\textit{Center for Population Health, University of Connecticut Health Center}
}  
} \fi

\maketitle 

\begin{abstract}
  Quantile regression is increasingly encountered in modern big data
  applications due to its robustness and flexibility.
  We consider the scenario of learning the conditional quantiles of a specific
  target population when the available data may go beyond the target and be supplemented from other sources that possibly share similarities with the
  target. A crucial question is how to properly distinguish and utilize
  useful information from other sources to improve the quantile estimation and
  inference at the target. We develop transfer learning methods for
  high-dimensional
  quantile regression by detecting informative sources whose models
  are similar to the target and utilizing them to improve
  the target model.
  We show that under reasonable conditions, the detection of the
  informative sources based on sample splitting is consistent. Compared to the
  naive estimator with
  only the target data, the transfer learning estimator achieves a
  much lower error rate as a function of the sample sizes,
  the signal-to-noise ratios, and the similarity measures among the target and
  the
  source models. Extensive simulation studies demonstrate the superiority of our
  proposed approach. We apply our methods to tackle the problem of detecting
  hard-landing risk for flight safety and show the benefits and insights gained
  from transfer learning of three different types of airplanes: Boeing
  737, Airbus A320, and Airbus A380.

\bigskip
\noindent%
{\it Keywords:} data fusion; hard landing; informative source; robust regression; targeted learning
\end{abstract}

\doublespacing

\section{Introduction}
\label{sec:intro}

Quantile regression \citep{koenker1978regression,Koenker2001} is commonly encountered in
modern applications. It goes beyond mean regression and
allows the relationship between the outcome of interest and the features to
vary across the conditional quantiles of the outcome distribution. One
prominent example arises in analyzing and predicting hard landing for flights
with Quick Access Recorder (QAR) data \citep
{wang2014analysis,chen2021detection}, where the main objective is to study the
risk of high vertical acceleration at the touchdown point of a flight with
measurements from QAR during the flight. There are thousands of QAR
measurements, while the number of flights is relatively small for some airplane
models, such as the Airbus-380. Such high-dimensional but small-sample quantile
regression problems \citep{belloni2011, wang2012quantile} are prevalent in a
variety of fields. Some other examples include the modeling of the quantiles of
suicide risk \citep{rogers2018severity} to identify high-risk patients with
medical diagnoses and medication records from electronic health records
(EHR), and the use of quantile regression to reveal a comprehensive picture of
the varying effects of market factors on asset pricing \citep{maiti2021quantile}.
Applications with quantile regression are also seen in actuarial science \citep{baione2021application}, finance \citep{demir2022fintech}, environmental research \citep{reich2012spatiotemporal}, and medical studies \citep{wei2006quantile,huang2017quantile}. For an overview of the theory, methods and applications of quantile regression, we refer to \citet{Koenker2017}.

In many applications, the goal is to learn the conditional quantiles of an
outcome for a specific target population, while the available data may go well
beyond the target and may be from various other sources that bear certain
similarities with the target. For example, in the airplane hard landing problem,
an investigator may be only interested in Airbus-380, for which the target
population consists of only flights with Airbus-380; however, data from other
airplanes such as Airbus-320 or Boeing-737 may also be available. In healthcare,
a specific healthcare provider, e.g., a children's medical center, may only be
interested in building a risk prediction model of a disease/condition for its
own patient population, while patient data from other age ranges or other
hospitals may also be available \citep{XuChang2022}. In asset pricing, the main interest may be on a
new portfolio with limited historical data, while much more data may be
available on other similar portfolios or market indices like S\&P 500 index \citep{gu2020empirical}.

In all the above problems, a crucial question is how to properly transfer the
information from the sources to the target to best perform the estimation and
inference of the conditional quantiles for the target. Apparently, neither the
approach of only using the information pertaining to the target or the approach
of naively combining all the available information to fit a one-size-fits-all
model would work well. While the former fails to seek any potential gain by
completely ignoring the additional information from the sources, the latter may
result in ``negative transfer'' \citep{rosenstein2005transfer} by ignoring the
potential heterogeneity among the sources and the target. Such heterogeneity
may arise from various aspects and ultimately makes the transfer learning
problem challenging. First, even if we assume the sources and the target share
the same set of features, these features may follow different distributions
across the sources and the target. Second, the true model of the target may not
be the same as that of each of the sources; rather, it is more realistic to
assume that these models can be different and only a subset of source models
may be similar enough to the target model to permit ``positive transfer''. Last
but not the least, the error distributions of different models could be
different, not only in their magnitudes (variances) and types (Gaussian vs.
non-Gaussian), but also in their quantile levels. To our knowledge, existing
transfer learning frameworks mainly deal with the first and the second types of
heterogeneity. \cite{bastani2021predicting} designed a two-step procedure
(proxy step and joint step) of transfer learning for high-dimensional linear
regression with one target and one source. \citet{li2020transfer} proposed a
procedure to realize transfer learning of high-dimensional linear models
between one target and several sources, which first trains a fusion model with
all data and then performs debiasing with the target data. \citet
{tian2022transfer} made an extension along the same line for the generalized
linear models. A concurrent study from \citet
{huang2022transfer} explored the direct extension of the
two-step framework proposed in \citet{li2020transfer} for quantile regression
models. See \citet{pan2009survey}, \citet{weiss2016survey}, and \citet
{cheng2020hybrid} for some comprehensive surveys on transfer learning in
machine learning, statistics, and engineering.

We propose a novel transfer learning procedure for high-dimensional quantile
regression to properly utilize information from external data sources while
allowing for all three types of heterogeneity in feature distributions, model
coefficients, and error distributions. Our approach is built upon the two-step
framework in \citet{li2020transfer}, and a transformation step is added to
handle the heterogeneity of the error types and quantile levels, which is
specific to quantile regression problems \citep{chen2020distributed}. Compared
with
\citet{li2020transfer} and \citet{tian2022transfer}, our proposed method relaxes
 the condition on error distribution and enables transfer learning for quantile
 regression with a non-differentiable loss. Comparing with \citet
 {huang2022transfer}, our proposed method executes sparse quantile regression problems distributively on the different sources rather than pooling them and utilizes surrogate responses to avoid the quantile loss, making our method more suitable for large-scale real-world applications.
We develop an informative-source detection procedure based on sample splitting
and show that under reasonable conditions, the procedure is consistent.
Compared to the naive model fitted with the target data alone, the proposed
transfer-learning model enjoys a sharper convergence rate as a function of
sample sizes, signal-to-noise ratio, and similarities among the target and the
source models.
Extensive simulation studies verify the superiority of our approach. 
We apply the
proposed approach to tackle the problem of detecting hard-landing risk for
flight safety, and show that with the help of Airbus-320 and Boeing-737
datasets, the transfer learning model for Airbus-380 gives more accurate
estimation 
and the resulting feature contributions are visualized and highly interpretable.  

The rest of the paper is organized as follows. In \Cref{sec:model}, we present
the transfer learning setup with high-dimensional quantile regression. We
propose the estimation procedures via transfer learning in \Cref{sec:est-method}
and provide their theoretical guarantees in \Cref{sec:theory}. 
Numerical experiments and the application on airplane hard-landing are provided
in \Cref{sec:simulation} and \Cref{sec:real-data}, respectively.
\Cref{sec:discussion} gives some concluding remarks and future research
directions. All the technical details are provided in the Appendix.

\section{Transfer Learning Setup for Quantile Regression}
\label{sec:model}

Before carrying out the model setup, we introduce some notations. For two constants $c_1$ and $c_2$, we denote $c_1 \vee c_2 = \max(c_1,c_2)$ and $c_1 \wedge c_2 = \min(c_1,c_2)$. 
For a vector $\bv = {( {{v_1}, \ldots ,{v_n}} )^\top}$, we denote $\| \bv \|_0 = \sum\nolimits_{i = 1}^n I(v_i\neq 0)$ where $I(\cdot)$ is the indicator function as the $\ell_0$ norm, $\| \bv \|_1 = \sum\nolimits_{i = 1}^n | {{v_i}} |$ as the $\ell_1$ norm,  $\| \bv \| = \sqrt {\sum\nolimits_{i = 1}^n {v_i^2} } $ as the $\ell_2$ norm, and $\| \bv \|_\infty = {\max _i}| {{v_i}} |$ as the infinity norm of $\bv$. 
For a matrix $\bA \in {R^{n \times n}}$, we denote its infinity norm ${\| \bA \|_\infty } = {\max _i}\sum\nolimits_{j = 1}^n {| {{A_{ij}}}|} $. For two sequences $\{ {{a_n}} \}_{n = 1}^\infty$ and $\{ {{b_n}} \}_{n = 1}^\infty$, we denote ${a_n} \asymp {b_n}$ if and only if ${a_n} = O( {{b_n}} )$ and ${b_n} = O( {{a_n}} )$. For two non-zero real sequences $\{ {{c_n}} \}_{n = 1}^\infty$ and $\{ {{d_n}} \}_{n = 1}^\infty$, ${c_n} \gg {d_n}$ stands for $| {{c_n}/{d_n}} | \to 0$. We use $\lesssim$ and $\gtrsim$ to indicate that the inequality holds up to some multiplicative numerical constant. 


Our main interest is to learn the conditional quantiles of an outcome variable
of interest, $y^{(0)} \in \mathbb{R}$, given a set of feature variables,
$\bx^{(0)} \in \mathbb{R}^{p}$, at a specified target population. 
We assume that the following quantile regression model holds at the target, 
\begin{equation}\label{eq:target-model}
\left\{ \begin{array}{l}
y^{(0)} = \bx^{(0)^\top}{\bbeta _\tau ^{\left( 0 \right)}} + \eps^{(0)};\\
\P\left( {\eps^{(0)} \le 0} \right) = {\tau},
\end{array} \right.
\end{equation}
where 
${\bbeta _\tau ^{( 0 )}}$ is the true regression coefficient for the
target population under the quantile level $\tau$, and $\eps^{(0)}\in\R$ is
the random error term with a density function ${f^{( 0 )}}(  \cdot  )$. When $\tau$ is specified and no confusion arises, we may
abbreviate ${\bbeta _\tau ^{( 0 )}}$ as $\bbeta^{(0)}$. To deal with
high dimensionality, we assume that the true coefficient vector $\bbeta^{(0)}$
is sparse with $s_0$ nonzero elements and ${\| {{\bbeta ^{( 0 )}}} \|_\infty }$ is bounded. Here $\bx^{(0)}$ is independent of the
noise $\eps^{(0)}$, and that the density of $\eps^{(0)}$ exists with $\P(
{\eps^{(0)} \le 0} ) = \tau$. This model thus allows the error term to be
heavy-tailed, such as Cauchy distribution or $t$ distribution. It is worth
noting that the true coefficient vector $\bbeta^{(0)}$ is the solution of the
following population-level optimization problem
\begin{equation}\label{eq:population-est}
  \bbeta^{(0)} = \argmin_{\bbeta  \in {\R^p}} \E \rho_{\tau} \left(y^{(0)} - \bx^{(0)^\top}\bbeta\right), 
\end{equation}
where $\rho_{\tau}(u) = u(\tau - I(u\le 0))$ is the quantile loss
function \citep{koenker2005quantile}.

In addition to the target population, suppose we also have access to $K$ source
populations with the same set of features and the same response. For each $k
=1,\ldots, K$, we assume that the following source-specific quantile regression
model holds, 
\begin{equation}\label{eq:source-model}
\left\{ \begin{array}{l}
y^{(k)} = {\bx^{(k)^\top}}{\bbeta _\tau ^{\left( k \right)}} + \eps^{(k)};\\
\P\left( {\eps^{(k)}\le 0} \right) = {\tau}.
\end{array}\right. 
\end{equation}
Here the regression coefficient vector ${\bbeta _\tau ^{( k )}}$ or
simply ${{\bbeta ^{( k )}}}$, can be different from that of the
target $\bbeta^{(0)}$. Similarly, we assume $\bbeta^{(k)}$ has bounded infinity norm, sparse with $s_k$
nonzero elements, and the error term $\eps^{(k)}$ is independent with
${{\bx^{(k)}}}$ and satisfies the same density quantile condition. ${f^{( k )}}( \cdot )$ is the density function of the random error $\eps^{(k)}$.

Intuitively, the rationale of transfer learning is that, if some of the source
models are similar ``enough'' to the target model, we may expect these
sources to provide additional valuable information about the target model. It is
then pivotal to quantify the similarity between the target model
in~\eqref{eq:target-model} and the source models in~\eqref{eq:source-model}. A
natural idea is to use the $\ell_1$ distance between the coefficient vector of
the target model and that of each source model, i.e., $\|{\bdelta ^{( k )}}\|_1 = \|\bbeta^{(0)} -
\bbeta^{(k)}\|_1$. When this distance is smaller than certain threshold $d>0$, we
say that the $k$th source is $d$-informative. 
Consequently, we define
$$
\mathcal{A} = \left\{ {1 \le
    k \le K: {\|\bdelta ^{\left( k \right)}}\|_1 \le d}\right\}
$$
to be the $d$-informative set.


Now suppose we observe $n_0$ independently and identically distributed (i.i.d.) samples from the target population under the model in \eqref{eq:target-model}, consisting of a feature matrix $\bX^{(0)} = (\bx_1^{(0)}, \ldots,
\bx_{n_0}^{(0)})^\top\in\R^{n_0\times p}$ \bruce{whose rows are i.i.d. from Gaussian distribution with mean zero and covariance matrix ${\bSigma ^{( 0 )}}$}, and a response/outcome vector $\by^{(0)} = (y_1^{(0)}, \ldots,
y_{n_0}^{(0)})^\top\in\R^{n_0}$. 
In addition to the target data, suppose we also have i.i.d. samples from each of the $K$ source populations under the models in \eqref{eq:source-model}. For $k = 1,\ldots, K$, the data from source $k$ are denoted as $\{(\bX^{(k)}, \by^{(k)})\}_{k=1}^K$ with $\bX^{(k)}\in \R^{n_k\times p}$ \bruce{whose rows are i.i.d. from Gaussian distribution with mean zero and covariance matrix ${\bSigma ^{( k )}}$}, and $\by^{(k)}\in \R^{n_k}$ where $n_k$ is the sample size at source $k$. In the sequel we denote ${\underline{n}} = \min_{k = 1}^{K} {n_k}$, ${\underline{n}_{\calA}}  = {\min _{k \in {\calA \cup \{ 0 \}}}}{n_k}$, and ${\overline{n}_{\calA}}  = {\max _{k \in {\calA \cup \{ 0 \}}}}{n_k}$.

Our main task is then to develop a transfer learning procedure to estimate
$\bbeta^{(0)}$ in the target model~\eqref{eq:target-model} with both the target
data $(\bX^{(0)}, \by^{(0)})$ and the additional source data $\{(\bX^{(k)}, \by^{(k)})\}_{k=1}^K$. In particular, we are interested in clarifying how the potential benefit of transfer learning is related to the specifications of the error distributions, the distributional-shift of the features, the dimensions and sparsity levels of the models, and the similarity levels of the sources models to the target.

Before we dive into the transfer learning approach, 
we first present a model that one would use if only provided the target data. Specifically, we consider the following $\ell_1$-penalized quantile regression estimator 
\begin{equation}
  \label{eq:classical}
  \hbbeta^{(0)} = \argmin_{\bbeta  \in {\R^p}} \frac{1}{n_0} \sum_{i=1}^{n_0} \rho_{\tau} (y_i^{(0)} - {\bx_i^{(0)^\top}}\bbeta) + \lambda_0 \left\|\bbeta\right\|_1,
\end{equation}
where $\lambda_0$ is the regularization parameter. This estimator enjoys consistency and support recovery properties under some regular conditions on the target data \citep{fan2014adaptive}. We refer to this method as the ``target-only method'' which serves as the baseline to be compared with transfer-learning approach. 

\section{Estimation Procedures via Transfer Learning}
\label{sec:est-method}

We develop procedures for estimating the quantile regression model at the target via transfer learning. We start with the oracle case that the informative set $\mathcal{A}$ is given and then consider the estimation of the informative set $\mathcal{A}$ to complete the puzzle.

\subsection{Oracle Procedure with Known Informative Set}\label{sec:oracle-estimation}

Suppose the informative set $\calA$ is given, that is, we've known a priori that it is the most beneficial to consider information transfer from the sources belonging to $\calA$. In this case, we realize transfer learning by a four-step procedure, 
consisting of (1) single-source modeling, (2) response surrogation, (3) fusion learning, and (4) debiasing.

\begin{figure}[tbp]
\centering
\includegraphics[width=0.45\textwidth]{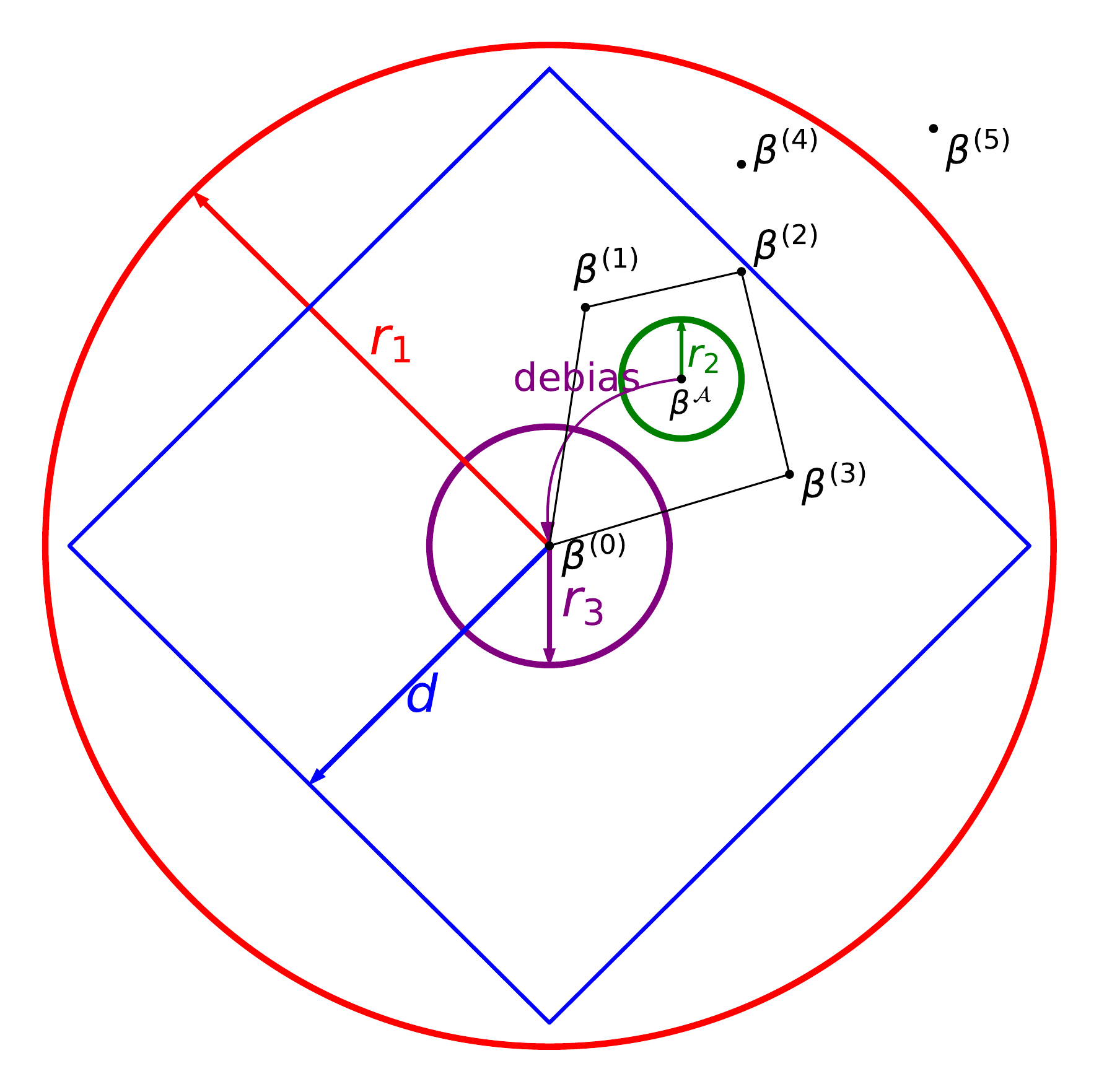}
\caption{Schematic of the procedures in transfer learning quantile regression.}
\label{fig:schematic}
\end{figure}

Figure~\ref{fig:schematic}, inspired by Figure 5 in \citet{tian2022transfer}, illustrates how the proposed procedure can possibly lead to an improved estimator of $\bbeta^{(0)}$ with information from the informative sources. With the limited sample size $n_0$ at the target, the convergence rate of the target-only estimator under $\ell_2$-norm is ${r_1} = \sqrt {{s_0}{\log ( {p \vee {n_0}} )}/{n_0}} $, which is the radius of the red circle centered at the true coefficient vector $\bbeta^{(0)}$. Suppose sources 1, 2, and 3 are $d$-informative among the five sources, their corresponding true coefficient vectors  ${\bbeta ^{( 1 )}}$, ${\bbeta ^{( 2 )}}$, and ${\bbeta ^{( 3 )}}$ are then within the blue square. To start the algorithm, firstly, for each $k\in \calA \cup \{0\}$, we estimate $\bbeta^{(k)}$ using $\ell_1$-penalized quantile regression as in \eqref{eq:classical} (Step 1), and then adopt the method in \citet{chen2020distributed} to construct surrogate responses in order to simplify the quantile regression problem to linear regression (Step 2). Steps 1 and 2 are designed to locate ${\bbeta ^{( 1 )}}$, ${\bbeta ^{( 2 )}}$ and ${\bbeta ^{( 3 )}}$ as the best linear regression coefficients of the surrogate responses on their covariates. The third and the fourth steps then conduct fusion learning and debiasing, respectively \citep{li2020transfer}. To be more specific, Step 3 locates $\hbbeta^{\cal A}$ through combining information from the target and the informative sources; this is with a convergence rate $r_2$ (the radius of the green circle) and the estimator is inside the convex hull established by ${\bbeta ^{( k )}}$, $k \in \calA \cup \{0\}$. The exact location of ${\bbeta ^\mathcal{A}}$, which is the target or the probabilistic limit for $\hbbeta^{\cal A}$, is determined by a weighted linear transformation of the $\bbeta^{(k)}$s from the informative sources, where the weights are associated with the sample size and the feature covariance matrix of each source. More details can be found in Section~\ref{sec:proofs} in the Appendix.
Finally, Step 4 performs debiasing only using the target information, by which the final estimator is within the circle around $\bbeta^{(0)}$ with a radius $r_3$. Typically, $r_3$ is expected to be larger than $r_2$ due to a bias-variance trade-off, but it could be much smaller than $r_1$, the convergence rate of the target-only model.

The details of our proposed procedure are presented below.\\
\noindent {\bf Step 1 (Single-source modeling):} For each $k\in \calA \cup \{0\}$, we estimate $\hbbeta^{(k)}$ by
  an $\ell_1$-penalized quantile regression,
  \begin{equation}\label{eq:initial-est}
  \hbbeta^{(k)} = \argmin_{\bbeta  \in {\R^p}}  {\frac{1}{{{n_k}}}\sum\limits_{i = 1}^{{n_k}} \{\tau - I( {y_i^{\left( k \right)} - \bx_i^{(k)^\top}\bbeta  \le 0} )\} \left( {y_i^{\left( k \right)} - \bx_i^{(k)^\top}\bbeta } \right)}  + {\lambda _0}\left\|\bbeta\right\|_1.
  \end{equation}
  
\noindent {\bf Step 2 (Response surrogation):} For each $k \in \calA \cup \{0\}$, the kernel
  density estimator for the density of the error evaluated at the point of zero is given by
  \begin{equation}
  \label{eq:est_density}
     { {\widehat f}^{\left( k \right)}}( 0 ) = \frac{1}{{n_k}h^{\left( k \right)}}\sum\limits_{i = 1}^{{n_k}} G\left(\frac{ y_i^{\left( k \right)} - \bx_i^{(k)^\top}{\hbbeta ^{\left( k \right)}}}{h^{\left( k \right)}}\right),
  \end{equation}
  where $G$ is a kernel function satisfying Assumption~\ref{assum:A6} (to be shown in Section \ref{sec:theory}) with $h^{( k )}$ being its bandwidth. A surrogate response is then constructed as
  \begin{equation}
  \widetilde{y}_i^{\left( k \right)} = \bx_i^{{{\left( k \right)}^\top}}{\hbbeta^{\left( k \right)}} - \{ {\widehat f}^{\left( k \right)}( 0 )\}^{-1}\left\{ {I\left( {y_i^{\left( k \right)} - \bx_i^{(k)^\top}{\hbbeta^{\left(k \right)}} \le 0} \right) - \tau} \right\}, \qquad i = 1,\ldots, n_k. \label{eq:surrogate} 
  \end{equation}
Denote 
${\tby^{( k )}} = (\widetilde{y}_1^{( k )}, \ldots, \widetilde {y}_{n_k}^{( k )})^\top$ as the surrogate response vector. 
  
\noindent {\bf Step 3 (Fusion learning):} We fit an $\ell_1$-penalized regression with the surrogate responses
  $\tby^{(k)}$ with all data
  \begin{equation}
  \label{eq:fusion-lasso}
  \hbbeta^{\cal A}= \argmin_{\bbeta  \in {\R^p}} \frac{1}{2(n_{\cal A}+ {n_0})}
  \sum_{k \in \calA \cup \{0\}} \left\| \tby^{(k)} - \bX^{(k)}\bbeta\right\|^2 + {\lambda _1} \left\|\bbeta\right\|_1,
  \end{equation}
  where $n_{\cal A} = \sum_{k \in \cal A} n_k$ is the total sample size of $\cal A$.
  
\noindent {\bf Step 4 (Debiasing):} We fit an $\ell_1$-penalized regression with the target data 
\begin{equation}\label{eq:debias}
{\hbdelta ^{\cal A}} = \mathop {\arg \min }\limits_{\bdelta  \in {\R^p}} \frac{1}{{2{n_0}}}{\left\| {{\tby^{\left( 0 \right)}} - {\bX^{\left( 0 \right)}}\left( {{\hbbeta ^{\cal A}} - \bdelta } \right)} \right\|^2} + {\lambda _2}{\left\| \bdelta  \right\|_1}
\end{equation}
Finally, the oracle quantile regression estimator is given as 
\begin{equation}\label{eq:oracle-est}
  {\widehat \bbeta } = {\hbbeta^\mathcal{A}} - {\hbdelta^\mathcal{A}}.
\end{equation}

In comparison to the concurrent study by \citet{huang2022transfer}, our proposed
estimation method could lead to superior computational efficiency in large-scale
settings. The fusion learning step in the concurrent study solves a pooled
$\ell_1$-penalized quantile regression with sample size $n_{\cal A}$. 
In contrast, our method trains a $\ell_1$-penalized quantile regression for the target and each source individually, which can be implemented in parallel with computation time now largely driven by the maximum local sample size ${{\max }_{k \in {\cal A} \cup \{ 0 \}}}n_k$. 
Moreover, our subsequent fusion step with the pooled samples becomes a Lasso problem through the adoption of the surrogate response, which is much easier to solve than its counterpart of $\ell_1$-penalized quantile regression.

\subsection{Identification of Informative Set}

In the previous section, the informative set $\calA$ is assumed to be known as a
priori, which is not practical in many real applications. Motivated by \citet{tian2022transfer}, we develop an effective method to detect the informative set $\calA$. 

The proposed detection method has three steps. Firstly, we randomly
split the target data into two sub-samples of equal size, one for training (${\cal I}$) and another for testing (${\cal I}^c$). Second, we run the transfer learning procedure with each source data and the target training data, to produce a set of single-source transfer learning estimators. These transfer learning estimators are evaluated on the target testing data. Finally, we determine the informative sources by comparing the losses incurred by the transfer learning estimators to that of the target-only estimator. Figure~\ref{fig:schematic-detect} illustrates the procedure of informative sources detection and shows its similarity to the idea of cross validation. The blue lines indicate the operations related to the half of the target data for training (blue half-circle on the right), and the red lines indicate the operations related to the other half of the target data for testing (red half-circle on the left), while the black lines indicate the comparisons between the target-only estimator and the single-source transfer learning estimators for the detection of informative sources.

\begin{figure}[tbp]
\centering
\includegraphics[width=0.5\textwidth]{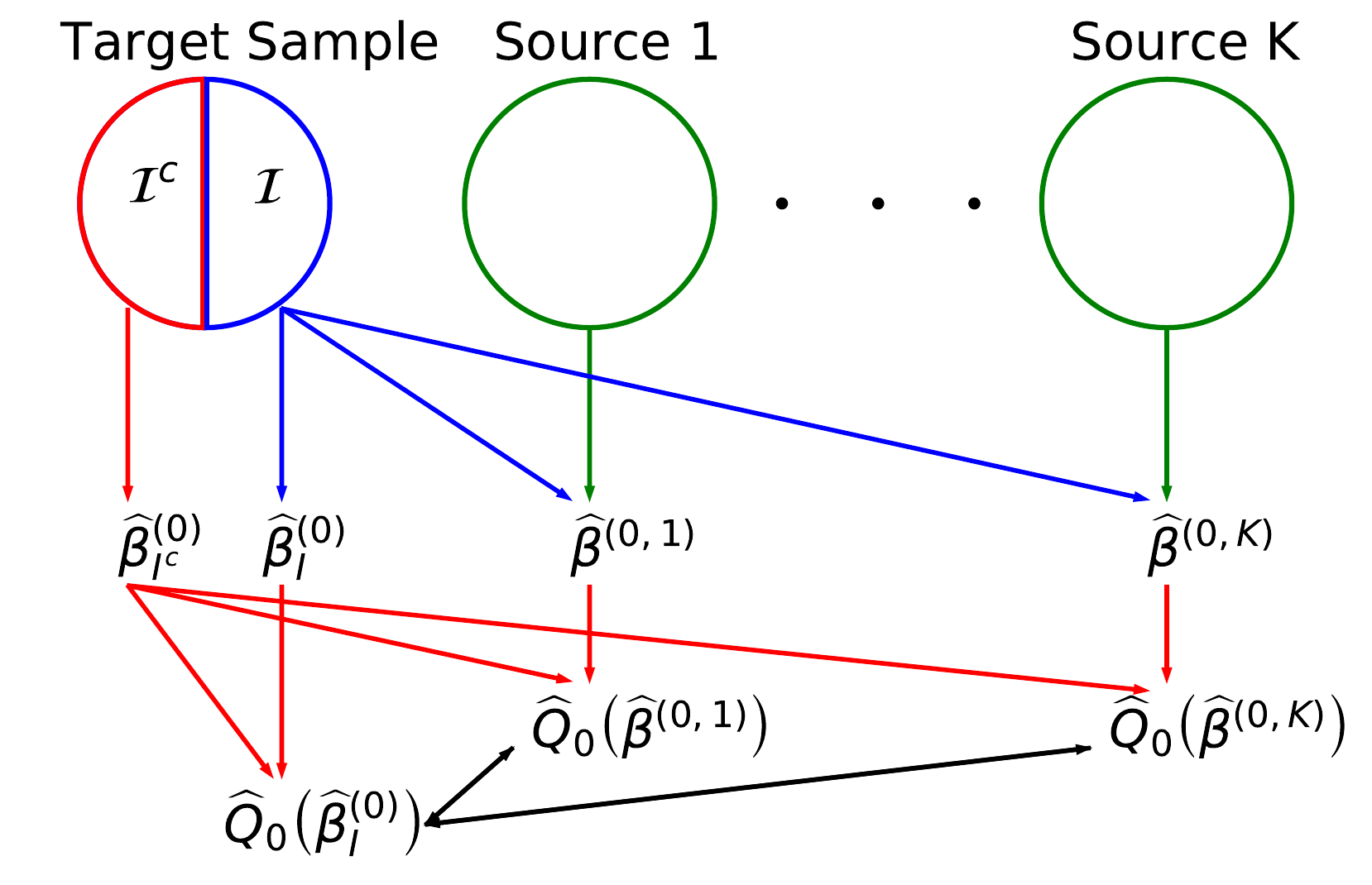}
\caption{Schematic of the procedures in informative set detection.}
\label{fig:schematic-detect}
\end{figure}

To present the details, we first introduce the loss function that is used to evaluate the estimators. The population-level loss on the target model is defined as 
\begin{equation}
\label{eq:Q0}
{Q_0}\left( \bbeta \right) = \E\left\{ {{{\left( {\widetilde y^{(0)} - {\bx^{(0)^\top}}\bbeta} \right)}^2}} \right\},
\end{equation}
where $\widetilde y^{(0)} = {\bx^{(0)^\top}}{\bbeta ^{( 0 )}} - \{ {f^{( 0 )}}( 0 ) \}^{-1}\{ {I( {y^{(0)} - {\bx^{(0)^\top}}{\bbeta
      ^{( 0 )}} \le 0} ) - \tau} \}$ and the expectation is taken with respect to the target distribution. This loss function was adopted by \citet{chen2020distributed} in a distributed quantile regression setting.  The corresponding finite-sample loss evaluated on the target testing samples, with indices $i \in {\cal I}^c$, is calculated as:
\begin{equation}\label{eq:emprical-loss}
{\widehat Q_0}\left( \bbeta \right) = \frac{1}{{{n_0/2}}}\sum\limits_{i \in {{\cal I}^c}} {{{\left( {\widetilde y_i^{(0)} - \bx_i^{(0)^\top}\bbeta} \right)}^2}} ,
\end{equation}
where $\widetilde y_i^{(0)} = \bx_i^{(0)^\top} \hbbeta _{{{\cal I}^c}}^{( 0 )} -
\{{{\widehat f}^{( 0 )}}( 0 )\}^{-1}\{ {I( {y_i^{(0)} - \bx_i^{(0)^\top}\hbbeta
_{{{\cal I}^c}}^{( 0 )} \le 0} ) - \tau}\}$ and 
\begin{equation*}
{{\widehat f}^{\left( 0 \right)}}( 0 )
= \frac{2}{{n_0}h_{{\cal I}^c}^{\left( 0 \right)}}\sum\limits_{i \in {{\cal I}^c}} {G\left( {\frac{{y_i^{(0)} - \bx_i^{(0)^\top}\hbbeta _{{{\cal I}^c}}^{\left( 0 \right)}}}{h_{{\cal I}^c}^{\left( 0 \right)}}} \right)} ,
\end{equation*}
with $\hbbeta _{{{\cal I}^c}}^{( 0 )}$ denoting the $\ell_1$-penalized quantile regression estimator with the target testing data.

With the above definitions, the detailed procedure for detecting the
informative set $\calA$ is as follows.

\noindent {\bf Step 1}: We randomly divide the target sample into two 
  subsamples $S_1 = (\bX^{(0)}_{\cal I}, \by^{(0)}_{\cal I})$, $S_2 = (\bX^{(0)}_{{\cal I}^c}, 
  \by^{(0)}_{{\cal I}^c})$, where ${\cal I}$ is a subset of $\{1, 2, \ldots, n_0\}$ with 
  cardinality $| {\cal I} | \approx n_0/2$.  
  Then we calculate $\hbbeta _{\cal I}^{( 0 )}$ by performing $\ell_1$-penalized quantile regression with $S_1$. For each $1\le k\le K$, we obtain $\hbbeta^{(0,k)}$ by running Steps 1--3 in \Cref{sec:oracle-estimation} with the sample $({\bX_{\cal I}^{( 0 )},\by_{\cal I}^{( 0 )}} ) \cup ( {{\bX^{( k )}},{\by^{( k )}}} )$. 

\noindent {\bf Step
  2}: On $S_2 = (\bX^{(0)}_{{\cal I}^c}, 
  \by^{(0)}_{{\cal I}^c})$, we run $\ell_1$-penalized quantile regression to obtain $\hbbeta
  _{{{\cal I}^c}}^{( 0 )}$, then calculate the loss $\widehat Q_0(\hbbeta _{\cal I}^{( 0 )})$ and $\widehat Q_0(\hbbeta^{(0,k)})$ defined in~\eqref{eq:emprical-loss} for
  each $k\in \{1, 2, \ldots, K\}$.

\noindent {\bf Step 3}: The estimated informative set is
    \begin{equation}\label{eq:est-of-A}
  \widehat {\cal A} = \left\{ {1\leq k \leq K:{{\widehat Q}_0}\left( {{\hbbeta^{\left( 0,k \right)}}} \right) \le \left( {1 + {\varepsilon _0}} \right){{\widehat Q}_0}\left(\hbbeta _{\cal I}^{\left( 0 \right)} \right)} \right\},
  \end{equation}
  with a given small value $\varepsilon_0 \in ( {0, c_\varepsilon} ]$ that controls the strictness of selection. Here $c_\varepsilon$ is a constant that satisfies Assumption~\ref{assum:A8} below. In practice, we can estimate ${c_\varepsilon }$ as,
\begin{equation}
\label{eq:E0suggest}
{\widehat c_\varepsilon } = \mathop {\inf }\limits_{k \in \left\{ {1, \ldots ,K} \right\}} \frac{{{{\left( {{{\hbbeta }^{\left( k \right)}} - {{\hbbeta }^{\left( 0 \right)}}} \right)}^\top}{{\hbSigma }^{\left( 0 \right)}}\left( {{{\hbbeta }^{\left( k \right)}} - {{\hbbeta }^{\left( 0 \right)}}} \right)}}{{{{\widehat Q}_0}\left( {{{\hbbeta }^{\left( 0 \right)}}} \right)}}
\end{equation}
where ${\hbbeta ^{( k )}}$, $k=0,\ldots, K$ are the initial single-source estimators from \eqref{eq:initial-est}, and $\widehat{\bSigma}{} ^{( 0 )}$ is an empirical estimator of ${\bSigma ^{( 0 )}}$ with the target data, e.g., the sample covariance matrix of $\bX^{( 0 )}$.

Finally, we run the proposed transfer learning procedure in \Cref{sec:oracle-estimation} with $\widehat\calA$ to obtain our Trans-Lasso
quantile regression estimator, denoted as $\tbbeta$, to be distinguished from the oracle estimator $\hbbeta$ defined in~\eqref{eq:oracle-est}.

\section{Theoretical Analysis}\label{sec:theory}

We impose the following assumptions.

\begin{assumption}[Sparsity]\label{assum:A1} 
  For each $k = 0,1,\ldots,K$, let ${S_k} = \{ {0 \le i \le p:\beta _i^{( k )} \ne 0}
  \}$ and ${s_k} = | S_k |$, and assume ${s_k} = O( {n_k^r} )$ for some $0<r<1/3$. 
\end{assumption}

\begin{assumption}[Feature distributions]\label{assum:A2} 
  For each $k = 0,1,\ldots,K$, the rows of $\bX^{( k )}$
  are i.i.d. Gaussian distributed with mean zero and covariance matrix ${{\bSigma}
    ^{( k )}}$. The eigenvalues of each ${{\bSigma} ^{( k )}}$ are bounded away from 0 and $\infty$.
\end{assumption}

  With the potential distributional-shift in features, i.e., ${{\bSigma}^{( k )}}$s are allowed to be different, the following measurments are defined to quantify the maximum difference between ${{\bSigma} ^{( 0 )}}$ and ${{\bSigma} ^{( k )}}, k \in {\cal A}$:
  \begin{equation*}
  C_{\bSigma}^{\cal A}= 1 + \max_{1\le j\le p}\max_{k \in {\cal A}}\left\|\boldsymbol{e}_j^\top (\bSigma^{(k)}-\bSigma^{(0)})\left(\sum_{k \in {\cal A} \cup \left\{ 0 \right\}} \alpha _k\bSigma^{(k)}\right)^{-1}\right\|_1,
  \end{equation*}
  where $\boldsymbol{e}_j$ is the a unit vector with 1 on the $j$th position and $\alpha_k = n_k/(n_{\calA} + n_0)$. 

  \begin{assumption}[Sample size and finite sources]\label{assum:A3}
    We assume
    $$\max \left\{ {{s_0}\log p/\left(\underline{n} + n_0\right),{C_{\bSigma}^{\cal A}}d{{\left( {\log p/{n_0}} \right)}^{1/2}}} \right\} = o\left( 1 \right).
    $$
    Besides, we assume $K = O( {{n_0}} )$ and $| {\cal A}|$ is bounded by a constant.
\end{assumption}

\begin{assumption}[Irrepresentable condition]\label{assum:A4} 
  For each $k = 0,\ldots, K$, $${\left\| {{\bSigma}^{(k)}_{{S_k^c}
  \times S_k}} ({\bSigma}^{(k)}_{S_k \times S_k})^{-1}\right\|_\infty } \le 1 -
  a_k$$ for some $0 < a_k  < 1$.  
\end{assumption}

\begin{assumption}[Error distributions]\label{assum:A5} For each $k = 0, \ldots,
 K$, the density function of the noise, ${{f^{( k )}}( \cdot )}$, is bounded
 and Lipschitz continuous in a neighborhood around 0, i.e., $\mathcal{B}(0,l) = \{x \in \mathbb{R}; |x| < l\}$ with the radius $l$ being a positive constant. Moreover, we assume ${{f^{( k )}}( 0 )}$ is
 bounded away from $0$ for every $k$. 
\end{assumption}


\begin{assumption}[Smoothing kernel]\label{assum:A6} 
  The kernel function $G(\cdot)$ is set to be integrable with $\int_{ - \infty
  }^{ + \infty } {G( u )du = 1} $. Meanwhile, $G( u ) = 0$
  if $| u | \ge 1$. Also, $G(\cdot)$ is differentiable and $G'(\cdot)$ is bounded. To determine the bandwidth, we take $h^{( k )} \asymp \sqrt {{s_k}\log {n_k}/{n_k}} $, $h_{\cal I}^{( 0 )} = h_{{{\cal I}^c}}^{( 0 )} \asymp \sqrt {{s_0}\log ( {{n_0}} )/( {n_0} )} $.
\end{assumption}

\Cref{assum:A1} allows the sparsity level $s_k$ to grow at the rate  $n_k^
 {1/3}$ for each model, where $1/3$ is determined to ensure that the lasso estimator based on the surrogate response
 from each source is at least as good as the
 initial estimator from the single-source modeling in terms of their convergence rates. 
\Cref{assum:A2} is on the distributions of the features. The boundedness of
 eigenvalues of $\bSigma^{(k)}$ is commonly adopted in high-dimensional
 problems; examples can be found in, e.g., \citet{cai2017} and \citet
 {chen2020distributed}. Accordingly, we impose measures $C_{\bSigma}^{\cal A}$
 to quantify the potential distributional shift between the sources and the
 target. Under \Cref{assum:A2}, $C_{\bSigma}^
 {\cal A}$ can be further bounded, as shown in \citet{li2020transfer} and \citet
 {tian2022transfer}. For example, for each target and source, the design matrix
 $\bX^{( k )}$ can be generated from a zero-mean Gaussian distribution with
 (i) a block diagonal covariance matrix ${{{\bSigma} ^{( k )}}}$ with constant
 block sizes, (ii) a Toeplitz covariance matrix as in our simulation study for
 the heterogeneous design, and (iii) a covariance matrix with autoregressive
 structure ${\bSigma} _{i,j}^{( k )} = {a^{| {i - j} |}}$ with $a < 1$. Under
 these models, $C_{\bSigma}^\mathcal{A}$ can be bounded by a constant, and
 consequently, Assumption 3 amounts to require (a) the true model is
 sufficiently sparse, i.e., $s_0\log{p}/(\underline{n}+n_0)=o
 (1)$, and (b) the discrepancy in regression coefficients between the target
 and the sources is not too large, i.e., $d(\log p/n_0)^{1/2}=o(1)$.
\Cref{assum:A4} is the irrepresentable condition commonly assumed for achieving sparsity recovery with $\ell_1$ regularization in high-dimensional statistics; see, e.g., \citet{zhao2006model}, \citet{wainwright2009}, and \citet{Buhlmann2011}. In our problem, it is required to control the error of estimating the error density functions by giving sparsity recovery property to the initial estimators in Step 1~\citep{chen2020distributed}, which are used in the construction of the surrogate responses; see~\Cref{sec:proofs} for more details.
\Cref{assum:A5} is a regularity condition on the smoothness of the error density functions near the origin. Commonly used distributions, including the double-exponential and various stable distributions like Cauchy distribution, all meet this assumption. \Cref{assum:A6} is a standard condition on the kernel function
$G(\cdot)$ for approximating the Dirac delta function, which is the derivative of the indicator function. An example of $G(\cdot)$ could be: 
\begin{equation}
\label{eq:smooth-kernel}
G(u)=
\begin{cases}
0 & \text{if $~\left| u\right|\ge 1$};\\
- \frac{{315}}{{64}}{u^6} + \frac{{735}}{{64}}{u^4} - \frac{{525}}{{64}}{u^2} + \frac{{105}}{{64}} & \text{if $~\left| u\right|< 1$}.
\end{cases}
\end{equation}

Under these conditions, \Cref{thm:oracle} gives the error
rates for the estimation and prediction with the oracle Trans-Lasso QR
estimator in Section \ref{sec:oracle-estimation}. 

\begin{theorem}[Convergence rate of the oracle estimator]
  \label{thm:oracle}
  \sloppy Suppose Assumptions \ref{assum:A1}--\ref{assum:A6} hold true. We take ${\lambda_1} \asymp \sqrt {\log ( {p \vee {\overline{n}_{\calA}}} )/( {{n_{\cal A}} + {n_0}} )} $ and $\lambda_0 \asymp \lambda _2 \asymp \sqrt {\log ( {p \vee {n_0}} )/{n_0}} $, then with
  \begin{equation*}
  \xi_1 \left( {n_0,n_{\calA},{\overline{n}_{\calA}} ,s_0,p} \right) =  {\frac{{{s_0}\log \left( {p \vee {\overline{n}_{\calA}}} \right)}}{n_{\calA} + n_0} + \frac{s_0\log \left( {p \vee n_0} \right)}{n_0} \wedge C_{\bSigma} ^{\calA}d\sqrt {\frac{{\log \left( {p \vee {n_0}} \right)}}{n_0}}  \wedge {{\left( C_{\bSigma} ^{\calA}d \right)}^2}},
  \end{equation*}
  we have
  \begin{align*}
  \phantom{=\,}&\P\left(\frac{1}{{{n_0}}}\left\|{{\bX^{(0)}}( {\widehat \bbeta  - {\bbeta ^{\left( 0 \right)}}} )}\right\|^2 \vee \left\|\hbbeta  - \bbeta^{(0)}\right\|^2 \lesssim \xi_1 \left( {n_0,n_{\calA},{\overline{n}_{\calA}} ,s_0,p} \right) \right)\\
  \ge \, & 1-C_1\exp\left(-C_2 {\underline{n}_{\calA}}\right) - C_3\left({\underline{n}_{\calA}}\right)^{-C_4 {\underline{s}_{\calA}}} - \exp\left(-C_5\log p\right),
  \end{align*}
  for some positive constants $C_1$,\ldots, $C_5$, where  ${\underline{s}_{\calA} } = {\min _{k \in {\calA \cup \{ 0 \}}}}{s_k}$.
\end{theorem}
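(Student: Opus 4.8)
The plan is to propagate the estimation error through the four steps of the oracle procedure, eventually writing $\widehat\bbeta-\bbeta^{(0)}$ as the error of an offset Lasso centered at the fusion estimator $\hbbeta^{\calA}$ with population target $\bbeta^{(0)}$, and splitting the bound into a fusion contribution (the first term of $\xi_1$) and a debiasing contribution (the three-way minimum in $\xi_1$). The population anchor for Step~3 is $\bbeta^{\calA}=\big(\sum_{k\in\calA\cup\{0\}}\alpha_k\bSigma^{(k)}\big)^{-1}\big(\sum_{k\in\calA\cup\{0\}}\alpha_k\bSigma^{(k)}\bbeta^{(k)}\big)$ with $\alpha_k=n_k/(n_{\calA}+n_0)$, the minimizer of the pooled surrogate least-squares risk; a preliminary linear-algebra lemma gives $\|\bbeta^{\calA}-\bbeta^{(0)}\|_1\lesssim C_{\bSigma}^{\calA}d$ from the definition of $C_{\bSigma}^{\calA}$ and $\delta^{(k)}\le d$ for $k\in\calA$.

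Step~1 (initial estimators and error densities). I would first invoke the analysis of $\ell_1$-penalized quantile regression under \Cref{assum:A1}, \Cref{assum:A2}, \Cref{assum:A4} and \Cref{assum:A5} (see \citet{fan2014adaptive}): with $\lambda_0\asymp\sqrt{\log(p\vee n_k)/n_k}$, each $\hbbeta^{(k)}$ satisfies $\|\hbbeta^{(k)}-\bbeta^{(k)}\|^2\lesssim s_k\log(p\vee n_k)/n_k$ and $\|\hbbeta^{(k)}-\bbeta^{(k)}\|_1\lesssim s_k\sqrt{\log(p\vee n_k)/n_k}$, and, crucially using the irrepresentable condition, exact support recovery $\widehat S_k=S_k$. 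Then, using \Cref{assum:A6}, the bandwidth $h^{(k)}\asymp\sqrt{s_k\log n_k/n_k}$, and the local smoothness in \Cref{assum:A5}, I would show $|\widehat f^{(k)}(0)-f^{(k)}(0)|=o_P(1)$ by separating the kernel-smoothing bias, the stochastic deviation of the estimate built from the true residuals, and the plug-in error from replacing $\bbeta^{(k)}$ by $\hbbeta^{(k)}$, the latter controlled by the $\ell_1$- and support-recovery bounds; this re-derives the surrogate-response analysis of \citet{chen2020distributed} and is where the $C_3(\underline{n}_{\calA})^{-C_4\underline{s}_{\calA}}$ tail in the probability bound originates.

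Step~2 (surrogate responses as near-linear models). The structural fact is that the population surrogate $\widetilde y^{(k),*}=\bx^{(k)\top}\bbeta^{(k)}-\{f^{(k)}(0)\}^{-1}\{I(\eps^{(k)}\le 0)-\tau\}$ satisfies $\E[\widetilde y^{(k),*}\mid\bx^{(k)}]=\bx^{(k)\top}\bbeta^{(k)}$, so $\bbeta^{(k)}$ is exactly the best linear predictor of the surrogate with bounded, conditionally centered noise. I would bound the surrogate deviation $\tby^{(k)}-\widetilde y^{(k),*}$, in the prediction and $\ell_\infty$-projected norms needed for Steps~3--4, by propagating the Step~1 errors: the linear-term perturbation $\bX^{(k)}(\hbbeta^{(k)}-\bbeta^{(k)})$ enters at rate $s_k\log(p\vee n_k)/n_k$, the density plug-in error enters through $|\widehat f^{(k)}(0)-f^{(k)}(0)|$ times an $O(1)$ factor, and the indicator perturbation $I(y_i^{(k)}\le\bx_i^{(k)\top}\hbbeta^{(k)})-I(y_i^{(k)}\le\bx_i^{(k)\top}\bbeta^{(k)})$ is handled by a localized empirical-process bound with conditional mean $\approx f^{(k)}(0)\,\bx_i^{(k)\top}(\hbbeta^{(k)}-\bbeta^{(k)})$ under \Cref{assum:A5}. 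I expect this step to be the main obstacle, since pushing the non-smooth quantile loss through to a controlled surrogate linear regression is what is genuinely new relative to the smooth-loss treatments of \citet{li2020transfer} and \citet{tian2022transfer}; \Cref{assum:A6} and the stated bandwidths are calibrated precisely so that these contributions do not dominate the target rate.

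Steps~3--4 (fusion and debiasing via Lasso). With the surrogate errors controlled, Step~3 is a pooled $\ell_1$-penalized least-squares problem whose stacked Gaussian design obeys a compatibility condition under \Cref{assum:A2} on an event of probability $1-C_1\exp(-C_2\underline{n}_{\calA})$; with $\lambda_1\asymp\sqrt{\log(p\vee\overline{n}_{\calA})/(n_{\calA}+n_0)}$, a standard Lasso oracle inequality gives $\|\hbbeta^{\calA}-\bbeta^{\calA}\|^2\vee\frac{1}{n_{\calA}+n_0}\|\bX_{\mathrm{pool}}(\hbbeta^{\calA}-\bbeta^{\calA})\|^2\lesssim\frac{s_0\log(p\vee\overline{n}_{\calA})}{n_{\calA}+n_0}$ up to the bias from $\bbeta^{\calA}$ being only near- (not exactly) $s_0$-sparse, which, being controlled by $C_{\bSigma}^{\calA}d$ (so $o(1)$ under \Cref{assum:A3}), is absorbed into the Step~4 contribution of $\xi_1$. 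For Step~4, the substitution $\boldsymbol{\gamma}=\hbbeta^{\calA}-\bdelta$ shows $\widehat\bbeta=\argmin_{\boldsymbol{\gamma}}\frac{1}{2n_0}\|\tby^{(0)}-\bX^{(0)}\boldsymbol{\gamma}\|^2+\lambda_2\|\boldsymbol{\gamma}-\hbbeta^{\calA}\|_1$, an offset Lasso fitting the target surrogate data with target $\bbeta^{(0)}$; the residual at $\bbeta^{(0)}$ is the target surrogate noise, with $\big\|\tfrac{1}{n_0}\bX^{(0)\top}(\tby^{(0)}-\bX^{(0)}\bbeta^{(0)})\big\|_\infty\lesssim\sqrt{\log(p\vee n_0)/n_0}$ on an event of probability $1-\exp(-C_5\log p)$, and $\|\hbbeta^{\calA}-\bbeta^{(0)}\|_1\lesssim C_{\bSigma}^{\calA}d+o(1)$. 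The dependence between $\hbbeta^{\calA}$ and $\tby^{(0)}$, both of which use the target data, is absorbed by replacing $\hbbeta^{\calA}$ with the deterministic $\bbeta^{\calA}$ inside the argument and charging the difference to the Step~3 rate. Running the Lasso basic inequality three ways — exploiting the decomposition of $\hbbeta^{\calA}-\bbeta^{(0)}$ into its $S_0$-supported ($s_0$-sparse) part plus an $\ell_1$-small remainder, exploiting its global $\ell_1$ norm $\lesssim C_{\bSigma}^{\calA}d$, and the crude comparison against the initial point $\hbbeta^{\calA}$ — and retaining the best gives $\frac{s_0\log(p\vee n_0)}{n_0}\wedge C_{\bSigma}^{\calA}d\sqrt{\log(p\vee n_0)/n_0}\wedge(C_{\bSigma}^{\calA}d)^2$. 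Combining the Step~3 and Step~4 contributions yields $\xi_1$, and a union bound over the failure events of Steps~1--4 gives the stated probability.
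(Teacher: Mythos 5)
Your proposal is correct and follows essentially the same route as the paper: defining the population fusion target $\bbeta^{\calA}$ with $\|\bbeta^{\calA}-\bbeta^{(0)}\|_1\lesssim C_{\bSigma}^{\calA}d$, reducing the quantile problem to a linear one via the surrogate-response concentration bound of \citet{chen2020distributed} (which the paper imports as its Lemma~\ref{lem-1} rather than re-deriving, with the irrepresentable condition playing exactly the support-recovery role you identify for the density estimation), and then running Lasso basic inequalities with a case analysis for the fusion and debiasing steps to obtain the fusion term and the three-way minimum in $\xi_1$, with the same three tail contributions in the probability bound. The only cosmetic differences are that you sketch re-proving the surrogate concentration where the paper cites it, and you organize the debiasing case split around the sparse/$\ell_1$-small decomposition of $\hbbeta^{\calA}-\bbeta^{(0)}$ while the paper splits on whether $\hbu^{\calA\top}\hbSigma^{(0)}\hbu^{\calA}\le\lambda_2\|\bdelta^{\calA}\|_1$; these yield the same terms.
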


Compared to the target-only estimator whose rate of squared $\ell_2$ error bound on $\bbeta$ is
${s_0}\log ({p \vee n_0})/ {n_0}$
\citep{fan2014adaptive}, the oracle transfer learning estimator has a
sharper rate when ${C_{\bSigma}^{\cal A}}d \le \sqrt {{s_0}\log ( {p \vee {n_0}} )/{n_0}}$ and
${n_\mathcal{A}} \gg{n_0}$. 
In the worst-case scenario that there is no informative source, i.e. $\mathcal{A} = \emptyset $, the oracle estimator has a
convergence rate ${{s_0}\log ( {p \vee {n_0}} )/{n_0}}$, which is no
worse than that of the target-only estimator. Theorem~\ref{thm:oracle} also reveals that the smaller the heterogeneity across the informative sources and the target, as measured by $C_{\bSigma}^\mathcal{A}$, the better the model performance.

Now we consider the informative source detection problem. We define ${{\bbeta ^{( {0,k} )}}}$ as the underlying fusion coefficient vector between the target and the source $k$ arising from Step 1 of the proposed source detection procedure, 
  $${\bbeta ^{\left( {0,k} \right)}} = {\left( {\frac{{n_0}/2}{{{n_0}/2 + {n_k}}}{\bSigma ^{\left( 0 \right)}} + \frac{{{n_k}}}{{{n_0}/2 + {n_k}}}{\bSigma ^{\left( k \right)}}} \right)^{ - 1}}\left( {\frac{{{n_0}/2}}{{{n_0}/2 + {n_k}}}{\bSigma ^{\left( 0 \right)}}{\bbeta ^{\left( 0 \right)}} + \frac{{{n_k}}}{{{n_0}/2 + {n_k}}}{\bSigma ^{\left( k \right)}}{\bbeta ^{\left( k \right)}}} \right).$$

\begin{assumption}[Weak sparsity condition]\label{assum:A7} 
   With $s'$ such that ${\| {{\bbeta ^{( k )}}} \|_0} \le s'$ for all $k \in {{\cal A}^c}$, there exists a set ${S'_k} \subset \{ {1, \ldots ,p} \}$ such that $| {{S'_k}} | \le s'$ and ${\| {\bbeta _{{{S'_k}^c}}^{( {0,k} )}}\|_1} \le d'$ for any $k \in {{\cal A}^c}$ with $d' = o( 1 )$. 
\end{assumption}

\begin{assumption}[Identifiability of ${\cal A}$]\label{assum:A8} 
  Denote ${s^ * } = {s_0} \vee s'$ and ${d^ * } = C_{\bSigma} ^\calA d \vee d'$. Denote ${\lambda _{\min }}$ as the minimum eigenvalue of the covariance matrix ${\bSigma ^{( 0 )}}$ and
  \begin{equation*}
    {\xi _2}
    = \left( {\frac{{{{\left( {{s^ * }} \right)}^{3/2}}\log \left( {p \vee {n_0} \vee \underline{n}} \right)}}{{{n_0} + \underline{n}}} + {d^ * }\sqrt {\frac{{{s^ * }\log \left( {p \vee {n_0} \vee \underline{n}} \right)}}{{{n_0} + \underline{n}}}} } \right) \vee {s^ * }\sqrt {\frac{{\log \left( {p \vee {n_0}} \right)}}{{{n_0}}}} .
  \end{equation*}
For any $k \in {{\cal A}^c}$, there exists a positive constant $c_\varepsilon$ such that
  \begin{equation*}
  \left\| {{\bbeta ^{\left( k \right)}} - {\bbeta ^{\left( 0 \right)}}} \right\|^2 \ge \lambda _{\min }^{ - 1}\left\{ {c_\varepsilon {Q_0}\left( {{\bbeta ^{\left( 0 \right)}}} \right)+ 2\xi_2} \right\}.
\end{equation*}
Meanwhile, we require ${d^2} = o( {{Q_0}( {{\bbeta ^{( 0 )}}} )} )$.
\end{assumption}


To achieve consistent informative set detection, Assumption~\ref{assum:A7} 
assumes that the sparsity pattern of ${\bbeta ^{( k )}}, k \in \mathcal{A}^c$ is similar to ${\bbeta ^{( 0 )}}$, which implies that the corresponding fusion coefficient ${{\bbeta ^{( {0,k} )}}}$ remain to be ``weakly'' sparse \citep{tian2022transfer}. Assumption~\ref{assum:A8} 
states that the gap between the target and those non-informative sources, as measured by $\|{{\bbeta ^{( k )}} - {\bbeta ^{( 0 )}}}\|^2$, $k \in {{\cal A}^c}$, should be sufficiently large, and the gap between the target and those informative sources, as measured by $d$, should be relatively small. More specifically, it implies that
\begin{align*}
  \mathop {\inf }\limits_{k \in {\calA^c}} \left\| {{\bbeta ^{\left( k \right)}} - {\bbeta ^{\left( 0 \right)}}} \right\|_1^2 \ge
\,&\mathop {\sup }\limits_{k \in \calA} {\left\| {{\bbeta ^{\left( k \right)}} - {\bbeta ^{\left( 0 \right)}}} \right\|_1^2},
\end{align*}
and that ${c_\varepsilon } \le {\lambda _{\min }}{\| {{\bbeta ^{( k )}} - {\bbeta ^{( 0 )}}} \|^2}/{Q_0}( {{\bbeta ^{( 0 )}}} )$ for any $k \in {\mathcal{A}^c}$, which motivates the estimation of $c_\varepsilon$ in practice given in~\eqref{eq:E0suggest}.

The following theorem shows the consistency in identifying the informative set.

\begin{theorem}[Consistency of $\widehat {\cal A}$]\label{thm:detect} 
Suppose Assumptions~\ref{assum:A1}--\ref{assum:A8} hold. When ${\xi _2} = o( 1 )$,  $\widehat\calA$ obtained in~\eqref{eq:est-of-A} is consistent in identifying $\calA$, such that
  \begin{equation*}
  \P\left({\widehat {\cal A} = {\cal A}} \right) \ge 1-C_1\exp\left(-C_2 \left(\underline{n} \wedge n_0\right)\right) - C_3\left(\underline{n} \wedge n_0\right)^{-C_4 \underline{s}} -2\exp\left(-C_5\log p \right)
  \end{equation*}
  for some positive constants $C_1$ to $C_5$, where $\underline{s} = \min_{k = 0}^{K} {s_k}$.
\end{theorem}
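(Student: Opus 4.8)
The plan is to exhibit a single high-probability event on which the selection rule~\eqref{eq:est-of-A} returns exactly $\calA$, by verifying separately that every $k\in\calA$ meets the criterion and that every $k\in\calA^c$ fails it. The common tool is a comparison lemma relating the test-fold loss $\widehat Q_0(\cdot)$ in~\eqref{eq:emprical-loss} to the population loss $Q_0(\cdot)$ in~\eqref{eq:Q0}. Since $\eps^{(0)}$ is independent of $\bx^{(0)}$ with $\E\{I(\eps^{(0)}\le 0)-\tau\}=0$, the ideal surrogate $\bx^{(0)\top}\bbeta^{(0)}-\{f^{(0)}(0)\}^{-1}\{I(\eps^{(0)}\le 0)-\tau\}$ has conditional mean $\bx^{(0)\top}\bbeta^{(0)}$, so $Q_0(\bbeta)=Q_0(\bbeta^{(0)})+\norm{\bbeta-\bbeta^{(0)}}_{\bSigma^{(0)}}^2$ with $Q_0(\bbeta^{(0)})=\{f^{(0)}(0)\}^{-2}\tau(1-\tau)$ bounded away from $0$ and $\infty$ by \Cref{assum:A5} and \Cref{assum:A2}. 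Sample splitting is what makes this tractable: $\hbbeta_{\calI}^{(0)}$ and the Steps~1--3 estimator $\hbbeta^{(0,k)}$ depend only on $\calI$ and the source data, hence are independent of the test fold $\calI^c$, and the only ingredient of $\widehat Q_0$ that touches $\calI^c$ is the plug-in surrogate $\widetilde y_i^{(0)}$ built from $\hbbeta_{\calI^c}^{(0)}$ and $\widehat f^{(0)}(0)$.

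For the comparison lemma I would, for any vector $\bbeta$ that is fixed given $(\calI,\text{sources})$, decompose $\widetilde y_i^{(0)}-\bx_i^{(0)\top}\bbeta=u_i-\bx_i^{(0)\top}(\bbeta-\bbeta^{(0)})+v_i$, where $u_i=-\{f^{(0)}(0)\}^{-1}\{I(\eps_i^{(0)}\le 0)-\tau\}$ is the oracle surrogate noise and $v_i=\widetilde y_i^{(0)}-\bx_i^{(0)\top}\bbeta^{(0)}-u_i$ collects the error of plugging $\hbbeta_{\calI^c}^{(0)}$ and $\widehat f^{(0)}(0)$ in for $\bbeta^{(0)}$ and $f^{(0)}(0)$. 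Expanding the square in $\widehat Q_0(\bbeta)=\tfrac{2}{n_0}\sum_{i\in\calI^c}(\cdot)^2$: the diagonal terms $\tfrac2{n_0}\sum u_i^2$ and $\tfrac2{n_0}\sum(\bx_i^{(0)\top}(\bbeta-\bbeta^{(0)}))^2$ concentrate on $Q_0(\bbeta^{(0)})$ and $\norm{\bbeta-\bbeta^{(0)}}_{\bSigma^{(0)}}^2$ by Gaussian quadratic-form concentration; the cross term $\tfrac2{n_0}\sum u_i\bx_i^{(0)\top}(\bbeta-\bbeta^{(0)})$ is mean zero and small uniformly over sparse balls; and the $v_i$-terms are bounded using the Lipschitz smoothness of $f^{(0)}$ (\Cref{assum:A5}), the kernel conditions (\Cref{assum:A6}), the target-only rate $\norm{\hbbeta_{\calI^c}^{(0)}-\bbeta^{(0)}}^2\lesssim s_0\log(p\vee n_0)/n_0$ with support recovery (\Cref{assum:A4}), and the density-estimation bound on $\widehat f^{(0)}(0)-f^{(0)}(0)$, in the manner of the surrogate-loss analysis in \citet{chen2020distributed}. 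The upshot is that on an event of probability at least $1-C\exp(-c(\underline{n}\wedge n_0))-C(\underline{n}\wedge n_0)^{-c\underline{s}}-C\exp(-c\log p)$ one has, uniformly over sparse $\bbeta$ in a fixed ball, $|\widehat Q_0(\bbeta)-Q_0(\bbeta)|\le\eta_n\bigl(1+\norm{\bbeta-\bbeta^{(0)}}^2\bigr)$ for a deterministic $\eta_n=o(1)$ dominated by the quantities appearing in $\xi_2$. I expect this uniform control of the surrogate loss — in particular decoupling the plug-in error $v_i$ from the test fold and making the bound uniform over $\bbeta$ and over the $K=O(n_0)$ candidate estimators simultaneously — to be the main obstacle of the proof.

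\emph{Inclusion of $\calA$.} For $k\in\calA$ the estimator $\hbbeta^{(0,k)}$ is the fusion Lasso on the target training data and source $k$, whose deterministic target $\bbeta^{(0,k)}$ satisfies $\norm{\bbeta^{(0,k)}-\bbeta^{(0)}}_1\lesssim C_{\bSigma}^{\{k\}}d$ by its closed form and $\norm{\bbeta^{(k)}-\bbeta^{(0)}}_1\le d$; hence $\bbeta^{(0,k)}$ is weakly sparse and the fusion-step analysis behind \Cref{thm:oracle} gives $\norm{\hbbeta^{(0,k)}-\bbeta^{(0)}}^2\lesssim\xi_1=o(1)$ (using also $d^2=o(Q_0(\bbeta^{(0)}))$ from \Cref{assum:A8}). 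Combining with $\norm{\hbbeta_{\calI}^{(0)}-\bbeta^{(0)}}^2\lesssim s_0\log(p\vee n_0)/n_0=o(1)$ and the comparison lemma, $\widehat Q_0(\hbbeta^{(0,k)})\le Q_0(\bbeta^{(0)})+o(1)$ while $(1+\varepsilon_0)\widehat Q_0(\hbbeta_{\calI}^{(0)})\ge Q_0(\bbeta^{(0)})+\varepsilon_0 Q_0(\bbeta^{(0)})-o(1)$; since $\varepsilon_0 Q_0(\bbeta^{(0)})$ is a fixed positive constant, the criterion holds and $k\in\widehat\calA$. \emph{Exclusion of $\calA^c$.} For $k\in\calA^c$, \Cref{assum:A7} makes $\bbeta^{(0,k)}$ weakly sparse, so a Lasso oracle inequality under approximate sparsity — using restricted eigenvalues for the pooled Gaussian design and again the surrogate-response bounds of \citet{chen2020distributed} — yields $\norm{\hbbeta^{(0,k)}-\bbeta^{(0,k)}}^2\lesssim\xi_2$. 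Hence $\widehat Q_0(\hbbeta^{(0,k)})\ge Q_0(\bbeta^{(0)})+\norm{\bbeta^{(0,k)}-\bbeta^{(0)}}_{\bSigma^{(0)}}^2-O(\xi_2)-o(1)$, and from the closed form $\bbeta^{(0,k)}-\bbeta^{(0)}=(\alpha_0\bSigma^{(0)}+\alpha_k\bSigma^{(k)})^{-1}\alpha_k\bSigma^{(k)}(\bbeta^{(k)}-\bbeta^{(0)})$ with bounded eigenvalues (\Cref{assum:A2}), $\norm{\bbeta^{(0,k)}-\bbeta^{(0)}}_{\bSigma^{(0)}}^2$ is lower bounded by a multiple of $\norm{\bbeta^{(k)}-\bbeta^{(0)}}^2$; \Cref{assum:A8}, whose slack $2\xi_2$ is calibrated precisely to absorb the $O(\xi_2)$ estimation error, then forces $\widehat Q_0(\hbbeta^{(0,k)})>(1+\varepsilon_0)\widehat Q_0(\hbbeta_{\calI}^{(0)})$ (here one uses $\varepsilon_0\le c_\varepsilon$ and $\widehat Q_0(\hbbeta_{\calI}^{(0)})\le Q_0(\bbeta^{(0)})+o(1)$), so $k\notin\widehat\calA$.

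Finally, intersecting the comparison-lemma event with those from \Cref{thm:oracle} (for the bounded collection $k\in\calA$) and from the fusion-Lasso oracle inequalities for $k\in\calA^c$, and taking a union bound over the at most $K=O(n_0)$ sources — whose polynomial prefactor is absorbed into the exponents by enlarging $C_2,C_4$ — yields $\P(\widehat\calA=\calA)\ge 1-C_1\exp(-C_2(\underline{n}\wedge n_0))-C_3(\underline{n}\wedge n_0)^{-C_4\underline{s}}-2\exp(-C_5\log p)$, as claimed.
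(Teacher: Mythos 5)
Your proposal is correct and follows essentially the same route as the paper: sample-splitting independence of the test fold, concentration of the empirical surrogate loss $\widehat Q_0$ around $Q_0$ (the paper's Lemmas~\ref{lem-5} and~\ref{lem-6}, which you package as a single uniform comparison bound), the quadratic expansion of $Q_0$ around $\bbeta^{(0)}$ (the paper's Lemma~\ref{lem-4}), the oracle rates for $\hbbeta^{(0,k)}$ inherited from the proof of Theorem~\ref{thm:oracle} together with Assumption~\ref{assum:A7} for $k\in\calA^c$, and finally the separation in Assumption~\ref{assum:A8} with a union bound over the $K=O(n_0)$ sources. The only differences are organizational, not substantive.
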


\Cref{thm:detect} guarantees that the Trans-Lasso QR estimator enjoys the same
 convergence rate as the oracle Trans-Lasso QR estimator shown in \Cref
 {thm:oracle}. All the proofs are provided in~\Cref{sec:proofs}.

\section{Simulation Studies}\label{sec:simulation}

We conduct simulation studies to compare the following estimators:
\begin{itemize}
  \item Lasso QR, i.e. $\ell_1$-penalized quantile regression with only
    target sample;
  \item Naive Trans-Lasso QR, which naively treats all the sources as informative without any detection procedure;
  \item Oracle Trans-Lasso QR, which assumes known informative set;  
  \item Pseudo Trans-Lasso QR, which performs informative set detection with known number of informative sources $m$;  
  \item Trans-Lasso QR, which is the proposed methods that performs informative set detection without any prior knowledge; 
  \item Oracle Trans-Lasso Pooled QR assuming known informative set, presented in Section 2 of \citet{huang2022transfer}.
  \item Trans-Lasso Pooled QR with informative set detection, presented in Sections 4 and 5 of \citet{huang2022transfer}.
\end{itemize}
In the Pseudo Trans-Lasso QR, with known $m$, the informative set is estimated as
\begin{equation}
  \widehat {\cal A} = \left\{ {k \ne 0:{{\widehat Q}_0}\left( {{\hbbeta ^{\left( 0,k \right)}}} \right) \le {{\widehat Q}_0}\left( {{\hbbeta ^{\left( 0,k \right)}}} \right)_{\left( m \right)}} \right\},
\end{equation}
where ${{\widehat Q}_0}( {{\hbbeta^{( 0,k )}}} )_{( m )}$ represents the $m$th order statistic among all the
empirical loss ${{\widehat Q}_0}( {{\hbbeta^{( 0,k )}}})$ calculated by Step 2 in the informative set detection. In the Trans-Lasso QR, for simplicity, we set ${\varepsilon_0} = 0.01$ in \eqref{eq:est-of-A} for the informative set detection, which falls into $\left( {0,{{\widehat c}_\varepsilon }} \right]$ with ${{{\widehat c}_\varepsilon }}$ calculated by \eqref{eq:E0suggest} in all of our settings. 



\subsection{Homogeneous Design}\label{sim:homo}

We take $p\in \{75, 150, 225\}$, $n_0 = 150$ and ${n_1} = {n_2} =  \cdots  =
{n_K} = 150$ with $K=20$. The covariate vectors $\bx^{( k )} \in \mathbb
{R}^p$ are i.i.d. from the Gaussian distribution with mean zero and covariance
${\bSigma} =[{0.5^{| {i - j} |}}]_{p\times p}$, for all $k$ from $0$ to $K$.
Note that $\bSigma$ is assumed to be the same across the target and all the
sources, which we refer to as the ``homogeneous design'' scenario.

For the target, the coefficient vector $\bbeta ^{( 0 )}$ is set as $\beta _j^{
( 0 )} = 1$ for $j \in \{ {1,2, \ldots,{s_0}} \}$ with $s_0=20$, and $\beta _j^
{( 0 )} = 0$ otherwise. Similar to \citet{li2020transfer}, the coefficient
vectors ${\bbeta ^{( k )}}$ for the sources are set as follows. Let ${H_k}$ be
a random subset of $\{1,\ldots,p\}$ with size $| {{H_k}} | = p/2$.
\begin{itemize}
  \item If $k \in {\cal A}$, let
  \begin{equation*}
  \beta _j^{\left( k \right)} = \beta _j^{\left( 0 \right)} + {\xi _j}I\left( {j \in {H_k}} \right),
  \end{equation*}
  where ${\xi _j}\mathop  \sim \limits^{i.i.d.} Laplace( {0,2d/p} )$.
  We have that  $\E{\|{\bbeta ^{( k )}} -
  {\bbeta ^{( 0 )}}\|_1} = \sum\nolimits_{j \in {H_k}} {\E|
  {{\xi _j}} |}  = d$. 
  \item If $k \notin {\cal A}$, let
  \begin{equation*}
  \beta _j^{\left( k \right)} = \beta _j^{\left( 0 \right)} + {\xi _j}I\left( {j \in {H_k}} \right),
  \end{equation*}
  where ${\xi _j}\mathop  \sim \limits^{i.i.d.} Laplace( {0,140/p})$.
  We have that $\E{\|{\bbeta ^{( k )}} - {\bbeta ^{( 0 )}}\|_1} = 70$.
\end{itemize}
We consider different levels of informativeness with $d \in \{ {2,20,60} \}$ and different numbers of informative sources $| {\cal A} | \in \{{0,1,2,4,6,8,10,12,14,16,18,20}\}$. 


We set the quantile level to be the same for the target and the sources as $\tau = 0.8$. For the error $\eps_i^{( k
 )}$,  we consider the following two cases:
\begin{itemize}
  \item Case 1: $\eps_i^{( k )} \sim N( {{\Phi ^{ -
  1}}( {0.2} ),{\bbeta ^{( k )\top}}\bSigma {\bbeta ^{(
  k )}}/\eta } )$ with \bruce{the signal-to-noise ratio (SNR) $\eta$ selected from} $\eta  \in \{
  {20,10,5} \}$, where ${{\Phi ^{ - 1}}}$ is the quantile function of
  normal distribution with mean $0$ and variance ${\bbeta ^{( k )\top}}\bSigma
  {\bbeta ^{( k )}}/\eta$.
  \item Case 2: $\eps_i^{( k )} \sim Cauchy( {{\Omega ^{
  - 1}}( {0.2} ),{\bbeta ^{( k )\top}}\bSigma {\bbeta
  ^{( k )}}/\eta } )$ with $\eta  \in \{ {20,10,5} \}$, where ${{\Omega ^{ - 1}}}$ is the quantile
  function of Cauchy distribution with mean $0$ and scale parameter ${\bbeta ^{( k )\top}}\bSigma {\bbeta ^{( k )}}/\eta$.
\end{itemize}

The data are then generated by the target and source models in \eqref{eq:target-model} and \eqref{eq:source-model}. Each setting is replicated by 100 times. 


To successfully implement the transfer learning methods, appropriate
smoothing kernel and hyper-parameters are desired. For the choice of the
smoothing kernel in the response surrogation step,  we take the kernel $G$ from \eqref{eq:smooth-kernel} and use 10-fold cross validation to select the bandwidth $h$, by minimizing the quantile loss of the lasso estimator with surrogate response. 
In the 
single-source modeling step, we use 10-fold cross validation to tune $\lambda_0$. For the parameters in the fusion learning step and debiasing step, ${\lambda _1}$ is selected by 10-fold cross validation. Following \citet{li2020transfer}, with the selected $\lambda_1$, we then set ${\lambda _2} = {\lambda _1}\sqrt {( {{n_{\cal A}} + {n_0}} )/ {n_0}}$ as guided by the results in Theorem~\ref{thm:oracle}.

To evaluate the performance of different methods, we compute the mean squared errors (MSE) on estimating the target coefficient vector $\bbeta^{(0)}$, i.e., $\|{\hbbeta^{( 0 )}} - {\bbeta ^{( 0 )}}\|^2$, 
and the out-of-sample quantile loss (QL), i.e., $\sum\nolimits_{i = 1}^{n_t} {{\rho _\tau }} ( {y_i^{( t )} - \bx_i^{( t )\top}{\hbbeta^{( 0 )}}} )/{n_t}$, where $( {y_i^{( t )},\bx_i^{( t )}})_{i = 1}^{{n_t}}$ is an independently generated testing data of size $n_t = 150$ from the target model. We report 10\% trimmed mean and standard deviation of the quantile loss values from 100 repetitions, to mitigate the impact of extreme outliers in the Cauchy error setting. In contrast, the mean and standard deviation of the MSE values are reported without trimming.

\begin{figure}[tbp]
\centering
\includegraphics[width=1.0\textwidth]{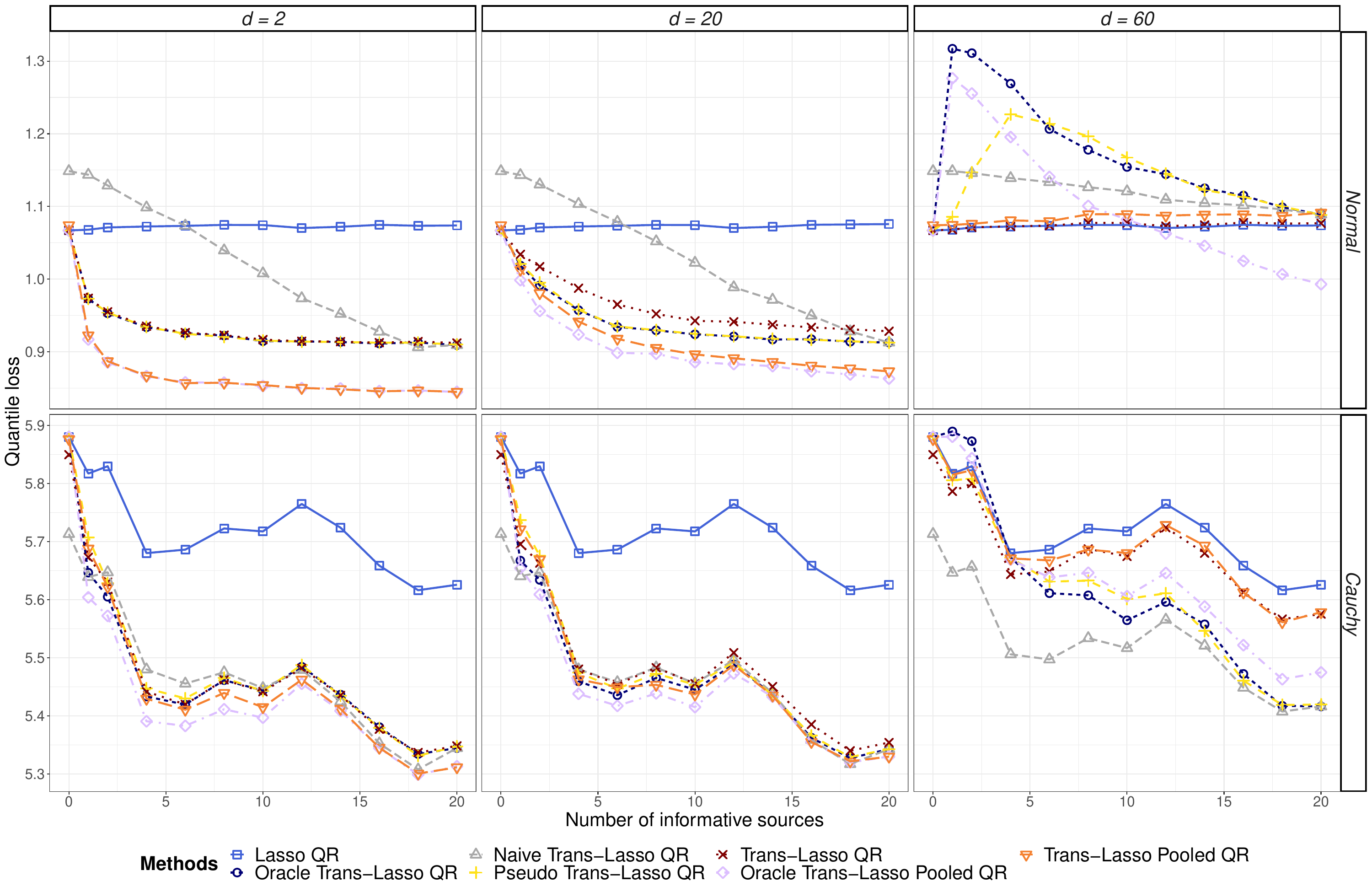}
\caption{Average quantile loss among different $d$ with $\eta=20$ in homogeneous setting for normal and Cauchy error.}
\label{fig:homo-loss}
\end{figure}

\begin{figure}[tbp]
\centering
\includegraphics[width=1.0\textwidth]{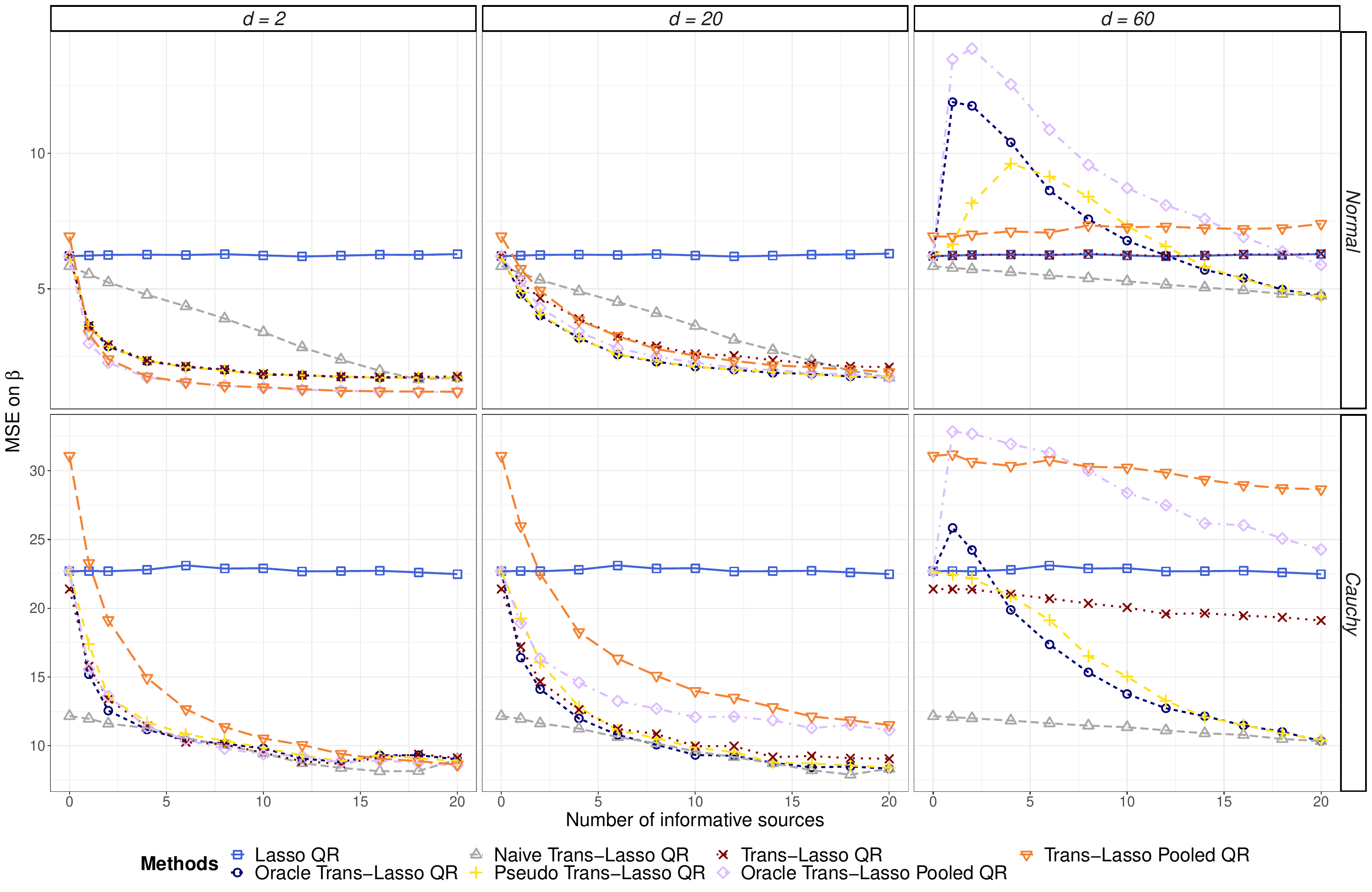}
\caption{Average MSE on $\bbeta$ among different $d$ with $\eta=20$ in homogeneous setting for normal and Cauchy error.} 
\label{fig:homo-beta}
\end{figure}

Figures~\ref{fig:homo-loss}-\ref{fig:homo-beta} plot the QL values and
the MSE values, respectively, for the cases with $p=150$ and $\eta = 20$,
averaged over replications. Detailed results are also shown in Tables~\ref{tab:ci-normal-mse}-\ref
{tab:ci-cauchy-ql} in the Appendix.

When $d=2$ or $d=20$, the informative sources are truly informative, as seen by the fact that all the transfer learning methods could greatly outperform Lasso QR, except for the naive approach. In these cases, the performance of both Trans-Lasso QR and Trans-Lasso Pooled QR closely aligns with their corresponding oracle procedures, affirming the efficacy of the informative source detection process. Interestingly, the pseudo Trans-Lasso QR, informed a priori about $|{\cal A} |$, performs comparably to the oracle. Conversely, the naive Trans-Lasso QR, which indiscriminately includes all sources, may underperform relative to Lasso QR when $| {\cal A} |$ is small.

When $d = 60$, the informativeness of the sources diminishes, particularly in the normal residual setting, which is inherently simpler than the Cauchy residual setting. This is reflected in the performance of the oracle Trans-Lasso QR and oracle Trans-Lasso Pooled QR: under normal settings, they significantly underperform compared to Lasso QR with a small $| {\cal A} |$, but their performance incrementally improves as $| {\cal A} |$ increases, primarily due to the larger sample size of weakly informative samples. The pseudo Trans-Lasso QR exhibits similar behavior to the oracle, as it is constrained to use sources that are not genuinely informative. It is then interesting to see that the proposed Trans-Lasso QR method is very robust and performs as well as Lasso QR. The results show that the proposed informative source selection procedure works well; the Trans-Lasso QR is able to utilize the truly information sources to achieve positive transfer while not being misled by the ``fake'' informative sources that could cause negative transfer.

  It is interesting to further compare Trans-Lasso QR and Trans-Lasso Pooled QR. In terms of the quantile loss, Trans-Lasso Pooled QR shows better performance than Trans-Lasso QR, especially under the normal residual setup with a small $d$ value. This is not surprising as the Trans-Lasso Pooled QR directly targets on the originally quantile loss. However, in terms of the MSE on $\bbeta$, Trans-Lasso Pooled QR can be substantially outperformed by Trans-Lasso QR, especially in the Cauchy residual setup. This may be due to the instability or optimization issues with fitting the pooled sparse quantile regression models. We have also conducted additional simulation studies to compare their computational efficiency with large-scale problems. Specifically, consider the settings of Gaussian residuals, with $d = 2$, $| {\mathcal{A}} | = 10$, $p \in \{75, 150, 300, 600\}$, and $n = {n_0} =  \cdots  = {n_{20}} \in \{1000, 2500, 5000,
  10000\}$. Figure~\ref{fig:time_vs_pn} highlights the superior computational efficiency of Trans-Lasso QR over Trans-Lasso Pooled QR; the efficiency gain is consistent and substantial especially in large-scale settings with large $p$ and/or $n$. More detailed results are reported in Tables~\ref{tab:c1h1ls_mse}-\ref{tab:c1h1ls_time} in the Appendix.

We also observe that, as expected, in general all the methods perform better
under normal error than under Cauchy error, and all the methods perform better
with smaller $d$ and larger SNR; the different versions of transfer learning
methods perform better with larger number of informative sources as long as $d$
is not too large.

To save space, we report the simulation results for $\eta \in \{5,10\}$ with
normal residuals and $p \in \{ 75,225\}$ under $\eta = 10$ in Tables~\ref
{tab:c1h1eta10-mse}-\ref{tab:c2h2p225-ql} in the Appendix. 
The results indicate the superiority of the proposed approach under both 
low-dimension and high-dimension scenarios.

\begin{figure}[tbp]
\minipage{0.46\textwidth}
  \includegraphics[width=\linewidth]{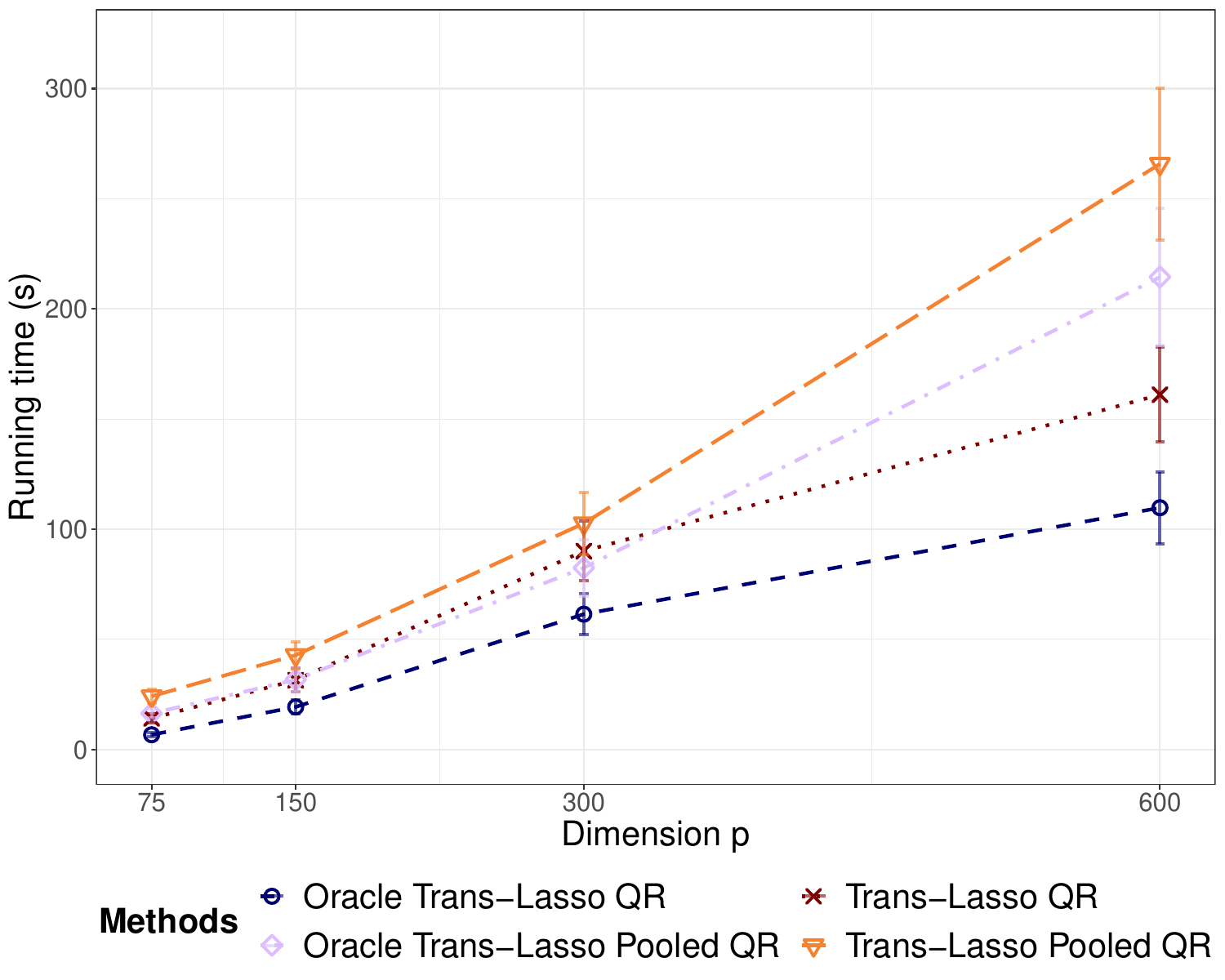}
  \subcaption{Average running time comparison under $n = 10000$ with different $p$}\label{fig:time_vs_p}
\endminipage\hfill
\minipage{0.46\textwidth}
  \includegraphics[width=\linewidth]{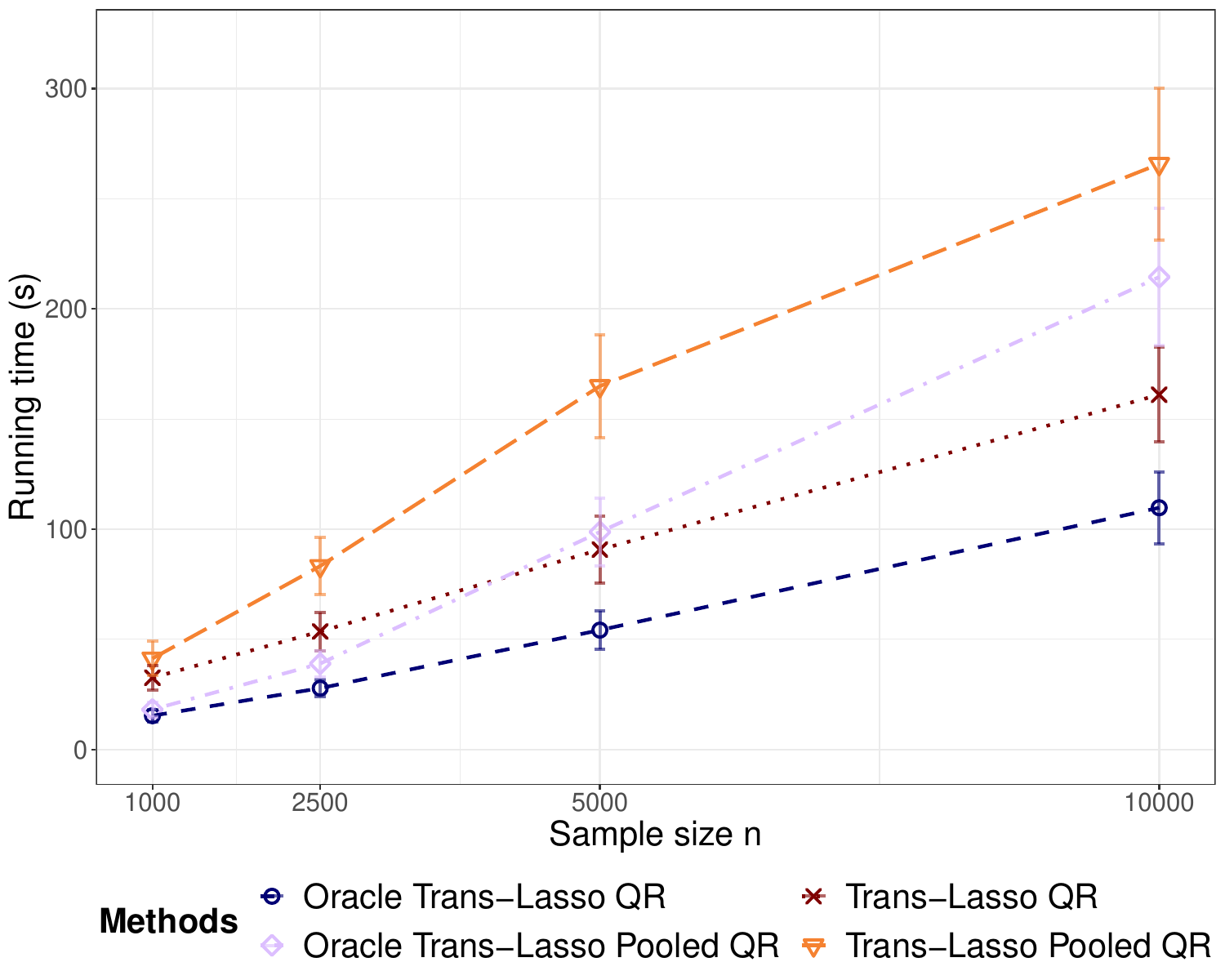}
  \subcaption{Average running time comparison  under $p = 600$ with different $n$}\label{fig:time_vs_n}
\endminipage
\caption{Running time comparison between Trans-Lasso QR and Trans-Lasso Pooled QR. Each setting is replicated 100 times, and each error bar extends $\pm 1.96 \times sd$ around the average. The experiment was run on a computational cluster with 4 Intel Xeon Gold 6230 CPUs at 2.10 GHz.} 
\label{fig:time_vs_pn}
\end{figure}

\subsection{Heterogeneous Designs}
\label{subsec:hetero}

We assume that the covariates from the target are generated from the Gaussian distribution with mean zero and the identity covariance matrix, and the covariates from the $k$th source are generated from the Gaussian distribution with mean zero and a covariance matrix with a Toeplitz structure
\begin{singlespace}
\begin{equation*}
\bSigma  = \left( {\begin{array}{*{20}{c}}
{\begin{array}{*{20}{c}}
{{\sigma _0 \:}}&{{\sigma _1 \:}}&{{\sigma _2}}\\
{{\sigma _1 \:}}&{{\sigma _0 \:}}&{{\sigma _1}}\\
{{\sigma _2 \:}}&{{\sigma _1 \:}}&{{\sigma _0}}
\end{array}}&{\begin{array}{*{20}{c}}
 \cdots \\
{}\\
{}
\end{array}}&{\begin{array}{*{20}{c}}
{}&{{\sigma _{p - 2}}}&{{\sigma _{p - 1}}}\\
{}&{}&{{\sigma _{p - 2}}}\\
{}&{}&{}
\end{array}}\\
{\begin{array}{*{20}{c}}
 \vdots &{\quad}&{\quad}
\end{array}}& \ddots &{\begin{array}{*{20}{c}}
{\quad}&{\quad}& \vdots 
\end{array}}\\
{\begin{array}{*{20}{c}}
{}&{}&{}\\
{{\sigma _{p - 2}}}&{}&{}\\
{{\sigma _{p - 1}}}&{{\sigma _{p - 2}}}&{}
\end{array}}&{\begin{array}{*{20}{c}}
{}\\
{}\\
 \cdots 
\end{array}}&{\begin{array}{*{20}{c}}
{}&{}&{}\\
{}&{{\sigma _0 \:}}&{{\sigma _1}}\\
{}&{{\sigma _1 \:}}&{{\sigma _0}}
\end{array}}
\end{array}} \right).
\end{equation*}
\end{singlespace}
with its first row given as  
\begin{equation*}
(\sigma_0,\ldots, \sigma_{p-1}) = ( {1,\underbrace {1/\left( {k + 1} \right), \ldots ,1/\left( {k + 1} \right)}_{2k - 1},{0_{p - 2k}}} ).
\end{equation*}
We refer to this as the ``heterogeneous design'' Scenario. The other settings are same as in Section \ref{sim:homo}.

The simulation results are reported in Figures~\ref{fig:hete-beta}-\ref{fig:hete-loss} and Tables~\ref{tab:cii-normal-mse}-\ref{tab:cii-cauchy-ql} in~\Cref{sec:addition-result}. We omit detailed discussions as the main observations are consistent with those made in the homogeneous design scenario and with our theoretical results.

\section{Application on Airplane Hard Landing}\label{sec:real-data}

In aviation industry all over the world, quick access record (QAR) recorders are widely installed on
aircraft to collect real-time data. With such real-time QAR data, statistical learning and machine learning approaches can be used to help prevent
incidents during a flight's most crucial time - approaching and
landing. For example, \citet{hong2008fuel} analyzed QAR data for fuel control, \citet{lan2012flight} 
investigated QAR data for the safety of landing at
high-attitude airports, and \citet{sun2012research} used QAR data to
characterize pilots with their operations.

We study the risk of hard landing, i.e., high vertical acceleration at the touchdown point of a flight, with QAR data for 3 types of airplanes: Boeing 737 (B737), Airbus A320
(A320), and Airbus A380 (A380). The maximum vertical acceleration (MVA) during landing is a commonly-used standard to measure the loading during landing \citep{wang2014analysis,qian2017improved}, so it can serve as a numerical proxy of the risk of hard landing and is used as the response variable in our study. The sensors on the airplane record 15 flight attributes during the flight, including Airspeed, Altitude, Bank Angle, Flap
 Angle, Gross Weight, Ground Speed, Wind Speed and Direction, Stabilizer Angle,
 Temperature, Fuel Consumption, Fuel Flow, and the Acceleration. In our study, the covariates consist of these attributes measured at several critical time points before or during the landing procedure, as well as various summary statistics (average, max, etc) of these attributes computed for different phases of a flight. A full list of the covariates is provided in Section~\ref{sec:feature-dictionary} in the Appendix. 
 Indeed, previous catastrophes indicate that there exist strong linkages between the MVA (or the loading on the landing gear) and some flight attributes captured in QAR. For instance, in 1992, Martinair Flight 495 crashed on its landing runway with a high loading partially because of the low airspeed caused by a micro-burst.

To study the associations between the MVA and the flight attributes, quantile regression is more suitable than mean regression for various reasons. First, we mainly concern those high-risk flights with higher than usual MVA values. Second, the empirical distribution of MVA is generally highly right skewed as seen from Figure~\ref{fig:qqmva} in the Appendix, which may make the normality assumption in mean regression models inappropriate. 
Another reason is that the associations between flight attributes and the MVA often exhibit heteroscedasticity; see Figure~\ref{fig:hetero-show} in the Appendix for an example. However, existing works on modeling the hard landing with QAR data take a target-only modeling approach by focusing on a single type of aircraft \citep{qian2017improved} or a one-size-fits-all approach by modeling all types of aircraft together \citep{chen2021detection}.  

We apply the proposed transfer learning approaches to perform quantile regression analyses of the MVA on the QAR features at $\tau = 0.9$, for three types of flights, namely, Boeing 737 (B737), Airbus A320 (A320), and Airbus A380 (A380). We consider each of the three types of airplanes as the target and the rest two types as the sources. Under each of the three settings, we randomly split the target data to 80\% training and 20\% testing. All the features are standardized within each dataset prior to performing the train-test split. The training data are used to fit QR models and the tuning parameters are selected by the cross validation procedure as described in Section~\ref{sec:simulation}. The testing data are then used to evaluate the final tuned models. We consider Lasso QR, the Naive Trans-Lasso QR (assuming that all sources are informative), and the proposed Trans-Lasso QR methods. For the case that A380 is the target, we have also fitted transfer learning models with all different choices of informative set. The random splitting procedure is repeated 20 times under each setting, from which we compute the means and the standard deviations of the out-of-sample quantile loss values for different methods.

Figure~\ref{fig:realdata}(a) shows the results of the out-of-sample quantile loss values for the three settings of different targets. For each bar, its height represents the average quantile loss value and its error bar extends two standard deviations to indicate how the individual loss values are dispersed around the average. The Trans-Lasso QR method is the most beneficial for modeling A380 as the target, where it leads to substantially lower quantile loss than Lasso QR and Naive Trans-Lasso QR. This may be due to the fact that the sample size for A380 is quite low so it is crucial to borrow information from other similar types of flights. Indeed, the number of flight records for A380 is around 10,000, which is much smaller than that of B737 and A320, which exceed 630,000. On the other hand, neither B737 nor A320 appears to benefit from transfer learning, as the sample size for each type is already quite adequate. Nevertheless, the performance of Trans-Lasso QR is at least comparable to Lasso QR and is always slightly better than Naive Trans-Lasso QR, which suggests that Trans-Lasso QR is quite robust and can avoid negative transfer with data-driven informative set selection.

Figure~\ref{fig:realdata}(b) shows the results of using different informative sets for the case that A380 is the target. Very interestingly, the results suggest that using A320 as the single informative set leads to the best transfer learning model for A380. By cross referencing the two figures, it can be seen that this informative set is also the one selected by the proposed Trans-Lasso QR method. Indeed, A380 is more similar to A320 than to B737 since the former two are essentially cousins from the same company. The Boeing series aircrafts adopt the control wheel and
control column to operate, while the Airbus aircrafts are equipped with the
control stick. With a control stick that uses wire technology, the operating
commands are delivered to the computer for translation and assignment, however,
with the control wheel and column, the signal is delivered physically, and it
directly connects with all the control surfaces including several flaps, slats,
stabilizers, etc. See \citet{tomczyk2002proposal} and \citet{filburn2020flight} for more discussions about this topic. This difference in operating mechanisms may lead to
the difference in landing performance under even the same external conditions, making A320 a more informative source than B737 to A380.

\begin{figure}[tbp]
\minipage{0.46\textwidth}
  \includegraphics[width=\linewidth]{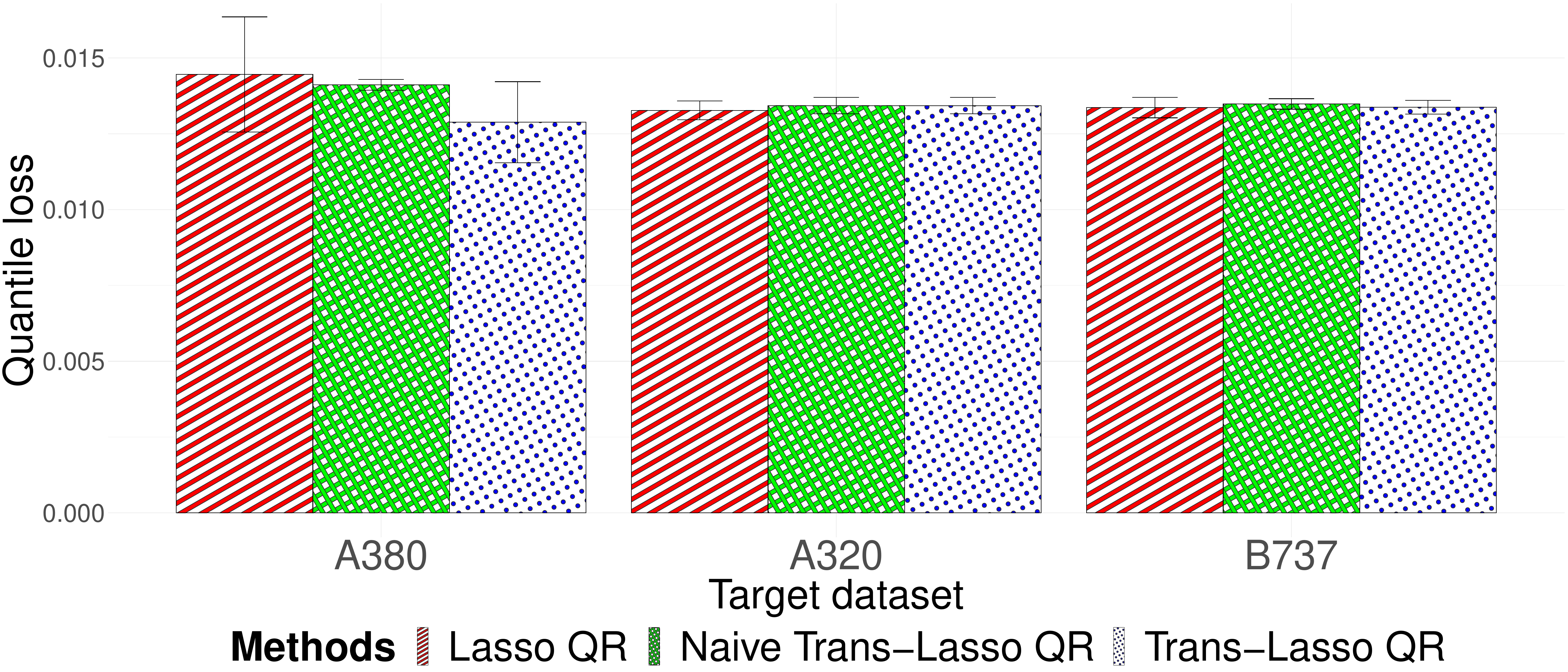}
  \subcaption{Quantile loss on different selected target datasets with different algorithms.}\label{fig:realdata-loss}
\endminipage\hfill
\minipage{0.46\textwidth}
  \includegraphics[width=\linewidth]{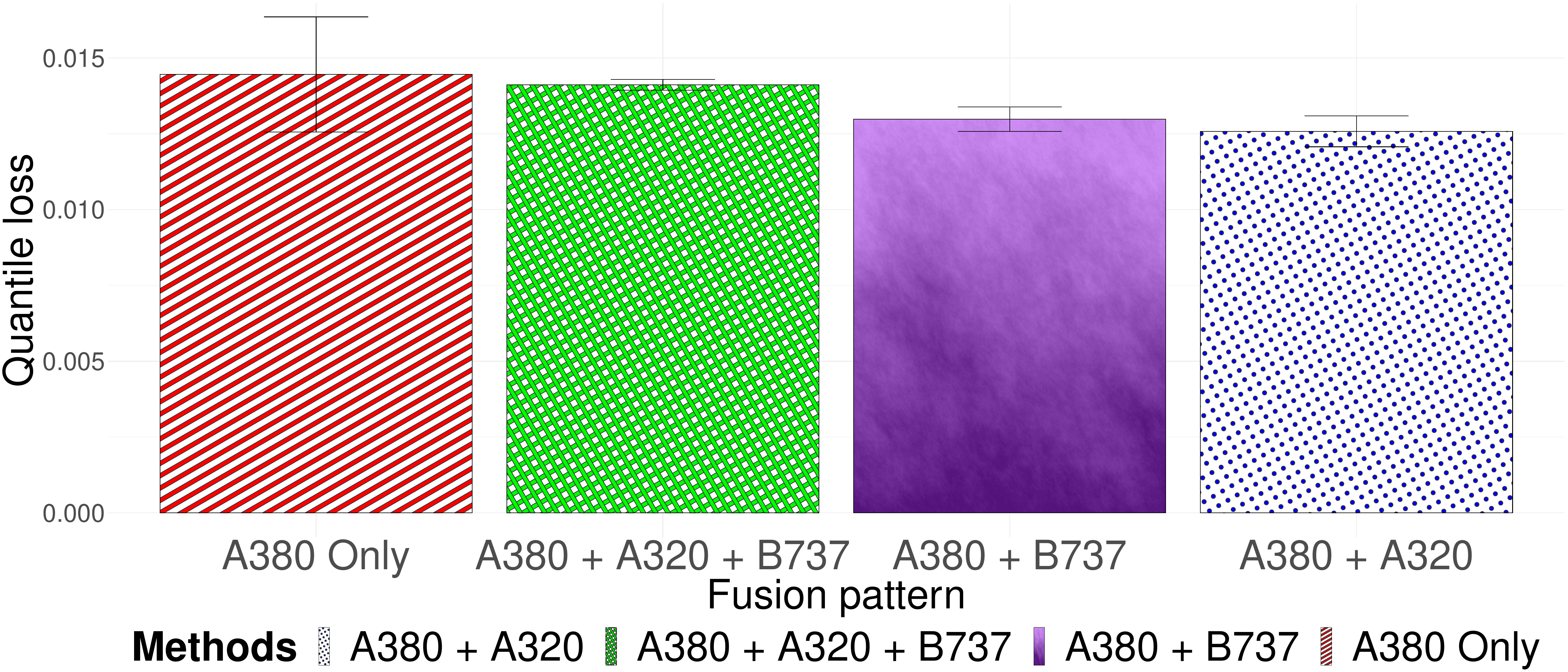}
  \subcaption{Quantile loss on A380 as the target with all different fusion combinations.}\label{fig:realdata-combine}
\endminipage
\caption{Performances comparison for the hard landing analysis with given quantile level $\tau=0.9$.} 
\label{fig:realdata}
\end{figure}

\begin{figure}[tbp]
  \centering
  \includegraphics[width=0.45\linewidth]{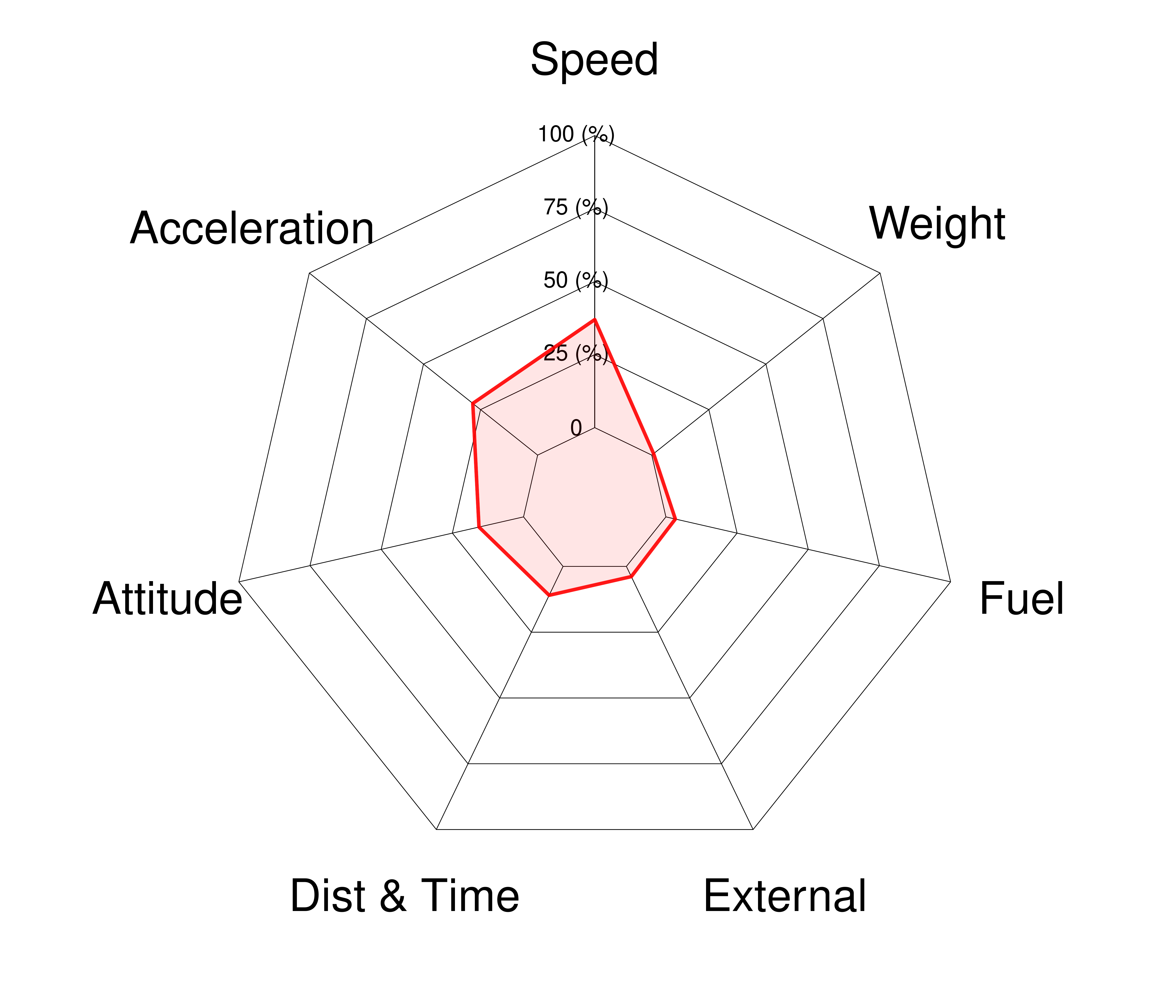}
  \caption{Radar plot for categorized feature contribution from A380 Tran-Lasso QR with $\tau=0.9$.}\label{fig:radplot}
\end{figure}

To gain more insights on the risk factors of hard landing for A380, we categorize all features and quantify their group-level relative contributions to the MVA, based on the fitted Trans-Lasso QR model with A380 being the target. Specifically, all features are categorized into seven categories: Speed, Fuel (consumption), Distance and Time, Acceleration (non-vertical), Weight, Attitude, and External. The detailed categorization is provided by the 
 ``Type'' column of the feature table in~\Cref{sec:feature-dictionary}. To measure the contribution of each category of features, we compute the sum of the absolute values of their estimated coefficients. 
These measures from different feature categories are then divided by the total contributions from all the categories to make them summing up as $1$ and become a set of relative contributions within $\left[ {0,1} \right]$. Figure~\ref
{fig:radplot} visualizes the relative contributions of the seven feature categories using a radar plot. We can see that speed, acceleration and attitude measurements appear to be the more relevant to the hard landing risk. Interesting, many features in these three categories could potentially be controlled or altered by the pilot team in normal circumstances, while the features in the other three of four categories, namely, fuel, weight, and external measurements, arguably, are mostly external and ``uncontrollable''. 
As such, the results suggest that it is hopeful to reduce the risk of hard landing by minimizing the bad impact or maximizing the good impact from the controllable ``human factors''. In other words, training better pilots may be the most important.

\begin{figure}[htp]
\centering
\includegraphics[width=1\textwidth]{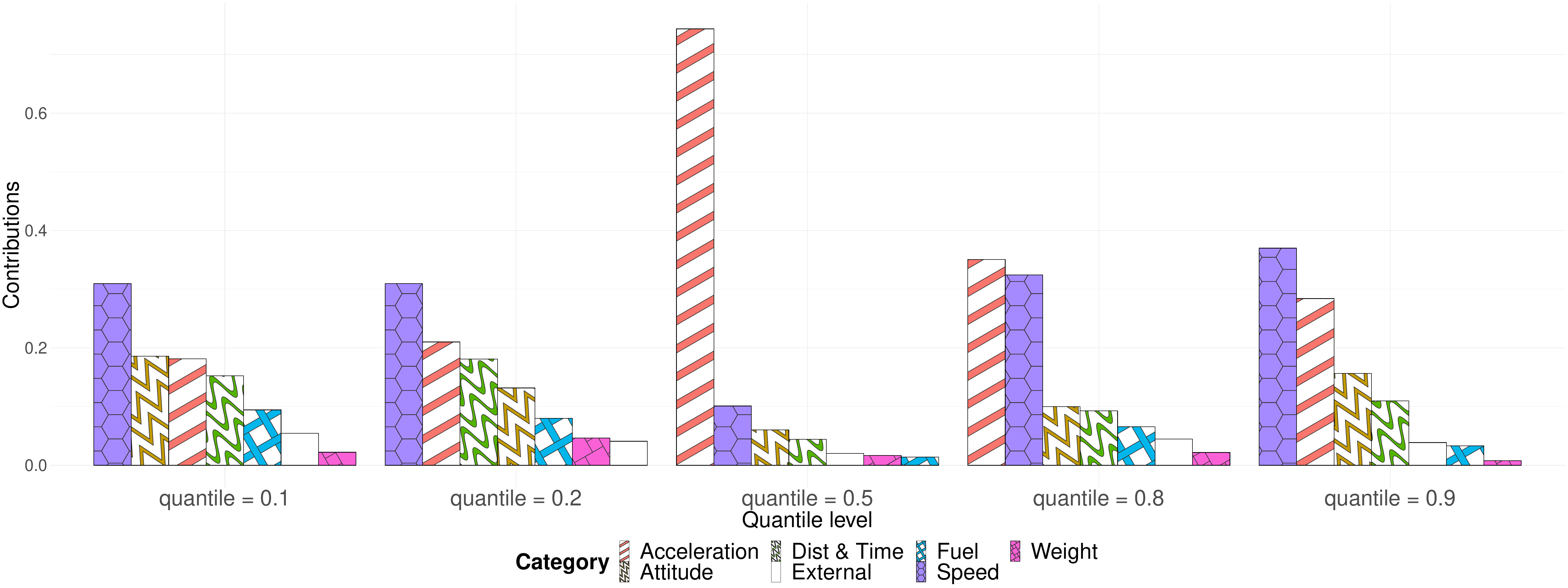}
\caption{Barplot of relative contributions of seven feature categories from A380 Trans-Lasso QR models with varying quantile levels. 
}
\label{fig:bar_all}
\end{figure}

It is also interesting to compare the risk factors for hard landing across quantile levels. We thus apply the Trans-Lasso QR with A380 being the target under three quantile levels, namely, $\tau \in \{ 0.1, 0.2, 0.5, 0.8, 0.9\}$, where $\tau \in \{0.1,0.2\}$ represent good landing, $\tau = 0.5$ represents normal landing, and $\tau \in \{0.8,0.9\}$ represent hard landing. Figure~\ref{fig:bar_all} show the comparison of the relative contributions of the risk factors from the fitted models with different quantile levels. 
There are several interesting observation. The three controllable or human risk factors, namely, speed, acceleration, and attitude, are always among the most important factors, and their overall importance tends to be higher with the deterioration of the quality of landing. For good landings, the ``External'' factor also plays an important role besides the three human factors. For normal landing, the acceleration factor stands out as a single most important factor, while for hard landing, both acceleration and speed become important. This suggests that hard landing is more likely to occur when both acceleration and speed go wrong.

\section{Discussion}\label{sec:discussion}

We have developed Trans-Lasso QR, to enable transfer learning for fitting high-dimensional quantile regression. Our method utilizes the idea of response surrogation to simplify the quantile loss and the idea of sampling splitting to distinguish informative sources for ensuring positive transfer. With QAR data, we are able to build an improved quantile regression models for the landing load of A380 airplane with the help from A320 and B737. 

There are several directions for future research. A potential extension is to investigate transfer learning for binary quantile regression through latent variable models, which may be preferred in modeling rare events with highly sparse binary outcomes. Moreover, it is interesting to consider the scenario of multiple targets, and how to fit multiple or multivariate transfer learning models in a collaborative fashion remains a challenging yet rewarding problem. Furthermore, enabling transfer learning of quantile regression without pooling raw data from the sources to the target could be very useful; this ``transfer learning without transfer'' approach would leverage the benefits of integrative learning while minimizing the need of data integration and preserving the confidentiality of individual sources. Another interesting problem is to utilize the composite quantile regression framework \citep{zou2008composite} under the transfer learning setup. We will also exploring the connections between transfer learning and hierarchical Bayesian modeling \citep{karbalayghareh2018optimal},  
  and apply the proposed approach to tackle more real-world problems in engineering and public health to better utilize data from disparate sources.

\appendix

\section{Proofs}\label{sec:proofs}

\subsection{Proofs of Theorem~\ref{thm:oracle}}

We acknowledge that the proof of Theorem~\ref{thm:oracle} follows similar structure as those of Theorem 1 and Theorem 4 in \citet{li2020transfer}, and the main challenge is to handle the quantile loss via a surrogate response \citep{chen2020distributed}. For the sake of completeness, we still present the relevant materials and results from existing works that are adapted to our setting. For simplicity, we do not use different notations for the constants appear in rates or probability expressions. For example, we always use $c_1$ to describe any constants in probability expressions like $1-\exp\{-c_1n_0\}$.

\begin{lemma}[Proposition 11 in \citet{chen2020distributed}]
  \label{lem-1}
  Denote ${\hbSigma^{( k )}} = \sum\nolimits_{i = 1}^{{n_k}} {{\bX^{( k )}}^\top} {\bX^{( k )}}/{n_k}$ for $k = 0,\ldots, K$. Recall 
${\tby^{( k )}}$ is the surrogate response as defined in \eqref{eq:surrogate} of the main paper. Under Assumption~\ref{assum:A1},~\ref{assum:A2},~\ref{assum:A4},~\ref{assum:A5} and~\ref{assum:A6}, for any $k = 0,\ldots, K$, we have
  \begin{align*}
                {{\left\| {\frac{1}{n_k}{\bX^{\left( k \right)^\top}}{\tby^{\left( k \right)}} - {\hbSigma^{\left( k \right)}}{\bbeta ^{\left( k \right)}}} \right\|}_\infty } \lesssim
   \sqrt {\frac{{\log \left( {p \vee {n_k}} \right)}}{{{n_k}}}}  + \frac{{{s_k}\log \left( {p \vee {n_k}} \right)}}{{{n_k}}}.
  \end{align*}
  with probability at least $1-c_1n_k^{ - c_{2} s_k}-c_3\exp ( { - {c_{4}}{n_k}} )$ for positive constants ${c_1}$ to ${c_4}$.
\end{lemma}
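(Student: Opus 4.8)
The plan is to reproduce the argument of Proposition~11 in \citet{chen2020distributed}, with the bookkeeping adjusted to let $s_k$ grow as permitted by Assumption~\ref{assum:A1}. The starting point is an exact identity: writing $u_k=\hbbeta^{(k)}-\bbeta^{(k)}$ and $\psi_i^{(k)}=I(y_i^{(k)}-\bx_i^{(k)\top}\hbbeta^{(k)}\le 0)-\tau$, the definition \eqref{eq:surrogate} of the surrogate response gives
\begin{equation*}
\frac{1}{n_k}\bX^{(k)\top}\tby^{(k)}-\hbSigma^{(k)}\bbeta^{(k)}
=\hbSigma^{(k)}u_k-\bigl\{\widehat{f}^{(k)}(0)\bigr\}^{-1}\frac{1}{n_k}\sum_{i=1}^{n_k}\bx_i^{(k)}\psi_i^{(k)}.
\end{equation*}
So it suffices to control the two terms on the right in $\ell_\infty$. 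The link between them is the first-order (sub)optimality condition of the penalized quantile regression \eqref{eq:initial-est}: at $\hbbeta^{(k)}$ one has $\frac1{n_k}\sum_i\bx_i^{(k)}\widehat v_i=\lambda_0\widehat z$ with $\widehat v_i\in\partial\rho_\tau(y_i^{(k)}-\bx_i^{(k)\top}\hbbeta^{(k)})$ and $\|\widehat z\|_\infty\le 1$, and $\widehat v_i=-\psi_i^{(k)}$ except at the observations whose residual is exactly $0$.

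The ingredients I would assemble are: (a) the standard $\ell_1$-penalized quantile-regression rates $\|u_k\|_1\lesssim s_k\sqrt{\log(p\vee n_k)/n_k}$ and $\|u_k\|_2\lesssim\sqrt{s_k\log(p\vee n_k)/n_k}$, together with exact support recovery $\mathrm{supp}(\hbbeta^{(k)})\subseteq S_k$ --- this is where Assumption~\ref{assum:A4} enters --- so that $\|\hbbeta^{(k)}\|_0\le s_k$ and hence, a basic solution of a penalized quantile program interpolating at most $\|\hbbeta^{(k)}\|_0$ points, at most $s_k$ residuals vanish; (b) $\max_i\|\bx_i^{(k)}\|_\infty\lesssim\sqrt{\log(p\vee n_k)}$ and entrywise concentration of $\hbSigma^{(k)}$ around $\bSigma^{(k)}$, from the sub-Gaussianity in Assumption~\ref{assum:A2}; (c) consistency of the plug-in kernel estimate $\widehat{f}^{(k)}(0)$ for $f^{(k)}(0)$: its bias is $O(h^{(k)})$ by the Lipschitz property in Assumption~\ref{assum:A5}, its stochastic fluctuation is handled by a Bernstein bound under Assumption~\ref{assum:A6}, and the error incurred by plugging $\hbbeta^{(k)}$ in place of $\bbeta^{(k)}$ in the residuals is bounded using boundedness of $G'$, the localization of $G'$ to $[-h^{(k)},h^{(k)}]$, the rate of $u_k$, and the bandwidth choice $h^{(k)}\asymp\sqrt{s_k\log n_k/n_k}$; since $f^{(k)}(0)$ is bounded away from $0$ by Assumption~\ref{assum:A5}, this yields $\{\widehat{f}^{(k)}(0)\}^{-1}=O(1)$ on the good event. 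Using (a)--(b), the score term satisfies $\|\frac1{n_k}\sum_i\bx_i^{(k)}\psi_i^{(k)}\|_\infty\lesssim\lambda_0+\frac{s_k}{n_k}\max_i\|\bx_i^{(k)}\|_\infty\lesssim\sqrt{\log(p\vee n_k)/n_k}+s_k\log(p\vee n_k)/n_k$. For the term $\hbSigma^{(k)}u_k$ I would use the local quadratic behavior of the population quantile loss around $\bbeta^{(k)}$ (its gradient has Jacobian $f^{(k)}(0)\bSigma^{(k)}$, with remainder governed by the smoothness of $f^{(k)}$): this shows $\bSigma^{(k)}u_k$ equals, to leading order, $\{f^{(k)}(0)\}^{-1}$ times the difference between the empirical score at $\hbbeta^{(k)}$ --- bounded just above --- and the score at $\bbeta^{(k)}$, namely $\frac1{n_k}\sum_i\bx_i^{(k)}\{I(\eps_i^{(k)}\le 0)-\tau\}$, which is mean zero with $\ell_\infty$ deviation $\lesssim\sqrt{\log(p\vee n_k)/n_k}$; the empirical-process remainder and the gap $\|(\hbSigma^{(k)}-\bSigma^{(k)})u_k\|_\infty$ are both lower order by (a)--(b).

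Collecting these bounds gives the claimed $\ell_\infty$ rate $\sqrt{\log(p\vee n_k)/n_k}+s_k\log(p\vee n_k)/n_k$ on an event of probability at least $1-c_1n_k^{-c_2 s_k}-c_3\exp(-c_4 n_k)$, the $n_k^{-c_2 s_k}$ factor coming from the support-recovery / interpolation-count step and the exponential factor from the sub-Gaussian concentration of the design, the scores, and the kernel average. I expect the main obstacle to be the interaction of the last two ingredients: because the quantile loss is non-differentiable, $\hbbeta^{(k)}$ solves only a subgradient inclusion, so the indicator-valued score must be controlled simultaneously with the plug-in density estimate appearing in the denominator; pushing the density-estimation error (and the accompanying empirical process over residuals) below the target rate requires a delicate balance among the bandwidth, the Lipschitz modulus of $f^{(k)}$, the support-size bound on $\hbbeta^{(k)}$, and the localization of $G'$ --- this is precisely the technical heart of Proposition~11 in \citet{chen2020distributed}, re-run here to accommodate growing $s_k$.
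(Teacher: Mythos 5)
Your sketch is sound and follows essentially the same route the paper takes: the paper does not reprove this result but imports Proposition~11 of \citet{chen2020distributed} and, in the accompanying Remark, explains only what must be adjusted — namely that the raw rate from that proof is $\sqrt{\log p/n_k}+\sqrt{s_k a_n\log n_k/n_k}+a_n^2$ with $a_n=\sqrt{s_k\log n_k/n_k}$, so one needs $s_k a_n=O(1)$ (hence $s_k=O(n_k^r)$, $r<1/3$, i.e.\ Assumption~\ref{assum:A1}), and that Assumption~\ref{assum:A4} is needed to control $|\widehat f^{(k)}(0)-f^{(k)}(0)|$. Your decomposition into $\hbSigma^{(k)}u_k$ and the rescaled score, the Bahadur-type expansion, and the identification of the cube-root sparsity constraint all match. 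The one substantive divergence is where you deploy the irrepresentable condition: you use support recovery to bound the number of exactly interpolated residuals in the subgradient/KKT step, and you attribute the $n_k^{-c_2 s_k}$ probability term to that step. The paper's Remark instead uses $\widehat S_k\subseteq S_k$ to make the kernel density estimate a function of only the $s_k$ coordinates in $S_k$, so that the supremum over the ball $\|\bbeta_{S_k}-\bbeta^{(k)}_{S_k}\|\le a_n$ can be discretized into $n^{M s_k}$ points and union-bounded — this covering argument is what produces the $n_k^{-c_2 s_k}$ term and is, per the paper, the reason Assumption~\ref{assum:A4} appears in Lemma~\ref{lem-1} at all. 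Your plan does include the density-estimation control but treats it as routine and does not tie it to support recovery; if you carried out your version literally, the sup over residual perturbations in a $p$-dimensional (rather than $s_k$-dimensional) neighborhood would not yield the stated probability, so you should relocate the use of Assumption~\ref{assum:A4} accordingly. Otherwise the two arguments buy the same thing and differ only in this bookkeeping.
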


\begin{remark}
The Assumption~\ref{assum:A1} is necessary here by noticing that following the proof of Proposition 11 in \citet{chen2020distributed} will lead to
\begin{align*}
{{\left\| {\frac{1}{n_k}{\bX^{\left( k \right)^\top}}{\tby^{\left( k \right)}} - {\hbSigma^{\left( k \right)}}{\bbeta ^{\left( k \right)}}} \right\|}_\infty } \lesssim \sqrt {\frac{{\log p}}{{{n_k}}}}  + \sqrt {\frac{{{s_k}{a_n}\log {n_k}}}{{{n_k}}}}  + a_n^2 ,
\end{align*}
where ${a_n} = \sqrt {{s_k}\log {n_k}/{n_k}} $ is the convergence rate of the initial estimator of $\bbeta_k$. As such, to obtain the desired rate in Lemma S.1, we need ${s_k}{a_n} = O( 1 )$, i.e., $\sqrt {s_k^3\log {n_k}/{n_k}}  = O( 1 )$, which requires $s_k = O( {n_k^r} )$ with $r \in ( {0,1/3})$ as shown in Assumption~\ref{assum:A1}.

The Assumption~\ref{assum:A4} (Irrepresentable condition) is also necessary here to control the error of estimating the error density functions, which is preliminary for this Lemma~\ref{lem-1}.

Namely, with ${{\widehat f}^{( k )}}( 0 )$ defined in~\Cref{eq:est_density}, Assumption~\ref{assum:A4} contributes to obtain $$\left| {{{\widehat f}^{\left( k \right)}}\left( 0 \right) - {f^{\left( k \right)}}\left( 0 \right)} \right| = O\left( {\sqrt {\frac{{{s_k}\log {n_k}}}{{{n_k}{h^{\left( k \right)}}}}}  + \sqrt {\frac{{{s_k}\log {n_k}}}{{{n_k}}}} } \right).$$
Specifically, denote $$D_{n,{h^{\left( k \right)}}}^{\left( k \right)}\left( \bbeta  \right) = \frac{1}{{n{h^{\left( k \right)}}}}\sum\limits_{i = 1}^{{n_k}} {G\left( {\frac{{y_i^{\left( k \right)} - \bx_{i,{S_k}}^{\left( k \right)^\top}{\bbeta _{{S_k}}}}}{{{h^{\left( k \right)}}}}} \right)}, $$
in which $\bx_i^{(k)^\top}{\hbbeta ^{( k )}}$ is replaced by $\bx_{i,{S_k}}^{( k )^\top}{\bbeta _{{S_k}}}$. If ${{\hat S}_k} \subseteq {S_k}$ holds true with high probability (for which we would need the irrepresentable condition), then we have ${{{\widehat f}^{( k )}}( 0 ) = D_{n,{h^{( k )}}}^{( k )}( {{{\hat \bbeta }^{( k )}}} )}$ with high probability and can bound $| {{{\widehat f}^{( k )}}( 0 ) - {f^{( k )}}( 0 )} |$ by $$\left| {{{\widehat f}^{\left( k \right)}}\left( 0 \right) - {f^{\left( k \right)}}\left( 0 \right)} \right| \le \mathop {\sup }\limits_{\left\| {{\bbeta _{{S_k}}} - \bbeta _{{S_k}}^{\left( k \right)}} \right\| \le {a_n}} \left| {D_{n,{h^{\left( k \right)}}}^{\left( k \right)}\left( \bbeta  \right) - {f^{\left( k \right)}}\left( 0 \right)} \right|,$$
where ${a_n} = {\sqrt {s_k\log n_k/n_k} } $ is the convergence rate of the initial estimator of $\bbeta_k$ provided by \citet{fan2014adaptive}.

The main consequence, as shown in \citet{chen2020distributed}, is that the estimation of the density only relies on the $s_k$ dimensions of the initial estimator, not all the $p$ dimensions, which facilitates the next step of using sub-interval cutting technique on each dimension to further bound the error of estimating the error density. That is, for each dimension $i$ in the support index $S_k$, we divide the interval $[ {\beta _i^{( k )} - {a_n},\beta _i^{( k )} + {a_n}} ]$ into $n^M$ small subintervals and each has length $2{a_n}/{n^M}$, forming a set of points $\{ {{\bbeta _j},1 \le j \le {q_n}} \}$ in ${\mathbb{R}^p}$ with cardinality ${q_n} \le {n^{M{s_k}}}$ such that for any $\bbeta$ in the ball $\| {{\bbeta _{{S_k}}} - \bbeta _{{S_k}}^{( k )}} \| \le {a_n}$, we have $\| {{\bbeta _{{S_k}}} - \bbeta _{j,{S_k}}^{( k )}} \| \le 2\sqrt {{s_k}} {a_n}/{n^M}$ for some $1 \le j \le {q_n}$ and $\| {{\bbeta _{j,{S_k}}} - \bbeta _{{S_k}}^{( k )}} \| \le {a_n}$. Hence, we can further link ${\sup _{\| {{\bbeta _{{S_k}}} - \bbeta _{{S_k}}^{( k )}} \| \le {a_n}}}| {D_{n,{h^{( k )}}}^{( k )}( \bbeta  ) - {f^{( k )}}( 0 )} |$ with ${\sup _{1 \le j \le {n^{M{s_k}}}}}| {D_{n,{h^{( k )}}}^{( k )}( {{\bbeta _j}} ) - {f^{( k )}}( 0 )} |$ and bound the later one with the help of the exponential inequality. This leads to the desired bound for the error of estimating density functions.
\end{remark}

\begin{lemma}[Theorem 1 in \citet{raskutti2010restricted}]
  \label{lem-2}
Denote ${\hbSigma^{\cal A}} = \sum\nolimits_{k \in {\cal A} \cup \{ 0 \}} {{\alpha _k}{\hbSigma^{( k )}}} $, ${\bSigma^{\cal A}} = \sum\nolimits_{k \in {\cal A} \cup \{ 0 \}} {{\alpha _k}{\bSigma^{( k )}}} $ and ${\rho ^2}( \bSigma  ) = \mathop {\max }\limits_{j = 1, \ldots ,p} {\Sigma _{j,j}}$. Under Assumption~\ref{assum:A2}, we have
  \begin{equation*}
{u^\top}{\hbSigma ^{\cal A}}u \ge \frac{1}{32}{u^\top}{\bSigma ^{\cal A} }u - 81{\rho ^2}\left( \bSigma ^{\cal A}  \right)\frac{{\log p}}{n_{\cal A}+n_0}\left\| u \right\|_1^2
  \end{equation*}
  for any $u \in {\R^p}$ with probability at least $1 - \exp ( { - {c_{1}}{({n_\calA}+ n_0 )}} )$ and 
  \begin{equation*}
{u^\top}{\hbSigma ^{\left( 0 \right)}}u \ge \frac{1}{32}{u^\top}{\bSigma ^{\left( 0 \right)}}u - 81{\rho ^2}\left( {\bSigma ^{\left( 0 \right)}}  \right)\frac{{\log p}}{n_0}\left\| u \right\|_1^2
\end{equation*}
  for any $u \in {\R^p}$ with probability at least $1 - \exp ( { - {c_{1}}{n_0}} )$ for positive constant ${c_1}$.
\end{lemma}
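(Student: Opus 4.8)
The plan is to reduce both displays to the lower-isometry (restricted-eigenvalue) bound for a single Gaussian design established in \citet{raskutti2010restricted}, the only new feature being that $\hbSigma^{\calA}$ mixes blocks of samples with different population covariances. The second display is the base case: by \Cref{assum:A2} the rows of $\bX^{(0)}$ are i.i.d.\ $N(0,\bSigma^{(0)})$ with eigenvalues bounded away from $0$ and $\infty$, so Theorem~1 of \citet{raskutti2010restricted} applied to $\bX^{(0)}$ gives, for every $u\in\R^p$, a bound of exactly the stated form with $\rho^2(\bSigma^{(0)})=\max_j\bSigma^{(0)}_{jj}$, on an event of probability at least $1-\exp(-c_1 n_0)$.

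For the first display, stack the rows of $\{\bX^{(k)}:k\in\calA\cup\{0\}\}$ into one matrix $\bX\in\R^{N\times p}$ with $N=n_{\calA}+n_0$, so that $\hbSigma^{\calA}=\bX^\top\bX/N$, $u^\top\hbSigma^{\calA}u=\|\bX u\|^2/N$, and $\E[u^\top\hbSigma^{\calA}u]=u^\top\bSigma^{\calA}u$. The rows of $\bX$ are independent and Gaussian, though not identically distributed, and I would rerun the proof of \citet{raskutti2010restricted} with $\bX$ in place of a homogeneous design. Concretely: (i) the inequality is homogeneous of degree $2$ in $u$, so by a peeling argument over dyadic ranges of $\|u\|_1$ it reduces to lower bounding $\E\inf\{\|\bX u\|/\sqrt N:\ u^\top\bSigma^{\calA}u=1,\ \|u\|_1\le R\}$ for each fixed radius $R$; (ii) writing $\|\bX u\|=\sup_{\|v\|_2\le1}\sum_i v_i\langle\bx_i,u\rangle$ exhibits a centred Gaussian process in $(u,v)$, to which Gordon's Gaussian comparison inequality applies, lower bounding the infimum above in terms of the Gaussian width of $\{u:\ u^\top\bSigma^{\calA}u\le1,\ \|u\|_1\le R\}$, which is of order $R\sqrt{\rho^2(\bSigma^{\calA})\log p}$ since the $\ell_1$ constraint dominates; (iii) the map $Z\mapsto\inf_u\|\bX u\|$ over the constrained set of $u$ is Lipschitz in the underlying i.i.d.\ Gaussian matrix $Z$ with a constant controlled by the eigenvalue bounds of \Cref{assum:A2}, so standard Gaussian concentration of Lipschitz functionals upgrades the bound on the expectation to a high-probability bound. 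Each step uses only independence and Gaussianity of the rows, with $\bSigma^{\calA}$ (whose eigenvalues are again bounded, being a convex combination of the $\bSigma^{(k)}$) playing the role of the population covariance and $\rho^2(\bSigma^{\calA})=\max_j\bSigma^{\calA}_{jj}$ that of $\max_j\Sigma_{jj}$; the conclusion holds with probability at least $1-\exp(-c_1 N)$.

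The step I expect to be the real obstacle is (ii): Gordon's inequality is usually stated for a matrix with i.i.d.\ entries, whereas the process $\sum_i v_i\langle\bx_i,u\rangle$ here has covariance $\sum_i v_iv_i'\,u^\top\bSigma^{(k_i)}u'$, which does not split as a clean tensor, so one must verify the Slepian/Gordon variance-comparison conditions directly against a linearised comparison process built from $\bSigma^{\calA}$. One can sidestep this at a mild cost: apply the single-design bound of \citet{raskutti2010restricted} separately to each $\bX^{(k)}$ and take the $\alpha_k$-weighted average, using $\alpha_k/n_k=1/N$ together with the boundedness of $|\calA|$ (\Cref{assum:A3}) and of the condition numbers of the $\bSigma^{(k)}$ (\Cref{assum:A2}) to absorb the constants; since the $\alpha_k$ sum to one the coefficient $\tfrac1{32}$ of $u^\top\bSigma^{\calA}u$ is unchanged and only the numerical constant multiplying $\rho^2(\bSigma^{\calA})\frac{\log p}{N}\|u\|_1^2$ is enlarged, on an event of probability at least $1-(|\calA|+1)\exp(-c_1\underline{n}_{\calA})\ge1-\exp(-c_1'\underline{n}_{\calA})$, which still suffices for the probability statement in \Cref{thm:oracle}.
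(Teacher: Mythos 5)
Your proposal is correct, but it is worth noting that the paper supplies no argument here at all: Lemma~\ref{lem-2} is stated as a bare citation of Theorem~1 in \citet{raskutti2010restricted} (after the standard squaring step that turns the bound $\lVert \bX u\rVert/\sqrt{n} \ge \tfrac14\lVert\bSigma^{1/2}u\rVert - 9\rho(\bSigma)\sqrt{\log p/n}\,\lVert u\rVert_1$ into the displayed quadratic-form inequality with constants $1/32$ and $81$, via $c^2 \ge a^2/2 - b^2$ whenever $c \ge a-b$ and $c\ge 0$). You correctly identify the one point the citation glosses over: the pooled matrix behind $\hbSigma^{\calA}$ has independent Gaussian rows that are \emph{not} identically distributed, so Theorem~1 of \citet{raskutti2010restricted} does not literally apply to the first display. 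Your Gordon-comparison route (ii) is indeed the delicate part, but your fallback is clean and fully rigorous: apply the single-design bound to each $\bX^{(k)}$, multiply by $\alpha_k$, and sum. The coefficient $\tfrac1{32}$ of $u^\top\bSigma^{\calA}u$ is preserved exactly since $\sum_k\alpha_k\bSigma^{(k)}=\bSigma^{\calA}$; the error term becomes $81\,\tfrac{\log p}{n_{\calA}+n_0}\sum_k\rho^2(\bSigma^{(k)})\lVert u\rVert_1^2$ because $\alpha_k/n_k=1/(n_{\calA}+n_0)$, and since diagonal entries are sandwiched between the extreme eigenvalues, $\sum_k\rho^2(\bSigma^{(k)})\lesssim(|\calA|+1)\rho^2(\bSigma^{\calA})$ under Assumptions~\ref{assum:A2} and \ref{assum:A3}, enlarging only the numerical constant. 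The union bound degrades the exponent from $n_{\calA}+n_0$ to $\underline{n}_{\calA}$, and you are right that this weaker guarantee is all that Theorem~\ref{thm:oracle} actually consumes, since its probability bound already carries a $C_1\exp(-C_2\underline{n}_{\calA})$ term. In short, your argument is more careful than the paper's treatment and would be a worthwhile addition to it.
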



\begin{lemma}[Bernstein's inequality (Theorem 1.13 of \citet{rigollet2015high})]
  \label{lem-3}
  We say a random variable $X$ follows a sub-exponential distribution with parameter $\lambda$ (denoted as $X \sim subE( \lambda  )$) if $\E[ X ] = 0$ and its moment generating function satisfies
  \begin{equation*}
\E\left[ {{e^{sX}}} \right] \le \exp \left( {\frac{1}{2}{s^2}{\lambda ^2}} \right),\begin{array}{*{20}{c}}
{}&{\forall \left| s \right| \le \frac{1}{\lambda }}.
\end{array}
  \end{equation*}
  If ${X_1}, \ldots ,{X_n}$ are independent random variables with $\E{X_i} = {\mu _i}$ and ${X_i} - {\mu_i} \sim subE( {\lambda} )$, then for any $t >0$ we have
  \begin{equation*}
  \P\left( {\frac{1}{n}\left| {\sum\limits_{i = 1}^n {\left( {{X_i} - {\mu _i}} \right)} } \right| \ge t} \right) \le \exp \left( { - \frac{n}{2}\min \left\{ {\frac{{{t^2}}}{\lambda ^2},\frac{t}{\lambda}} \right\}} \right).
  \end{equation*}
\end{lemma}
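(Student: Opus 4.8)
The plan is to establish \Cref{lem-3} by the textbook Chernoff--Cram\'er argument, the only subtlety being that the sub-exponential moment generating function bound $\E[e^{sZ}]\le\exp(s^2\lambda^2/2)$ is available only on the truncated range $|s|\le 1/\lambda$, which is exactly what forces the two-regime $\min\{t^2/\lambda^2,\,t/\lambda\}$ form of the tail bound.

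First I would reduce to a one-sided statement. Set $Z_i = X_i - \mu_i$, so each $Z_i \sim subE(\lambda)$ and, because the bound $\exp(s^2\lambda^2/2)$ is even in $s$, also $-Z_i \sim subE(\lambda)$. Hence it suffices to bound $\P(\sum_i Z_i \ge nt)$; the symmetric bound for the lower tail follows by applying the same argument to $-Z_i$, and the union over the two tails contributes only a factor $2$, which is absorbed into the constants (equivalently, one may state the two-sided version with the harmless prefactor $2$, or slightly shrink the constant in the exponent for $n$ large).

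Next, for any $s\in[0,1/\lambda]$, Markov's inequality applied to $e^{s\sum_i Z_i}$ together with independence give
\[
\P\Big(\sum_{i=1}^n Z_i \ge nt\Big) \le e^{-snt}\prod_{i=1}^n \E[e^{sZ_i}] \le \exp\!\Big(n\big(\tfrac12 s^2\lambda^2 - st\big)\Big).
\]
The one genuinely substantive step is then the constrained optimization of $g(s) = \tfrac12 s^2\lambda^2 - st$ over $s\in[0,1/\lambda]$. The unconstrained minimizer is $s^\star = t/\lambda^2$: if $t\le\lambda$ then $s^\star$ is feasible and $g(s^\star) = -t^2/(2\lambda^2)$, while if $t>\lambda$ the minimum over the interval is attained at the endpoint $s = 1/\lambda$, where $g(1/\lambda) = \tfrac12 - t/\lambda \le -t/(2\lambda)$ using $t/\lambda\ge 1$. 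Since $t^2/\lambda^2 \le t/\lambda$ exactly when $t\le\lambda$, the two cases glue together to yield the exponent $-\tfrac n2\min\{t^2/\lambda^2,\,t/\lambda\}$, as claimed.

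I do not foresee any real obstacle here: the proof is short, and the only place that demands care is respecting $|s|\le 1/\lambda$ when optimizing the Chernoff exponent — this restriction is precisely the mechanism producing the $\min$ in the statement — together with the routine bookkeeping of the two-tail factor of $2$.
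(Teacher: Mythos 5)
Your proof is correct and is the standard Chernoff--Cram\'er argument that underlies the cited result; the paper itself offers no proof of this lemma, simply importing it from \citet{rigollet2015high}, and your derivation (including the constrained optimization over $|s|\le 1/\lambda$ that produces the $\min\{t^2/\lambda^2,\,t/\lambda\}$ exponent) matches the source's proof. Your caveat about the two-sided bound is also well placed: as literally stated with the absolute value and no prefactor $2$, the lemma is the union-bound version of Rigollet's Theorem 1.13 (which bounds the maximum of the two one-sided tails), so a factor of $2$ should strictly appear, but this is immaterial for how the lemma is used in the paper, where only the exponential rate matters.
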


We now give the outline of proving Theorem~\ref{thm:oracle}, followed by detailed derivations. Denote ${\bbeta ^\calA}$ as the probabilistic limit of ${\hbbeta ^\calA}$ in Step 3 of the oracle algorithm, which is the solution of the following moment equation
\begin{align}
\label{eq:moment_informative}
\E\left[ {\sum\limits_{k \in {\calA} \cup \left\{ 0 \right\}} {{{\left( {{\bX^{\left( k \right)}}} \right)}^\top}\left( {{{\tilde y}^{\left( k \right)}} - {\bX^{\left( k \right)}}{\bbeta ^{\calA}}} \right)} } \right] = 0,
\end{align}
where $\widetilde y^{(k)} = {\bx^{(k)^\top}}{\bbeta ^{( k )}} - \{ {f^{( k )}}( 0 ) \}^{-1}\{ {I( {y^{(k)} - {\bx^{(k)^\top}}{\bbeta
      ^{( k }} \le 0} ) - \tau} \}.$ Thus, ${\bbeta ^{\calA}}$ has the explicit form as ${\bbeta ^\calA} = {\bbeta ^{( 0 )}} + {\bdelta ^\calA}$, where ${\bdelta ^{\cal A}} = {( {{\bSigma ^{\cal A}}} )^{ - 1}}\sum_{k \in {\cal A} \cup \{ 0 \}} {{\alpha _k}{\bSigma ^{( k )}}{\bdelta ^{( k )}}}$. It implies ${\bbeta ^{\calA}}$ is a pooled version of ${\bbeta ^{( 0 )}}$ and ${\bbeta ^{( k )}}$ for all $k \in \calA$ such that ${\bbeta ^{\cal A}} = {( {{\bSigma ^{\cal A}}} )^{ - 1}}\sum_{k \in {\cal A} \cup \{ 0 \}} {{\alpha _k}{\bSigma ^{( k )}}{\bbeta ^{( k )}}}$.
Define
\begin{align*}
{E_1} = \, & \left\{ {\frac{1}{{{n_{\cal A}} + {n_0}}}{\left\|\sum\limits_{k \in \calA \cup \left\{ 0 \right\}} {{{ {\bX^{(k)^\top}} }}( {\tby^{(k)} - \bX^{(k)}{\bbeta^\mathcal{A} }} )} \right\|_\infty } \le \frac{{{\lambda _1}}}{2}} \right\},\\
E_2 = \, & \left\{ { \frac{1}{{{n_0}}}\left\|{{{\bX^{(0)^\top}} }}\left( {\tby^{(0)} - \bX^{(0)}{\bbeta ^{\left( 0 \right)}}} \right)\right\|_\infty  \le \frac{{{\lambda _2}}}{2}} \right\}.
\end{align*}
We will show the following statements:
\begin{itemize}
  \item (i) Denote ${\hbu^{\cal A}} = \hbbeta ^{\calA} - \bbeta ^{\calA}$. On the event of $E_1$, we have \begin{align*}
  \hbu^{\calA^\top} \hbSigma ^{\cal A} \hbu^{\cal A} \vee {\left\| {\hbu^{\cal A}} \right\|^2} \lesssim \, & {{s_0}\lambda _1^2 + {\lambda _1}C_{\bSigma} ^{\cal A}d}, \\
  \left\| {\hbu^{\cal A}} \right\|_1 \lesssim \, &  {{s_0}{\lambda _1} + C_{\bSigma} ^{\cal A}d} 
  \end{align*}
  with probability at least $1 - \exp ( { - c_{1}{({n_\calA}+ n_0)}} )$.
  \item (ii) Denote ${\hbv^{\cal A}} = \hbdelta ^{\cal A} - \bdelta ^{\cal A}$. On the event of $E_2$, we have
  \begin{align*}
  {\hbu^{\calA^\top}}{{\hbSigma }^{\left( 0 \right)}}{\hbu^\calA} \vee {\left\| {{\hbv^\calA}} \right\|^2} \lesssim \, &  {s_0\lambda _1^2 + {\lambda _1}C_{\bSigma} ^\calA d}, \\
  {{\hbv^{\calA}}^\top}{\hbSigma^{\left( 0 \right)}}{\hbv^\calA} \lesssim \, & {{\lambda _2}C_{\bSigma} ^\calA d},\\
  {\left\| {{{\hbv}^\calA}} \right\|^2} \lesssim \, &  {{\lambda _2}C_{\bSigma} ^\calA d \wedge {{\left( {C_{\bSigma} ^\calA d} \right)}^2}}
  \end{align*}
  with probability at least $1 - 3\exp ( { - c_{1}{n_0}} )$.
\end{itemize}
We will then show that $\P\left( {{E_1} \cap {E_2}} \right) \rightarrow 1$ as $p \rightarrow \infty$ and ${\underline{n}_{\calA}} \rightarrow \infty$, where ${\underline{n}_{\calA}} =\mathop {\min }\limits_{k \in {\cal A} \cup \left\{ 0 \right\}} {n_k}$ is the minimum sample size among informative sources and the target.
Finally, the proof of Theorem~\ref{thm:oracle} is completed by utilizing Lemma~\ref{lem-2} and the fact that
\begin{align*}
\left\| {\hbbeta  - {\bbeta ^{\left( 0 \right)}}} \right\| = \, & \left\| {\left( {{\hbbeta ^\calA} - {\hbdelta^\calA}} \right) - \left( {{\bbeta ^\calA} - {\bdelta ^\calA}} \right)} \right\|\\
 \le \, & \left\| {\hbu^\calA} \right\| + \left\| {\hbv^\calA} \right\|.
\end{align*}

We now prove statement (i). We have the following oracle inequality, the same as (A.1) in the supplementary material of \citet{li2020transfer}. For simplicity of the notation, we denote $S = {S_0}$. By applying Taylor expansion on the loss function in~\eqref{eq:fusion-lasso}, 
\begin{align*}
  \phantom{ =\, } & \frac{1}{2}{\hbu^{\calA^\top}}\hbSigma ^{\cal A}\hbu^{\cal A} \\
  \le \,& {\lambda _1}\left\|\bbeta^{\calA}\right\|_1 - {\lambda _1}\left\|\hbbeta^{\cal A}\right\|_1 + \left|\frac{{{{ {\hbu^{\calA^\top}}}}}}{{\left( {{n_\calA} + {n_0}} \right)}}\sum\limits_{k \in \calA \cup \left\{ 0 \right\}} {{{ {\bX^{(k)^\top}} }}\left( {\tby^{(k)} - \bX^{(k)}\bbeta^{\calA}} \right)}\right|\\
   \le \, & {\lambda _1}\left\|\bbeta^{\calA}\right\|_1 - {\lambda _1}\left\|\hbbeta^{\calA}\right\|_1 + \frac{{{\lambda _1}}}{2}\left\|\hbu^{\calA}\right\|_1\\
 \le \, & \frac{3}{2}{\lambda_1}{\left\| {\hbu_S^{\cal A}} \right\|_1} + {\lambda _1}{\left\| {\bbeta _{{S^c}}^{\cal A}} \right\|_1} - {\lambda _1}{\left\| {\hbbeta _{{S^c}}^{\cal A}} \right\|_1} + \frac{1}{2}{\lambda _1}{\left\| {\hbu_{{S^c}}^{\cal A}} \right\|_1}\\
 \le \, & \frac{3}{2}{\lambda_1}{\left\| {\hbu_S^{\cal A}} \right\|_1} + \frac{3}{2}{\lambda _1}{\left\| {\bbeta _{{S^c}}^{\cal A}} \right\|_1} - \frac{1}{2}{\lambda _1}{\left\| {\hbu_{{S^c}}^{\cal A}} \right\|_1},
\end{align*}
where the third line holds true due to event $E_1$ and H{\"o}lder's inequality, the fourth line holds true due to ${\| {\hbbeta _S^\calA - \bbeta _S^\calA} \|_1} \ge {\| {\hbbeta _S^\calA} \|_1} - {\| {\bbeta _S^\calA} \|_1}$, and the last line holds true due to ${\| {\hbu_{{S^c}}^\calA} \|_1} \le {\| {\hbbeta _{{S^c}}^\calA} \|_1} + {\| {\bbeta _{{S^c}}^\calA} \|_1}$.

  If ${\| {\hbu_S^{\cal A}} \|_1} \ge {\| {\bbeta _{{S^c}}^{\cal A}} \|_1}$, with a positive constant $c_4$, we can further have
  \begin{align*}
{\left\| {{\hbu^\calA}} \right\|^2} - \frac{{\log p}}{{{n_\calA} + {n_0}}}\left\| {{\hbu^\calA}} \right\|_1^2 \lesssim \, & \frac{1}{2}{\hbu^{\calA^\top}}\hbSigma ^{\cal A}\hbu^{\cal A}\\
 \le \, & 3{\lambda _1}{\left\| {\hbu_S^{\cal A}} \right\|_1} - \frac{1}{2}{\lambda _1}{\left\| {\hbu_{{S^c}}^\calA} \right\|_1}\\
 \le \, & 3{\lambda _1}{\left\| {\hbu_S^{\cal A}} \right\|_1}\\
 \le \, & 3{\lambda _1}\sqrt {{s_0}} {\left\| {\hbu_S^\calA} \right\|}\\
 \le \, & 3{\lambda _1}\sqrt {{s_0}} {\left\| {\hbu^\calA} \right\|},
\end{align*}
where the first line holds true with probability at least $1 - \exp ( { - {c_{1}}{({n_\calA}+ n_0)}} )$ due to Lemma~\ref{lem-2} and Assumption~\ref{assum:A4}, and the fourth line holds true due to $\ell_1$-$\ell_2$ inequality. It further leads to 
  \begin{align}
  {\left\| {\hbu^\calA} \right\|^2} \lesssim \, &  {s_0\lambda _1^2} \label{eq:inter-1}, \\ 
  {\left\| {\hbu^\calA} \right\|_1} \lesssim \, & {s_0{\lambda _1}} \label{eq:inter-2}
  \end{align}
  by utilizing ${s_0}\log p/( {{n_\calA} + {n_0}} ) = o( 1 )$ derived from Assumption~\ref{assum:A3}.

  On the other hand, if ${\| {\hbu_S^{\cal A}} \|_1} \le {\| {\bbeta _{{S^c}}^{\cal A}} \|_1}$, we have
  \begin{equation*}
  0 \le \frac{1}{2}{\hbu^{\calA^\top}}{\hbSigma^{\cal A}}{\hbu^{\cal A}} \le 3{\lambda _1}{\left\| {\bbeta _{{S^c}}^{\cal A}} \right\|_1} - \frac{1}{2}{\lambda _1}{\left\| {\hbu_{{S^c}}^{\cal A}} \right\|_1}.
  \end{equation*}
It follows that 
  \begin{equation*}
  {\left\| {\hbu_{{S^c}}^{\cal A}} \right\|_1} \le 6{\left\| {\bbeta _{{S^c}}^{\cal A}} \right\|_1}, 
  \end{equation*}
and
  \begin{equation}
  \label{eq:inter-3}
  {\left\| {{\hbu^\calA}} \right\|_1} = {\left\| {\hbu_S^\calA} \right\|_1} + {\left\| {\hbu_{{S^c}}^\calA} \right\|_1} \le 7{\left\| {\bbeta _{{S^c}}^\calA} \right\|_1} \le 7{\left\| {\bdelta _{{S^c}}^\calA} \right\|_1} \le 7C_{\bSigma}^\calA d,
  \end{equation}
  where the last step is due to the definition of $C_{\bSigma}^\calA $ in Assumption~\ref{assum:A2}. This implies ${\| {\hbu^\calA} \|} \le 7C_{\bSigma} ^\calA d$ by the $\ell_1$-$\ell_2$ inequality. By Lemma~\ref{lem-2}, we have 
  \begin{align*}
{\left\| {\hbu^\calA} \right\|^2} - \frac{{\log p}}{{{n_\calA} + {n_0}}}\left\| {\hbu^\calA} \right\|_1^2 \lesssim \, & \frac{1}{2}{\hbu^{\calA^\top}}{\hbSigma^{\cal A}}{\hbu^{\cal A}}\\
 \le \, & 3{\lambda _1}{\left\| {\bbeta _{S^c}^\calA} \right\|_1} - \frac{1}{2}{\lambda _1}{\left\| {\hbu_{S^c}^\calA} \right\|_1}\\
 \le \, & 3{\lambda _1}{\left\| {\bbeta _{S^c}^\calA} \right\|_1}\\
 \le \, & 3{\lambda _1}C_{\bSigma} ^\calA d
  \end{align*}
  with probability at least $1 - \exp ( { - {c_{1}}{({n_\calA}+ n_0)}} )$. 
  Noticing that $C_{\bSigma} ^\calA d\sqrt {\log p/( {{n_A} + {n_0}} )}  = o( 1 )$ as derived from Assumption~\ref{assum:A3}, we get 
  \begin{equation}
  \label{eq:inter-4}
  {\left\| {\hbu^\calA} \right\|^2} \lesssim {{\lambda _1}C_{\bSigma} ^\calA d}.
  \end{equation}
Statement (i) is then established by combining results from~\eqref{eq:inter-1},~\eqref{eq:inter-2},~\eqref{eq:inter-3},~\eqref{eq:inter-4} and Lemma~\ref{lem-2}.

Next, we prove statement (ii). Under the event $E_2$, we have the following oracle inequality, the same as the one from the proofs of Theorems 1 and 4 in \citet{li2020transfer}. By applying the Taylor expansion on the loss in~\eqref{eq:debias}, 
  \begin{align*}
  \phantom{=\,}&\frac{1}{2}{ {\hbv^{\calA^\top}} }{\hbSigma^{\left( 0 \right)}}\hbv^{\calA} \\
  \le \, &{\lambda _2}\left\|{\bdelta^{\calA}}\right\|_1 - {\lambda _2}\left\|\hbdelta^{\calA}\right\|_1 + \left|\frac{{{{ {\hbv^{\calA^\top}} }}}}{{{n_0}}}\bX^{(0)^\top}\left\{ {\tby^{(0)} - \bX^{(0)}\left( {\hbbeta^{\calA} - {\bdelta^{\calA}}} \right)} \right\}\right|\\
  = \, &{\lambda _2}\left\|{\bdelta^{\calA}}\right\|_1 - {\lambda _2}\left\|\hbdelta^{\calA}\right\|_1 + \left|\frac{{{{{\hbv^{\calA^\top}}}}}}{{{n_0}}}\bX^{(0)^\top}\left\{ {\tby^{(0)} - \bX^{(0)}{\bbeta ^{\left( 0 \right)}} - \bX^{(0)}\left( {\hbbeta^{\calA} - {\bdelta^{\calA}} - {\bbeta ^{\left( 0 \right)}}} \right)} \right\}\right|\\
  = \, &{\lambda _2}\left\|{\bdelta^{\calA}}\right\|_1 - {\lambda _2}\left\|\hbdelta^{\calA}\right\|_1 + \left|\frac{{{{{\hbv^{\calA^\top}}}}}}{{{n_0}}}\bX^{(0)^\top}\left\{ {\left( {\tby^{(0)} - \bX^{(0)}{\bbeta ^{\left( 0 \right)}}} \right) - \bX^{(0)}\hbu^{\calA}} \right\}\right|\\
  \le \,& {\lambda _2}\left\|{\bdelta^{\calA}}\right\|_1 - {\lambda _2}\left\|\hbdelta^{\calA}\right\|_1 + \frac{{{\lambda _2}}}{2}\left\|\hbv^{\calA}\right\|_1 + { {\hbu^{\calA^\top}} }{\hbSigma^{\left( 0 \right)}}\hbu^{\calA} + \frac{1}{4}{ {\hbv^{\calA^\top}} }{\hbSigma^{\left( 0 \right)}}\hbv^{\calA},
  \end{align*}
where the last step is due to the event $E_2$ and the inequality
  $|{ab}|\le {a^2} + {b^2}/4$. Hence, by noticing ${\| {\hbv^\calA} \|_1} = {\| {{\hbdelta ^\calA} - {\bdelta ^\calA}} \|_1} \ge \| {\hbdelta ^\calA} \|_1 - {\| {\bdelta ^\calA} \|_1}$, we have
  \begin{equation*}
  \frac{1}{4}\hbv^{\calA^\top}{\hbSigma^{\left( 0 \right)}}\hbv^{\calA} \le {2\lambda _2}\left\|{\bdelta^{\calA}}\right\|_1 - \frac{{{\lambda _2}}}{2}\left\|\hbv^{\calA}\right\|_1 + { {\hbu^{\calA^\top}} }{\hbSigma^{\left( 0 \right)}}\hbu^{\calA}.
  \end{equation*}

  If ${ {\hbu^{\calA^\top}} }{\hbSigma^{( 0 )}}\hbu^{\calA} \le {\lambda _2}\|{\bdelta^{\calA}}\|_1$, then
  \begin{align}
  0 \le \frac{1}{4}\hbv^{\calA^\top}{\hbSigma^{\left( 0 \right)}}\hbv^{\calA} \le \, & {3\lambda _2}\left\|{\bdelta^{\calA}}\right\|_1 - \frac{{{\lambda _2}}}{2}\left\|\hbv^{\calA}\right\|_1 \nonumber \\
  \le \, & {3\lambda _2}\left\|{\bdelta^{\calA}}\right\|_1  \nonumber \\
  \le \, & 3{\lambda _2}C_{\bSigma} ^\calA d, \label{eq:inter-5}
  \end{align}
  which further leads to ${\| {\hbv^\calA} \|_1} \le 6{\| {\bdelta ^\calA} \|_1} \le 6C_{\bSigma} ^\calA d$ and 
  \begin{equation}
  \label{eq:inter-6}
  \left\| {\hbv^\calA} \right\| \le  \left\| {\hbv^\calA} \right\|_1 \le 6C_{\bSigma} ^\calA d.
\end{equation}
Meanwhile, from Lemma~\ref{lem-2}, we have 
  \begin{align*}
  {\left\| {\hbv^\calA} \right\|^2} - \frac{{\log p}}{{{n_0}}}\left\| {\hbv^\calA} \right\|_1^2 \lesssim \, & \frac{1}{4}\hbv^{\calA^\top}{\hbSigma^{\left( 0 \right)}}\hbv^{\calA}\\
  \le \, & {3\lambda _2}\left\|{\bdelta^{\calA}}\right\|_1 \\
  \le \, & 3{\lambda _2}C_{\bSigma} ^\calA d,
  \end{align*}
  with probability at least $1-\exp(-c_{1} n_0)$, and then
  \begin{equation}
  \label{eq:inter-7}
  {\left\| {\hbv^\calA} \right\|^2} \lesssim {\lambda _2}C_{\bSigma} ^\calA d 
  \end{equation}
  by noticing ${C_{\bSigma}^{\cal A}}d{{( {\log ( {p \vee {n_0}} )/{n_0}} )}^{1/2}} = o( 1 )$ as derived from Assumption~\ref{assum:A3}. 

On the other hand, if ${ {\hbu^{\calA^\top}} }{\hbSigma^{( 0 )}}\hbu^{\calA} \ge {\lambda _2}\|{\bdelta^{\calA}}\|_1$, we have
  \begin{equation*}
  0 \le \frac{1}{4}\hbv^{\calA^\top}{\hbSigma^{\left( 0 \right)}}\hbv^{\calA} \le { 3{\hbu^{\calA^\top}} }{\hbSigma^{\left( 0 \right)}}\hbu^{\calA} - \frac{{{\lambda _2}}}{2}\left\|\hbv^{\calA}\right\|_1.
  \end{equation*}
It follows that 
  \begin{align}
{\left\| {{{\hbv}^\calA}} \right\|_1} \le \, & \frac{6}{{{\lambda _2}}}{\hbu^{\calA^\top}}{\hbSigma^{\left( 0 \right)}}{{\hbu}^\calA} \nonumber \\
 \lesssim \, &  {\frac{{\left\| {{\hbu^\calA}} \right\|}^2}{{{\lambda _2}}}} \nonumber \\
 \lesssim \, & {\frac{{{s_0}\lambda _1^2 + {\lambda _1}C_{\bSigma} ^\calA d}}{{{\lambda _2}}}} \label{eq:inter-8}
  \end{align}
with probability at least $1- 2\exp(-c_{1}  n_0)$, where the second line holds true with probability at least $1-\exp(-c_{1}n_0)$ due to Theorem 1.6 of \citet{zhou2009restricted}, and the third line holds true with probability at least $1-\exp(-c_{1}(n_{\calA}+n_0))$ due to~\eqref{eq:inter-1} and~\eqref{eq:inter-4}. By Lemma~\ref{lem-2} and Theorem 1.6 of \citet{zhou2009restricted}, once again, we have
  \begin{align*}
  {\left\| {\hbv^\calA} \right\|^2} - \frac{{\log p}}{{{n_0}}}\left\| {\hbv^\calA} \right\|_1^2 \lesssim \, & \frac{1}{4}\hbv^{\calA^\top}{\hbSigma^{\left( 0 \right)}}\hbv^{\calA}\\
  \le \, & { 3{\hbu^{\calA^\top}} }{\hbSigma^{\left( 0 \right)}}\hbu^{\calA} \\
  \lesssim \, & {{\left\| {{\hbu^\calA}} \right\|}^2} \\
  \lesssim \, & {{{s_0}\lambda _1^2 + {\lambda _1}C_{\bSigma} ^\calA d}}
  \end{align*}
with probability at least $1-3\exp(-{c_1} n_0)$. Thus, from~\eqref{eq:inter-8} that shows $\| {{\hbv^\calA}} \|_1^2\log p/{n_0} \le {( {{\lambda _2}{{\| {{\hbv^\calA}} \|}_1}} )^2} = o( 1 )$, we have
  \begin{equation}
  \label{eq:inter-9}
  {\hbu^{\calA^\top}}{\hbSigma^{\left( 0 \right)}}{\hbu^{\calA}} \vee {\left\| {{\hbv^\calA}} \right\|^2} \lesssim {{s_0}\lambda _1^2 + {\lambda _1}C_{\bSigma} ^\calA d}.
  \end{equation}
Statement (ii) is established by combining results from~\eqref{eq:inter-5},~\eqref{eq:inter-6},~\eqref{eq:inter-7},~\eqref{eq:inter-8} and~\eqref{eq:inter-9}.

We now prove that $\P( {{E_1} \cap {E_2}} ) \ge 1 - c_1{({\underline{n}_{\calA}} )}^{-c_2 \underline{s}_{\calA}} - c_3\exp(-c_4{\underline{n}_{\calA}}) - \exp(-c_5\log p)$ for some positive constants $c_1$ to $c_5$. We utilize the inequality $\P( {{E_1} \cap {E_2}}) \ge \P( {{E_1}} ) + \P( {{E_2}} ) - 1$ and then bound $\P( {{E_1}} )$ and $\P( {{E_2}} )$ separately. For $\P( {{E_2}})$, we have $\P( {{E_2}}) \ge 1 - c_1n_0^{ - c_2 s_0} - c_3\exp (-c_4 n_0)$ by Lemma~\ref{lem-1}. For $\P( {{E_1}} )$, we bound it by the following decomposition:
  \begin{align*}
  \phantom{=\,}&\frac{1}{{{n_{\cal A}} + {n_0}}}{\left\|\sum\limits_{k \in \calA \cup \left\{ 0 \right\}} {{{ {\bX^{(k)^\top}} }}( {\tby^{(k)} - \bX^{(k)}{\bbeta^\mathcal{A} }} )} \right\|_\infty }\\
  = \, & {\left\|\frac{1}{{{n_{\cal A}} + {n_0}}}\sum\limits_{k \in \calA \cup \left\{ 0 \right\}} {{{{\bX^{(k)^\top}} }}\tby^{(k)}}  - \sum\limits_{k \in \calA \cup \left\{ 0 \right\}} {{\alpha _k}{\hbSigma^{( k )}}{\bbeta ^{( k )}}}  + \sum\limits_{k \in \calA \cup \left\{ 0 \right\}} {{\alpha _k}{\hbSigma^{( k )}}{\bbeta ^{( k )}}}  - {\hbSigma^{\cal A}}\bbeta^{\calA}\right\|_\infty }\\
  = \, & {\left\|\sum\limits_{k \in {\cal A} \cup \left\{ 0 \right\}} {{\alpha _k}\left( \frac{1}{{{n_k}}}{{{ {\bX^{(k)^\top}} }}\tby^{(k)} - {\hbSigma^{( k )}}{\bbeta ^{( k )}}} \right)}  + \sum\limits_{k \in \calA \cup \left\{ 0 \right\}} {{\alpha _k}{\left( \hbSigma^{( k )} - \bSigma^{( k )} \right) }{\bdelta ^{( k )}}}  - ( {{\hbSigma^{\cal A}} - {{\bSigma} ^{\cal A}}} )\bdelta^{\calA}\right\|_\infty }\\
  \le \, & \left\|\sum\limits_{k \in {\cal A} \cup \left\{ 0 \right\}} {{\alpha _k}\left( \frac{1}{{{n_k}}}{{{ {\bX^{(k)^\top}} }}\tby^{(k)} - {\hbSigma^{( k )}}{\bbeta ^{( k )}}} \right)} \right\|_\infty + \left\|\sum\limits_{k \in \calA \cup \left\{ 0 \right\}} {{\alpha _k}{\left( \hbSigma^{( k )} - \bSigma^{( k )} \right) }{\bdelta ^{( k )}}}\right\|_\infty \\
  \phantom{=\,} & + \left\| \left({{\hbSigma^{\cal A}} - {{\bSigma} ^{\cal A}}} \right)\bdelta^{\calA}\right\|_\infty \\
  \defas \, &  {T_1} + {T_2} + {T_3}\,,
  \end{align*}
  where the third line holds true due to ${\bbeta ^{( k )}} = {\bbeta ^{( 0 )}} + {\bdelta ^{( k )}}$ and ${\bbeta ^\calA} = {\bbeta ^{( 0 )}} + {\bdelta ^\calA}$. For $T_1$, based on Lemma~\ref{lem-1}, we have
  \begin{align*}
  {\alpha _k}\left\|\frac{1}{{{n_k}}}{{{\bX^{(k)^\top}} }}\tby^{(k)} - {\hbSigma^{( k )}}{\bbeta ^{( k )}}\right\|_\infty  \lesssim \, &  \frac{{{n_k}}}{{{n_{\cal A}} + {n_0}}}\left( {\sqrt {\frac{{\log \left( p \vee {n_k} \right)}}{{{n_k}}}}  + \frac{{{s_k}\log\left( p \vee {n_k} \right)}}{{{n_k}}}} \right)\\
 \lesssim \, &{\frac{{\sqrt {{n_k}\log \left( p \vee {n_k} \right)} }}{{{n_{\cal A}} + {n_0}}} + \frac{{{s_k}\log \left( p \vee {n_k} \right)}}{{{n_{\cal A}} + {n_0}}}} \\
 \lesssim \, &{\frac{{\sqrt {{n_k}\log \left( p \vee {n_k} \right)} }}{{{n_{\cal A}} + {n_0}}}} \\
 \lesssim \, & {\sqrt {\frac{{\log \left( p \vee {n_k} \right)}}{{{n_{\cal A}} + {n_0}}}} }
  \end{align*}
  for any $k \in \mathcal{A} \cup \{ 0 \}$ with probability at least $1 - c_1n_k^{ - c_2 s_k} - c_3\exp(-c_4 n_k)$, where the third line holds true due to ${s_k} = O( {n_k^r} )$ with $0 < r < 1/3$ from Assumption~\ref{assum:A1}. Hence, considering that $| \cal A |$ is bounded, 
  we have
  \begin{align}
  {T_1} \lesssim \, &  \mathop {\max }\limits_{k \in {\cal A} \cup \left\{ 0 \right\}} \sqrt {\frac{{\log \left( p \vee {n_k} \right)}}{{{n_{\cal A}} + {n_0}}}} \nonumber \\
  \lesssim \, & \sqrt {\frac{{\log \left( p \vee {{n_{\calA}}^*} \right)}}{{{n_{\cal A}} + {n_0}}}} \label{eq:E1_T1}
  \end{align} 
  with probability at least $1 - c_1{{\underline{n}_{\calA}} }^{-c_2 \underline{s}_{\calA}} - c_3\exp(-c_4 {\underline{n}_{\calA}} )$. For $T_2$ and $T_3$, similar with proofs of Theorems 1 and 4 of \citet{li2020transfer}, with a positive constant $c_5$, we have 
  \begin{align*}
\phantom{=\,}&\P\left( {{{\left\| {\sum\limits_{k \in {\calA} \cup \left\{ 0 \right\}} {{\alpha _k}\left( {{\hbSigma ^{\left( k \right)}} - {\bSigma ^{\left( k \right)}}} \right){\bdelta ^{\left( k \right)}}} } \right\|}_\infty } \ge t} \right)\\
 \le \, & \P\left( {\frac{1}{{{n_{\calA}} + {n_0}}}{{\left\| {\sum\limits_{k \in {\calA} \cup \left\{ 0 \right\}} {\left( {{\bX^{\left( k \right)^\top}}{\bX^{\left( k \right)}} - E\left[ {{\bX^{\left( k \right)^\top}}{\bX^{\left( k \right)}}} \right]} \right){\bdelta ^{\left( k \right)}}} } \right\|}_\infty } \ge t} \right)\\
 \le \, & p \cdot \P\left( {\mathop {\max }\limits_{j \le p} \frac{1}{{{n_{\calA}} + {n_0}}}\left| {{{\left\{ {\sum\limits_{k \in {\calA} \cup \left\{ 0 \right\}} {\left( {{\bX^{\left( k \right)^\top}}{\bX^{\left( k \right)}} - E\left[ {{\bX^{\left( k \right)^\top}}{\bX^{\left( k \right)}}} \right]} \right){\bdelta ^{\left( k \right)}}} } \right\}}_j}} \right| \ge t} \right)\\
 \le \, & p\mathop {\max }\limits_{j \le p} \exp \left( { - c_{4}\min \left\{ {\frac{{\left( {{n_\calA} + {n_0}} \right){t^2}}}{{2\mathop {\max }\limits_{k \in {\calA} \cup \left\{ 0 \right\}} {\bdelta ^{\left( k \right)^\top}}{\bSigma ^{\left( k \right)}}{\bdelta ^{\left( k \right)}}}},\frac{{\left( {{n_{\calA}} + {n_0}} \right)t}}{{\mathop {\max }\limits_{k \in {\calA} \cup \left\{ 0 \right\}} \sqrt {{\bdelta ^{\left( k \right)^\top}}{\bSigma ^{\left( k \right)}}{\bdelta ^{\left( k \right)}}} }}} \right\}} \right)\\
 = \, & p\mathop {\max }\limits_{j \le p,k \in {\calA} \cup \left\{ 0 \right\}} \exp \left( { - c_{4}\min \left\{ {\frac{{\left( {{n_{\calA}} + {n_0}} \right){t^2}}}{{2{\bdelta ^{\left( k \right)^\top}}{\bSigma ^{\left( k \right)}}{\bdelta ^{\left( k \right)}}}},\frac{{\left( {{n_{\calA}} + {n_0}} \right)t}}{{\sqrt {{\bdelta ^{\left( k \right)^\top}}{\bSigma ^{\left( k \right)}}{\bdelta ^{\left( k \right)}}} }}} \right\}} \right), 
  \end{align*}
  where the fourth line holds true by combining Lemma~\ref{lem-3} and the fact that $x_{i,j}^{( k )}\bx{_i^{{( k )}^\top}}{\bdelta ^{( k )}}$ is sub-exponential. Hence, applying similar procedure on $T_3$ with $T_2$, by taking $t \gtrsim \mathop {\max }\limits_{k \in {\calA} \cup \{ 0 \}} \sqrt {{\bdelta ^{( k )^\top}}{\bSigma ^{( k )}}{\bdelta ^{( k )}}\log p/( {{n_\calA} + {n_0}} )}$ and noticing that $\log p  = o( {n_{\calA} + {n_0}} )$, we have
  \begin{equation}
  \label{eq:E1_T23}
  \P\left( {\max \left( {{T_2},{T_3}} \right) \gtrsim \mathop {\max }\limits_{k \in {\cal A} \cup \left\{ 0 \right\}} \sqrt {{{ {{\bdelta ^{( k )^\top}}} }}{{\bSigma} ^{( k )}}{\bdelta ^{( k )}}\log p/( {{n_{\cal A}} + {n_0}} )} } \right) \le \exp\left( -c_{4}\log p \right).
  \end{equation}

  Combining~\eqref{eq:E1_T1},~\eqref{eq:E1_T23}, we have 
  \begin{equation*}
  \P\left( {{E_1}} \right) \ge 1 - c_1{\underline{n}_{\calA}}^{-c_2 \underline{s}_{\calA}} - c_3\exp\left( -c_{4} {\underline{n}_{\calA}}\right) - \exp\left( -c_{5}\log p \right),
  \end{equation*}
   which further leads to the desired conclusion that $\P( {{E_1} \cap {E_2}} ) \rightarrow 1$ as $p \rightarrow \infty$ and ${\underline{n}_{\calA}} \rightarrow \infty$. 

\subsection{Proofs of Theorem~\ref{thm:detect}}

The proof of Theorem~\ref{thm:detect} follows the same outline as the proof of Theorem 4 in \citet{tian2022transfer}.

We present several lemmas first. Recall that $Q_0$ is defined in~\eqref{eq:Q0}, ${\widehat Q}_0$ is defined in~\eqref{eq:emprical-loss}, and $\hbbeta ^{( 0,k )}$ is defined in Step 1 of informative set detection procedure. We define ${\bbeta ^{( {0,k} )}} = {\bbeta ^{( 0 )}} + {\bdelta ^{( {0,k} )}}$ where
\begin{equation*}
{\bdelta ^{\left( {0,k} \right)}} = {\left( {\frac{{{n_0}}}{{{n_0} + {n_k}}}{\bSigma ^{\left( 0 \right)}} + \frac{{{n_k}}}{{{n_0} + {n_k}}}{\bSigma ^{\left( k \right)}}} \right)^{ - 1}}\sum\limits_{j = 0,k} {\frac{{{n_j}}}{{{n_0} + {n_k}}}} {\bSigma ^{\left( j \right)}}{\bdelta ^{\left( j \right)}}.
\end{equation*} 
Hence, we have
\begin{equation*}
{\bbeta ^{\left( {0,k} \right)}} = {\left( {\frac{{{n_0}}}{{{n_0} + {n_k}}}{\bSigma ^{\left( 0 \right)}} + \frac{{{n_k}}}{{{n_0} + {n_k}}}{\bSigma ^{\left( k \right)}}} \right)^{ - 1}}\sum\limits_{j = 0,k} {\frac{{{n_j}}}{{{n_0} + {n_k}}}} {\bSigma ^{\left( j \right)}}{\bbeta ^{\left( j \right)}},
\end{equation*}
which indicates that ${\bbeta ^{( {0,k} )}}$ is a linear transform of ${\bbeta ^{( {0} )}}$ and ${\bbeta ^{( {k} )}}$.

\begin{lemma}
  \label{lem-4}
  Under Assumption~\ref{assum:A2}, we have
  \begin{equation*}
  \sup_{k \in {\cal A}} {Q_0}\left( {{\bbeta ^{\left( 0,k \right)}}} \right) - {Q_0}\left( {{\bbeta ^{\left( 0 \right)}}} \right)=O\left(  \sup_{k \in {\cal A}} \left\|{\bbeta ^{\left( 0,k \right)}} - {\bbeta ^{\left( 0 \right)}}\right\|^2\right)=O\left( {d^2}\right).
  \end{equation*}
\end{lemma}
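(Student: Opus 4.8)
The plan is to exploit the fact that $Q_0$ is a quadratic functional minimized \emph{exactly} at $\bbeta^{(0)}$, so that the loss gap collapses to a pure quadratic form, and then to control the displacement $\bbeta^{(0,k)}-\bbeta^{(0)}$ through its closed-form expression together with the spectral bounds in Assumption~\ref{assum:A2}. The statement is really two claims chained together, and I would establish them in that order.

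First I would show that $\bbeta^{(0)}=\argmin_{\bbeta\in\R^p}Q_0(\bbeta)$, equivalently that the linear term of $Q_0$ at $\bbeta^{(0)}$ vanishes. Set $R_0 = \tby^{(0)}-\bx^{(0)\top}\bbeta^{(0)} = -\{f^{(0)}(0)\}^{-1}\{I(\eps^{(0)}\le 0)-\tau\}$. Since $\P(\eps^{(0)}\le 0)=\tau$ we have $\E[R_0]=0$, and since $\eps^{(0)}$ is independent of $\bx^{(0)}$ we have $\E[R_0\bx^{(0)}]=0$. Writing $\tby^{(0)}-\bx^{(0)\top}\bbeta = R_0 + \bx^{(0)\top}(\bbeta^{(0)}-\bbeta)$ and squaring,
\begin{equation*}
Q_0(\bbeta) = \E[R_0^2] + 2\,\E[R_0\,\bx^{(0)\top}](\bbeta^{(0)}-\bbeta) + (\bbeta-\bbeta^{(0)})^\top\bSigma^{(0)}(\bbeta-\bbeta^{(0)}),
\end{equation*}
and the cross term is zero. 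Hence $Q_0(\bbeta)-Q_0(\bbeta^{(0)}) = (\bbeta-\bbeta^{(0)})^\top\bSigma^{(0)}(\bbeta-\bbeta^{(0)})$ for every $\bbeta$. Taking $\bbeta=\bbeta^{(0,k)}$ and invoking the upper eigenvalue bound on $\bSigma^{(0)}$ from Assumption~\ref{assum:A2} gives $Q_0(\bbeta^{(0,k)})-Q_0(\bbeta^{(0)})\le \opnorm{\bSigma^{(0)}}\,\|\bbeta^{(0,k)}-\bbeta^{(0)}\|^2$, and taking the supremum over $k\in\calA$ yields the first equality.

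Second, I would bound $\|\bbeta^{(0,k)}-\bbeta^{(0)}\|=\|\bdelta^{(0,k)}\|$ uniformly over $k\in\calA$. Because $\bdelta^{(0)}=0$, the defining expression for $\bdelta^{(0,k)}$ collapses to $\bdelta^{(0,k)} = M_k^{-1}\,w_k\,\bSigma^{(k)}\bdelta^{(k)}$, where $M_k = (1-w_k)\bSigma^{(0)} + w_k\bSigma^{(k)}$ and $w_k = n_k/(n_0+n_k)\le 1$. The matrix $M_k$ is a convex combination of matrices whose eigenvalues lie in a fixed compact subinterval of $(0,\infty)$ (Assumption~\ref{assum:A2}), so $\opnorm{M_k^{-1}}$ and $\opnorm{\bSigma^{(k)}}$ are bounded by numerical constants uniformly in $k$; moreover $\|\bdelta^{(k)}\|\le\|\bdelta^{(k)}\|_1\le d$ for $k\in\calA$. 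Therefore $\|\bdelta^{(0,k)}\|\lesssim d$ uniformly, so $\sup_{k\in\calA}\|\bbeta^{(0,k)}-\bbeta^{(0)}\|^2 = O(d^2)$; combining with the previous paragraph completes the argument.

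Both displays above are routine; the one step that actually carries the proof is the identification of $\bbeta^{(0)}$ as the population least-squares target of $Q_0$, which is precisely what the surrogate construction of Step~2 was designed to guarantee — once the loss gap is written as the exact quadratic form $(\bbeta-\bbeta^{(0)})^\top\bSigma^{(0)}(\bbeta-\bbeta^{(0)})$, everything else is bookkeeping with the operator-norm bounds of Assumption~\ref{assum:A2}.
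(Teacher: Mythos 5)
Your proof is correct and follows essentially the same route as the paper's: both arguments reduce the loss gap to the exact quadratic form $(\bbeta^{(0,k)}-\bbeta^{(0)})^\top\bSigma^{(0)}(\bbeta^{(0,k)}-\bbeta^{(0)})$ via the orthogonality $\E[\bx^{(0)}(\tby^{(0)}-\bx^{(0)\top}\bbeta^{(0)})]=0$ (the paper through the mean value theorem, you through direct expansion of the square), and then bound $\|\bbeta^{(0,k)}-\bbeta^{(0)}\|$ by $O(d)$ using the closed form of $\bdelta^{(0,k)}$ and the eigenvalue bounds of Assumption~\ref{assum:A2}. If anything, your write-up is slightly more complete, since you verify the orthogonality from $\P(\eps^{(0)}\le 0)=\tau$ and independence, and you spell out the operator-norm bookkeeping that the paper leaves implicit.
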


\begin{proof}[Proof of Lemma~\ref{lem-4}]
  From the fact that 
  \begin{equation*}
  {Q_0}'\left( {{\bbeta ^{\left( 0 \right)}}} \right) = E\left[ {2{\bX^{\left( 0 \right)}}^\top\left( {{{\tby}^{\left( 0 \right)}} - {\bX^{\left( 0 \right)}}{\bbeta ^{\left( 0 \right)}}} \right)} \right] = 0, 
  \end{equation*}
  for any given $t \in ( {0,1} )$, we have
  \begin{align*}
\phantom{=\,}&{Q_0}'\left( {{\bbeta ^{\left( 0 \right)}} + t\left( {{\bbeta ^{\left( 0,k \right)}} - {\bbeta ^{\left( 0 \right)}}} \right)} \right)\\
 = \, &E\left[ {2{\bX^{\left( 0 \right)}}^\top\left\{ {{{\tby}^{\left( 0 \right)}} - {\bX^{\left( 0 \right)}}\left( {{\bbeta ^{\left( 0 \right)}} + t\left( {{\bbeta ^{\left( 0,k \right)}} - {\bbeta ^{\left( 0 \right)}}} \right)} \right)} \right\}} \right]\\
 = \, &2tE\left[ {{\bX^{\left( 0 \right)}}^\top{\bX^{\left( 0 \right)}}} \right]\left( {{\bbeta ^{\left( 0,k \right)}} - {\bbeta ^{\left( 0 \right)}}} \right)\\
 = \, &2t{\bSigma ^{\left( 0 \right)}}\left( {{\bbeta ^{\left( 0,k \right)}} - {\bbeta ^{\left( 0 \right)}}} \right).
  \end{align*}
  Then, by the mean value theorem, we have
  \begin{equation*}
  {Q_0}\left( {{\bbeta ^{\left( 0,k \right)}}} \right) - {Q_0}\left( {{\bbeta ^{\left( 0 \right)}}} \right) = 2t{\left( {{\bbeta ^{\left(0,k \right)}} - {\bbeta ^{\left( 0 \right)}}} \right)}^\top{\bSigma ^{\left( 0 \right)}}\left( {{\bbeta ^{\left( 0,k \right)}} - {\bbeta ^{\left( 0 \right)}}} \right) = O\left(  \left\|{\bbeta ^{\left( 0,k \right)}} - {\bbeta ^{\left( 0 \right)}}\right\|^2\right),
\end{equation*}
where the last step is due to Assumption~\ref
   {assum:A2}. The conclusion can then be reached by using the $\ell_1$-$\ell_2$ norm inequality and by noticing that ${\bbeta ^{( 0,k )}}$ is a linear transform of ${\bbeta ^{( 0 )}}$ 
   and ${\bbeta ^{( k )}}$.
\end{proof}

\begin{lemma}
  \label{lem-5}
  Recall ${s^ * } = {s_0} \vee s'$ and ${d^ * } = C_{\bSigma} ^\calA d \vee d'$. Denote
  \begin{align*}
  {\xi _{21}} = \frac{{{{\left( {{s^ * }} \right)}^{3/2}}\log \left( {p \vee {n_0} \vee \underline{n}} \right)}}{{{n_0} + \underline{n}}} + {d^ * }\sqrt {\frac{{{s^ * }\log \left( {p \vee {n_0} \vee \underline{n}} \right)}}{{{n_0} +\underline{n} }}} 
  \end{align*}
  and 
  \begin{align*}
  {\xi _{22}} = {s^ * }\sqrt {\frac{{\log \left( {p \vee {n_0}} \right)}}{{{n_0}}}} .
  \end{align*}
   Under Assumptions~\ref{assum:A1}--\ref{assum:A7} we have
  \begin{align*}
  \phantom{=\,}&\P\left( {\mathop {\sup }\limits_{k \ne 0} \left| {{{\widehat Q}_0}\left( {{\hbbeta^{\left( {0,k} \right)}}} \right) - {{\widehat Q}_0}\left( {{\bbeta ^{\left( {0,k} \right)}}} \right)} \right|  \lesssim {\xi _{21}}} \right) \\
  \ge \, & 1 - c_1\left(\underline{n} \wedge n_0\right)^{ - c_2 \underline{s}} -c_3 \exp \left( { - c_4\left(\underline{n} \wedge n_0\right)} \right) - \exp \left(-c_5 \log p\right)
  \end{align*}
  and 
  \begin{align*}
  \P\left( {\left| {{{\widehat Q}_0}\left( \hbbeta^{\left( 0 \right)}_{\cal I} \right) - {{\widehat Q}_0}\left( {{\bbeta ^{\left( 0 \right)}}} \right)} \right|  \lesssim {\xi _{22}} } \right) \ge 1 - c_1n_0^{ - c_2 s_0 } -c_3 \exp \left( { - c_4n_0} \right) 
  \end{align*}
  with positive constants $c_1$ to $c_5$.
\end{lemma}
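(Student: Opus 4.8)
The plan is to expand the quadratic loss $\widehat Q_0$ around each of its two arguments and reduce the claim to an $\ell_1/\ell_2$ estimation bound for the single-source fusion estimator together with concentration of a surrogate cross-term. Recall from~\eqref{eq:emprical-loss} that $\widehat Q_0(\bbeta)=\tfrac{2}{n_0}\norm{\tby^{(0)}_{{\cal I}^c}-\bX^{(0)}_{{\cal I}^c}\bbeta}^2$, where $\tby^{(0)}_{{\cal I}^c}$ (and the plug-in density $\widehat f^{(0)}(0)$ inside it) is built entirely from the testing half ${\cal I}^c$. Writing $\bu=\hbbeta^{(0,k)}-\bbeta^{(0,k)}$ and $\bv=\tby^{(0)}_{{\cal I}^c}-\bX^{(0)}_{{\cal I}^c}\bbeta^{(0,k)}$ for the population residual at $\bbeta^{(0,k)}$, one obtains $\widehat Q_0(\hbbeta^{(0,k)})-\widehat Q_0(\bbeta^{(0,k)})=\tfrac{2}{n_0}\norm{\bX^{(0)}_{{\cal I}^c}\bu}^2-\tfrac{4}{n_0}\bv^\top\bX^{(0)}_{{\cal I}^c}\bu$, so the error incurred in estimating $\tby^{(0)}_{{\cal I}^c}$ cancels out. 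Since $\hbbeta^{(0,k)}$ and $\bbeta^{(0,k)}$ depend only on ${\cal I}$ and on the $k$th source, they are independent of $(\bX^{(0)}_{{\cal I}^c},\tby^{(0)}_{{\cal I}^c})$, so I would condition on ${\cal I}$ and the source samples throughout.

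First I would bound $\norm{\bu}$ and $\norm{\bu}_1$. By construction $\hbbeta^{(0,k)}$ is exactly the estimator $\hbbeta^{\cal A}$ of~\eqref{eq:fusion-lasso} with $\calA=\{k\}$ and with the target size $n_0$ replaced by $n_0/2$, and $\bbeta^{(0,k)}$ is its probabilistic limit, so statement~(i) in the proof of \Cref{thm:oracle} applies after this substitution. For $k\in\calA$ the condition $\norm{\bbeta^{(k)}-\bbeta^{(0)}}_1\le d$ supplies the similarity term through $C_{\bSigma}^{\calA}d$; for $k\in\calA^c$ the weak-sparsity \Cref{assum:A7} plays that role, with effective sparsity $s'$ and $\ell_1$ tail $d'$ replacing $s_0$ and $C_{\bSigma}^{\calA}d$, and (enlarging the index set) one gets a set $S_k^*\supseteq S_0$ with $\lvert S_k^*\rvert\le s^*$ and $\norm{\bbeta^{(0,k)}_{(S_k^*)^c}}_1\lesssim d^*$. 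With $s^*=s_0\vee s'$, $d^*=C_{\bSigma}^{\calA}d\vee d'$ and $\lambda_1\asymp\sqrt{\log(p\vee n_0\vee\underline{n})/(n_0+\underline{n})}$ this yields, uniformly in $k$, $\norm{\bu}^2\lesssim s^*\lambda_1^2+\lambda_1 d^*$, $\norm{\bu}_1\lesssim s^*\lambda_1+d^*$, and the cone bound $\norm{\bu_{(S_k^*)^c}}_1\lesssim\sqrt{s^*}\,\norm{\bu}+d^*$; note that then $\xi_{21}\asymp\sqrt{s^*}\,(s^*\lambda_1^2+\lambda_1 d^*)$.

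For the two terms in the expansion: the quadratic term is controlled by the restricted-eigenvalue inequality of \Cref{lem-2} for $\bX^{(0)}_{{\cal I}^c}$ (which has $n_0/2$ rows), giving $\tfrac{2}{n_0}\norm{\bX^{(0)}_{{\cal I}^c}\bu}^2\lesssim\norm{\bu}^2+(\log p/n_0)\norm{\bu}_1^2\lesssim\xi_{21}$, the second summand being lower order by \Cref{assum:A1} and \Cref{assum:A3}. For the cross term the key step is to apply \Cref{lem-1} with $k\to0$ and $n_k\to n_0/2$ (the surrogate there is the one used in $\widehat Q_0$, and \Cref{assum:A4} delivers the support recovery of $\hbbeta^{(0)}_{{\cal I}^c}$ that \Cref{lem-1} needs), which rewrites $\tfrac{2}{n_0}\bX^{(0)\top}_{{\cal I}^c}\bv=-\tfrac{2}{n_0}\bX^{(0)\top}_{{\cal I}^c}\bX^{(0)}_{{\cal I}^c}\bdelta^{(0,k)}+\mathbold{r}$ with $\norm{\mathbold{r}}_\infty\lesssim\sqrt{\log(p\vee n_0)/n_0}$, so $\tfrac{4}{n_0}\bv^\top\bX^{(0)}_{{\cal I}^c}\bu=-\tfrac{4}{n_0}\bdelta^{(0,k)\top}\bX^{(0)\top}_{{\cal I}^c}\bX^{(0)}_{{\cal I}^c}\bu+2\mathbold{r}^\top\bu$. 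The $\mathbold{r}^\top\bu$ piece is $\lesssim\norm{\mathbold{r}}_\infty\norm{\bu}_1$; for the leading piece I would split $\bdelta^{(0,k)}=\bdelta^{(0,k)}_{S_k^*}+\bdelta^{(0,k)}_{(S_k^*)^c}$, bounding the tail contribution by $\norm{\bdelta^{(0,k)}_{(S_k^*)^c}}_1\,\norm{\tfrac{2}{n_0}\bX^{(0)\top}_{{\cal I}^c}\bX^{(0)}_{{\cal I}^c}\bu}_\infty$ (where $\norm{\tfrac{2}{n_0}\bX^{(0)\top}_{{\cal I}^c}\bX^{(0)}_{{\cal I}^c}\bu}_\infty\lesssim\norm{\bu}$ up to lower order, using that rows of $\bSigma^{(0)}$ have bounded $\ell_2$-norm under \Cref{assum:A2} and a Bernstein bound, \Cref{lem-3}, for the fluctuation of $\tfrac2{n_0}\bX^{(0)\top}_{{\cal I}^c}\bX^{(0)}_{{\cal I}^c}-\bSigma^{(0)}$ against $\bu$, exactly as for $T_2,T_3$ in the proof of \Cref{thm:oracle}), and bounding the sparse contribution by Cauchy--Schwarz, $\big(\tfrac2{n_0}\norm{\bX^{(0)}_{{\cal I}^c}\bdelta^{(0,k)}_{S_k^*}}^2\big)^{1/2}\big(\tfrac2{n_0}\norm{\bX^{(0)}_{{\cal I}^c}\bu}^2\big)^{1/2}$, both factors being bounded via \Cref{lem-2} since $\bdelta^{(0,k)}_{S_k^*}$ is $s^*$-sparse. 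Collecting all the pieces and discarding lower-order terms via \Cref{assum:A1}, \Cref{assum:A3} and \Cref{assum:A7} gives $\sup_{k\ne0}\lvert\widehat Q_0(\hbbeta^{(0,k)})-\widehat Q_0(\bbeta^{(0,k)})\rvert\lesssim\xi_{21}$.

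The second bound is the degenerate ``no source'' case: $\hbbeta^{(0)}_{\cal I}$ is the $\ell_1$-penalized quantile regression estimator on the training half, so $\bbeta^{(0,0)}=\bbeta^{(0)}$, and by \citet{fan2014adaptive} $\norm{\hbbeta^{(0)}_{\cal I}-\bbeta^{(0)}}^2\lesssim s_0\log(p\vee n_0)/n_0$ and $\norm{\hbbeta^{(0)}_{\cal I}-\bbeta^{(0)}}_1\lesssim s_0\sqrt{\log(p\vee n_0)/n_0}$; feeding these into the same expansion (now with no $\bdelta^{(0,k)}$ piece, $\bv=\tby^{(0)}_{{\cal I}^c}-\bX^{(0)}_{{\cal I}^c}\bbeta^{(0)}$) and applying \Cref{lem-1} and \Cref{lem-2} gives a bound of order $s_0\log(p\vee n_0)/n_0\le s^*\sqrt{\log(p\vee n_0)/n_0}=\xi_{22}$. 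The probability statement then follows by a union bound over the at most $K=O(n_0)$ sources: each use of \Cref{lem-1} costs $c\,n_k^{-cs_k}+c\exp(-cn_k)$, each restricted-eigenvalue or Bernstein event costs $\exp(-cn_0)$ or $\exp(-c\log p)$ (the latter absorbing the $p$-fold union in the Bernstein step), which aggregate to $1-c_1(\underline{n}\wedge n_0)^{-c_2\underline{s}}-c_3\exp(-c_4(\underline{n}\wedge n_0))-\exp(-c_5\log p)$. The step I expect to be the main obstacle is this cross term: a crude $\ell_\infty$--$\ell_1$ bound $\norm{\bu}_1\,\norm{\tfrac2{n_0}\bX^{(0)\top}_{{\cal I}^c}\bv}_\infty$ is far too large, because $\tfrac2{n_0}\bX^{(0)\top}_{{\cal I}^c}\bv$ contains the non-negligible systematic part $-\tfrac2{n_0}\bX^{(0)\top}_{{\cal I}^c}\bX^{(0)}_{{\cal I}^c}\bdelta^{(0,k)}$ for non-informative $k$; obtaining the sharp rate $\xi_{21}$ forces one through the \Cref{lem-1} rewriting, the split of $\bdelta^{(0,k)}$ by its effective support via the weak-sparsity \Cref{assum:A7}, and then the careful bookkeeping that every resulting contribution is dominated by $\xi_{21}$ (resp.\ $\xi_{22}$).
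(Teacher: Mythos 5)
Your proposal is correct and follows essentially the same route as the paper's proof: expand the quadratic difference of $\widehat Q_0$, control the surrogate-residual cross term via Lemma~\ref{lem-1} applied to the target testing half, feed in the $\ell_1/\ell_2$ bounds on $\hbbeta^{(0,k)}-\bbeta^{(0,k)}$ from statement (i) of the proof of Theorem~\ref{thm:oracle} (with Assumption~\ref{assum:A7} supplying the analogue of $C_{\bSigma}^{\calA}d$ and $s_0$ for $k\in\calA^c$), and union-bound over the $K$ sources with the same Bernstein and restricted-eigenvalue events. The only differences are cosmetic: the paper groups the remaining quadratic pieces as $\tfrac{2}{n_0}\sum_{i}\bx_i^{(0)\top}\bbeta^{(0)}\bx_i^{(0)\top}\bu$ plus a difference of squares and bounds them by sub-exponential concentration around means of order $\|\bbeta^{(0,k)}\|\,\|\bu\|\lesssim\sqrt{s^*}\,\|\bu\|$, whereas you split $\bdelta^{(0,k)}$ by its effective support and use Cauchy--Schwarz (both land on the same order); also note that the upper bound you need for $\tfrac{1}{n_0}\|\bX^{(0)}_{{\cal I}^c}\bu\|^2$ is the Zhou (2009)-type upper restricted-eigenvalue inequality invoked in the paper's proof of Theorem~\ref{thm:oracle}, not the lower bound actually stated in Lemma~\ref{lem-2}.
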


\begin{proof}[Proof of Lemma~\ref{lem-5}]
  Notice that for each source $k \ne 0$, we have
  \begin{align}
  \phantom{=\,}&\left| {{{\widehat Q}_0}\left( {\hbbeta ^{\left( 0,k \right)}} \right) - {{\widehat Q}_0}\left( {{\bbeta ^{\left( 0,k \right)}}} \right)} \right| \nonumber\\
  \le \, & \frac{2}{{{n_0}}}\left|{\sum\limits_{i \in {{\cal I}^c}} {\left( {\widetilde y_i^{(0)} - \bx_i^{(0)^\top}{\bbeta ^{\left( 0 \right)}}} \right)\bx_i^{(0)^\top}\left( {{\hbbeta^{\left( 0,k \right)}} - {\bbeta ^{\left( 0,k \right)}}} \right)} }\right| + \frac{2}{{{n_0}}}\left| {\sum\limits_{i \in {{\cal I}^c}} {\bx_i^{(0)^\top}{\bbeta ^{\left( 0 \right)}}\bx_i^{(0)^\top}\left( {{\hbbeta ^{\left( 0,k \right)}} - {\bbeta ^{\left( 0,k \right)}}} \right)} } \right| \nonumber\\
  \phantom{=\,}&{ + \frac{1}{{{n_0}}}}
  \left| {\sum\limits_{i \in {{\cal I}^c}} {\bx_i^{(0)^\top}\left( {{\hbbeta ^{\left( 0,k \right)}} + {\bbeta ^{\left( 0,k \right)}}} \right)\left( {{\hbbeta^{\left( 0,k \right)}} - {\bbeta ^{\left( 0,k \right)}}} \right)\bx_i^{(0)}} } \right| \label{eq:qhat},
  \end{align}
  which is similar to the proof of Lemma 7 in \citet{tian2022transfer} for the case of linear model. For the first term on the right hand side of~\eqref{eq:qhat},
  \begin{align}
  \phantom{=\,}&\frac{2}{{{n_0}}}\left|  {\sum\limits_{i \in {{\cal I}^c}} {\left( {\widetilde y_i^{(0)} - \bx_i^{(0)^\top}{\bbeta ^{\left( 0 \right)}}} \right)\bx_i^{(0)^\top}\left( {{\hbbeta ^{\left( 0,k \right)}} - {\bbeta ^{\left( 0,k \right)}}} \right)} } \right| \nonumber \\
  = \, & \left|  {{{\left( {\frac{2}{{{n_0}}}\sum\limits_{i \in {{\cal I}^c}} {\bx_i^{(0)}\widetilde y_i^{(0)}}  - \widehat {\bSigma} _{{{\cal I}^c}}^{\left( 0 \right)}{\bbeta ^{\left( 0 \right)}}} \right)}}^\top\left( {{\hbbeta ^{\left( 0,k \right)}} - {\bbeta ^{\left( 0,k \right)}}} \right)} \right| \nonumber \\
  \le \, & \left\|\left( {\frac{2}{{{n_0}}}\sum\limits_{i \in {{\cal I}^c}} {\bx_i^{(0)}\widetilde y_i^{(0)}}  - \widehat {\bSigma} _{{{\cal I}^c}}^{\left( 0 \right)}{\bbeta ^{\left( 0 \right)}}} \right)\right\|_\infty\left\|{\hbbeta^{\left( 0,k \right)}} - {\bbeta ^{\left( 0,k \right)}}\right\|_1 \nonumber \\
  \lesssim \, & {\sqrt {\frac{\log {\left(p \vee n_0\right)}}{n_0}} } \left\|{\hbbeta^{\left( 0,k \right)}} - {\bbeta ^{\left( 0,k \right)}}\right\|_1 \label{eq:Q0hat-bound1},
  \end{align}
  with probability at least $1 - c_1n_0^{ - c_2 s_0} - c_3\exp(-c_4n_0)$, where the last inequality is due to Lemma~\ref{lem-1} and Assumption~\ref{assum:A1}. For the second and third terms on
  the right hand side of~\eqref{eq:qhat}, considering that $\bx_i^{( 0 )}$s are independent and normally distributed, we
  can assert that the centralized versions of $$\frac{2}{{{n_0}}}\sum\limits_{i \in {{\cal I}^c}}
  {\bx_i^{(0)^\top}{\bbeta ^{\left( 0 \right)}}\bx_i^{(0)^\top}\left( {{
  \hbbeta ^{\left( 0,k \right)}} - {\bbeta ^{\left( 0,k \right)}}} \right)} $$ and
  $$\frac{2}{{{n_0}}}\sum\limits_{i \in {{\cal I}^c}} {\bx_i^{(0)^\top}\left( {{\hbbeta ^{\left( 0,k \right)}} + {\bbeta ^{\left( 0,k \right)}}} \right)\left({{\hbbeta^{\left( 0,k \right)}} - {\bbeta ^{\left( 0,k \right)}}} \right)\bx_i^{(0)}} $$ 
  are both sub-exponential. Besides, both of their means are at most multiplicative to ${\|\bbeta ^{( 0,k )}\|}\| {\hbbeta^{( 0,k)} - {\bbeta ^{( 0,k )}}} \|$ 
  according to \citet{vershynin2018high}. Take the second term of~\eqref{eq:qhat} as an example: with Lemma~\ref{lem-3} and the notation ${\mu _k} = c_1\| {{\bbeta ^{( {0,k} )}}} \|\| {\hbbeta ^{( {0,k} )} - {\bbeta ^{( {0,k} )}}} \|$ with a positive constant $c_1$ and ${v_k} = \| {{\bbeta ^{( {0,k} )}}} \|\| {\hbbeta ^{( {0,k} )} - {\bbeta ^{( {0,k} )}}} \|$, we have
  \begin{align*}
\phantom{=\,}& \P\left( {\mathop {\sup }\limits_{k \ne 0} \frac{2}{{{n_0}}}\left| {\sum\limits_{i \in {\cal I}^c} {\bx_i^{\left( 0 \right)^\top}{\bbeta ^{\left( 0 \right)}}\bx_i^{\left( 0 \right)^\top}\left( {\hbbeta ^{\left( {0,k} \right)} - {\bbeta ^{\left( {0,k} \right)}}} \right)}  - {\mu _k}} \right| \ge t} \right)\\
 \le \, & \sum\limits_{k = 1}^K {\P\left( {\frac{2}{{{n_0}}}\left| {\sum\limits_{i \in {\cal I}^c} {\bx_i^{\left( 0 \right)^\top}{\bbeta ^{\left( 0 \right)}}\bx_i^{\left( 0 \right)^\top}\left( {\hbbeta ^{\left( {0,k} \right)} - {\bbeta ^{\left( {0,k} \right)}}} \right) - {\mu _k}} } \right| \ge t} \right)} \\
 \le \, & K\exp \left( { - \frac{n_0}{4}\min \left\{ {\frac{t^2}{v_k^2},\frac{t}{v_k}} \right\}} \right).
\end{align*}
Thus, by taking $t \gtrsim \mathop {\sup }\limits_{k \ne 0} \sqrt {\frac{{\log K}}{{{n_0}}}} \| {{\bbeta ^{( {0,k} )}}} \|\| {\hbbeta ^{( {0,k} )} - {\bbeta ^{( {0,k} )}}} \|$, we can assert
\begin{equation}
\label{eq:Q0hat-bound2}
\mathop {\sup }\limits_{k \ne 0} {\max \left( T_2,T_3\right)} \lesssim {\left( {1 + \sqrt {\frac{{\log K}}{{{n_0}}}} } \right)\mathop {\sup }\limits_{k \ne 0} \left\| {{\bbeta ^{\left( {0,k} \right)}}} \right\|\left\| {{\hbbeta ^{\left( {0,k} \right)}} - {\bbeta ^{\left( {0,k} \right)}}} \right\|}
\end{equation}
  with probability at least $1- 2\exp(-c_1n_0)$. Hence, combining~\eqref{eq:Q0hat-bound1} and~\eqref{eq:Q0hat-bound2}, we have 
  \begin{align*}
    \phantom{ =\, } &
    \sup_{k \ne 0} \left|
    \widehat{Q}_0 \left(\hbbeta^{(0,k)}\right) -
    \widehat{Q}_0 \left(\bbeta^{(0,k)} \right)
    \right| \\
     \lesssim \, &
 \sqrt {\frac{{\log \left( {p \vee {n_0}} \right)}}{{{n_0}}}} \mathop {\sup }\limits_{k \ne 0} {\left\| {{\hbbeta ^{\left( {0,k} \right)}} - {\bbeta ^{\left( {0,k} \right)}}} \right\|_1} + \left( {1 + \sqrt {\frac{{\log K}}{{{n_0}}}} } \right)\mathop {\sup }\limits_{k \ne 0} \left\| {{\bbeta ^{\left( {0,k} \right)}}} \right\|\mathop {\sup }\limits_{k \ne 0} \left\| {{\hbbeta ^{\left( {0,k} \right)}} - {\bbeta ^{\left( {0,k} \right)}}} \right\|
  \end{align*}
  with probability at least $1 - c_1n_0^{ - c_2 s_0} - c_3\exp ( { - c_4{n_0}} )$. From statement (i) in the proof of Theorem~\ref{thm:oracle}, with ${n'} = \mathop {\min }\limits_{k \in {\calA}} {n_k}$, we have 
  \begin{align*}
  \phantom{ =\, } &\P\left( {\mathop {\sup }\limits_{k \in \calA} \left\| {{\hbbeta ^{\left( {0,k} \right)}} - {\bbeta ^{\left( {0,k} \right)}}} \right\| \lesssim {\frac{{{s_0}\log \left( {p \vee {n_0} \vee {n'}} \right)}}{{{n' } + {n_0}}} + {C_{\bSigma} ^{\calA}d }\sqrt {\frac{{\log \left( {p \vee {n_0} \vee n'} \right)}}{{n' + {n_0}}}} }} \right) \\
  \ge \, & 1 - c_1\left(n' \wedge n_0\right)^{-c_2 \underline{s}_{\calA}} - c_3\exp\left( -c_4 \left(n' \wedge n_0\right)\right) - \exp\left(-c_5\log p\right),
  \end{align*}
  and 
  \begin{align*}
  \phantom{ =\, } &\P\left( {\mathop {\sup }\limits_{k \in \calA} \left\| {{\hbbeta ^{\left( {0,k} \right)}} - {\bbeta ^{\left( {0,k} \right)}}} \right\|_1 \lesssim {s_0}\sqrt {\frac{{\log \left( {p \vee {n_0} \vee n'} \right)}}{{n' + {n_0}}}}  + C_{\bSigma} ^\calA d} \right) \\
  \ge \, & 1 - c_1\left(n' \wedge n_0\right)^{-c_2 \underline{s}_{\calA}} - c_3\exp\left( -c_4 \left(n' \wedge n_0\right)\right) - \exp\left(-c_5\log p\right).
  \end{align*}
  For all $k \in {\calA}^c$, combining Assumption~\ref{assum:A7} with~\eqref{eq:inter-3}, similarly with $n''=\mathop {\min }\limits_{k \in {\calA}^c} {n_k}$, we have
   \begin{align*}
  \phantom{ =\, } &\P\left( {\mathop {\sup }\limits_{k \in {\calA}^c} \left\| {{\hbbeta ^{\left( {0,k} \right)}} - {\bbeta ^{\left( {0,k} \right)}}} \right\| \lesssim {\frac{{{s'}\log \left( {p \vee {n_0} \vee n''} \right)}}{n'' + {n_0}} + d'\sqrt {\frac{{\log \left( {p \vee {n_0} \vee n''} \right)}}{{n'' + {n_0}}}} }} \right) \\
  \ge \, & 1 - c_1\left(n'' \wedge n_0\right)^{-c_2 \underline{s}_{{\calA}^c}} - c_3\exp\left( -c_4 \left(n'' \wedge n_0\right)\right) - \exp\left(-c_5\log p\right),
  \end{align*}
  and 
  \begin{align*}
  \phantom{ =\, } &\P\left( {\mathop {\sup }\limits_{k \in {\calA}^c} \left\| {{\hbbeta ^{\left( {0,k} \right)}} - {\bbeta ^{\left( {0,k} \right)}}} \right\|_1 \lesssim s'\sqrt {\frac{{\log \left( {p \vee {n_0} \vee n''} \right)}}{{n'' + {n_0}}}}  + d'} \right) \\
  \ge \, & 1 - c_1\left(n'' \wedge n_0\right)^{-c_2 \underline{s}_{{\calA}^c}} - c_3\exp\left( -c_4 \left(n'' \wedge n_0\right)\right) - \exp\left(-c_5\log p\right).
  \end{align*}
  Therefore, the desired conclusion holds true by combining two ineqaulity above and noticing that $\mathop {\sup }\nolimits_{k \ne 0} \| {{\bbeta ^{( {0,k} )}}} \| \lesssim \sqrt{s_0 \vee s'}$ from Assuption~\ref{assum:A7}. Also, the conclusion holds true similarly for the target estimator.
\end{proof}

\begin{lemma}
  \label{lem-6}
  With the notation $\xi_{22}$ defined in Lemma~\ref{lem-5}, under Assumptions~\ref{assum:A1},~\ref{assum:A2},~\ref{assum:A4},~\ref{assum:A5},~\ref{assum:A6}, and~\ref{assum:A7}, we have
  \begin{align*}
  \phantom{=\,}&\P\left( {\mathop {\sup }\limits_{k \ne 0} \left| {{{\widehat Q}_0}\left( {{\bbeta^{\left( {0,k} \right)}}} \right) - {Q_0}\left( {{\bbeta ^{\left( {0,k} \right)}}} \right)} \right| \vee \left| {{{\widehat Q}_0}\left( {{{\bbeta }^{\left( 0 \right)}}} \right) - {Q_0}\left( {{\bbeta ^{\left( 0 \right)}}} \right)} \right| \lesssim {\xi _{22}}} \right) \\
  \ge \, & 1 - c_1n_0^{ - c_2 s_0} -c_3 \exp \left( { - c_4n_0} \right)
  \end{align*}
  with $c_1$ to $c_4$ are some positive constant.
\end{lemma}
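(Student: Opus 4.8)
The statement is a uniform bound $\sup_{\bbeta\in\mathcal B}|\widehat Q_0(\bbeta)-Q_0(\bbeta)|\lesssim\xi_{22}$ over the \emph{deterministic} set $\mathcal B=\{\bbeta^{(0,1)},\dots,\bbeta^{(0,K)},\bbeta^{(0)}\}$, of cardinality $K+1=O(n_0)$ (the supremum over $k\neq0$ and the term at $\bbeta^{(0)}$ are both instances of this). Abbreviate $g=\hbbeta^{(0)}_{\mathcal I^c}$, $\hat f=\widehat f^{(0)}(0)$, $f=f^{(0)}(0)$, $u=g-\bbeta^{(0)}$, $\hat d_i=I(y_i^{(0)}-\bx_i^{(0)\top}g\le 0)-\tau$, and $d_i=I(\eps_i^{(0)}\le0)-\tau$. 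Expanding the square in $\widehat Q_0(\bbeta)=\frac2{n_0}\sum_{i\in\mathcal I^c}(\bx_i^{(0)\top}(g-\bbeta)-\hat f^{-1}\hat d_i)^2$, and using in $Q_0$ that $\E[\bx^{(0)}\widetilde y^{(0)}]=\bSigma^{(0)}\bbeta^{(0)}$, that $\bx^{(0)}$ is independent of $\eps^{(0)}$, and that $\E (d^{(0)})^2=\tau(1-\tau)$, one obtains $\widehat Q_0(\bbeta)-Q_0(\bbeta)=P_1(\bbeta)-P_2(\bbeta)+P_3$ with $P_1(\bbeta)=(g-\bbeta)^\top\hbSigma^{(0)}_{\mathcal I^c}(g-\bbeta)-(\bbeta^{(0)}-\bbeta)^\top\bSigma^{(0)}(\bbeta^{(0)}-\bbeta)$, $P_2(\bbeta)=\tfrac{4}{n_0}\hat f^{-1}\sum_{i\in\mathcal I^c}\bx_i^{(0)\top}(g-\bbeta)\,\hat d_i$, and $P_3=\hat f^{-2}\tfrac2{n_0}\sum_{i\in\mathcal I^c}\hat d_i^2-f^{-2}\tau(1-\tau)$; I would bound the three pieces separately.

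For $P_1$, substitute $g-\bbeta=(\bbeta^{(0)}-\bbeta)+u$ and expand into three terms. The term $(\bbeta^{(0)}-\bbeta)^\top(\hbSigma^{(0)}_{\mathcal I^c}-\bSigma^{(0)})(\bbeta^{(0)}-\bbeta)$ is, for each fixed $\bbeta\in\mathcal B$, a centered $\chi^2$-type average, so Bernstein (Lemma~\ref{lem-3}) plus a union bound over $\mathcal B$ puts it at $\lesssim\|\bbeta^{(0)}-\bbeta\|^2\sqrt{\log(p\vee n_0)/n_0}\lesssim s^{*}\sqrt{\log(p\vee n_0)/n_0}=\xi_{22}$, using $\sup_{k\neq0}\|\bbeta^{(0,k)}\|\lesssim\sqrt{s_0\vee s'}=\sqrt{s^{*}}$ (from Assumption~\ref{assum:A7}, exactly as in the proof of Lemma~\ref{lem-5}). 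The term $u^\top\hbSigma^{(0)}_{\mathcal I^c}u$ is the in-sample prediction error of the $\ell_1$-penalized quantile regression on $\mathcal I^c$: by \citet{fan2014adaptive}, under the irrepresentable condition (Assumption~\ref{assum:A4}, which gives $\widehat S_0\subseteq S_0$ with high probability), $u^\top\hbSigma^{(0)}_{\mathcal I^c}u\lesssim s_0\log(p\vee n_0)/n_0$ with $\|u\|\lesssim\sqrt{s_0\log(p\vee n_0)/n_0}$ and $\|u\|_1\lesssim s_0\sqrt{\log(p\vee n_0)/n_0}$. The cross term $2(\bbeta^{(0)}-\bbeta)^\top\hbSigma^{(0)}_{\mathcal I^c}u$ is split via $\hbSigma^{(0)}_{\mathcal I^c}=\bSigma^{(0)}+(\hbSigma^{(0)}_{\mathcal I^c}-\bSigma^{(0)})$; the $\bSigma^{(0)}$ part is $\lesssim\|\bbeta^{(0)}-\bbeta\|\,\|u\|\lesssim\sqrt{s^{*}s_0}\sqrt{\log(p\vee n_0)/n_0}\le\xi_{22}$ by Assumption~\ref{assum:A2}, and the remainder uses $u$ supported on $S_0$, the $\ell_\infty$ control $\lesssim\sqrt{\log p/n_0}$ of the sample-covariance deviation, and $\|\bbeta^{(0)}-\bbeta\|_1\lesssim s^{*}$ (weak sparsity of $\bbeta^{(0,k)}$, Assumption~\ref{assum:A7}), giving $\lesssim s^{*}s_0\log(p\vee n_0)/n_0=o(\xi_{22})$ under Assumption~\ref{assum:A3}. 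Hence $\sup_{\bbeta\in\mathcal B}|P_1(\bbeta)|\lesssim\xi_{22}$.

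For $P_2$, substituting $g-\bbeta=(\bbeta^{(0)}-\bbeta)+u$ and $\hat d_i=d_i+(\hat d_i-d_i)$ produces four cross-products. Those carrying $d_i$ are averages of (Gaussian)$\times$(centered, independent $d_i$) and concentrate at $0$ at rate $\lesssim\sqrt{s^{*}\log(p\vee n_0)/n_0}$ (fixed direction $\bbeta^{(0)}-\bbeta$) or $\lesssim s_0\log(p\vee n_0)/n_0$ (random $u$ on $S_0$). Those carrying $\hat d_i-d_i$ are the delicate ones: a sign flip at $i$ forces $\eps_i^{(0)}$ to lie between $0$ and $\bx_i^{(0)\top}u$, an event of conditional probability $\lesssim|\bx_i^{(0)\top}u|$ by the Lipschitz continuity and boundedness of $f^{(0)}$ near $0$ (Assumption~\ref{assum:A5}), so that after conditioning the leading part is $\propto f\,(\bbeta^{(0)}-\bbeta)^\top\hbSigma^{(0)}_{\mathcal I^c}u$ (already $\lesssim\xi_{22}$) and the remainders are lower order; since $\hat f^{-1}=O(1)$ on $\{|\hat f-f|=o(1)\}$ this yields $\sup_{\bbeta\in\mathcal B}|P_2(\bbeta)|\lesssim\xi_{22}$. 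Finally $P_3$ is $\bbeta$-free: from $\hat d_i^2=(1-2\tau)I(\eps_i^{(0)}\le\bx_i^{(0)\top}u)+\tau^2$ and the same sign-flip bound, $\tfrac2{n_0}\sum_{i\in\mathcal I^c}\hat d_i^2=\tau(1-\tau)+O(\|u\|+\sqrt{\log n_0/n_0})$, so $P_3=\tau(1-\tau)(\hat f^{-2}-f^{-2})+O(\sqrt{s_0\log(p\vee n_0)/n_0})$, and $|\hat f^{-2}-f^{-2}|\lesssim|\widehat f^{(0)}(0)-f^{(0)}(0)|$ is controlled by the kernel-density bound of the Remark after Lemma~\ref{lem-1} under the bandwidth of Assumption~\ref{assum:A6}, which makes $P_3=o(1)$ and $\lesssim\xi_{22}$. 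Collecting $P_1,P_2,P_3$ and the failure probabilities --- from the $\ell_1$ quantile fit and the density estimate on $\mathcal I^c$ (Lemma~\ref{lem-1}), the restricted-eigenvalue event (Lemma~\ref{lem-2}), and the Bernstein/union bounds (Lemma~\ref{lem-3}), which combine to $c_1 n_0^{-c_2 s_0}+c_3\exp(-c_4 n_0)$ --- gives the claim; the target estimator case is the instance $\bbeta=\bbeta^{(0)}\in\mathcal B$.

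The main obstacle is the non-differentiability of the quantile loss, which forces the surrogate response to carry the \emph{estimated} quantities $g$ and $\hat f$ inside an indicator. Bounding the indicator differences and the kernel-density error by crude norm inequalities is too lossy --- the raw density estimation error is of strictly larger order than $\xi_{22}$ --- so the proof must exploit that (i) a sign flip is a rare event whose probability is governed by the Lipschitz continuity of $f^{(0)}$ near $0$, so that conditional expectations collapse the indicator-difference terms into quadratic forms of the right size; (ii) the density error enters only through the $\bbeta$-independent piece $P_3$ (and is $o(1)$ under the bandwidth of Assumption~\ref{assum:A6}); and (iii) the dependence between the test subsample $\mathcal I^c$ and the estimates $g,\hat f$ built from it is absorbed, exactly as in \citet{chen2020distributed} (see the Remark after Lemma~\ref{lem-1}), by a covering / sub-interval-cutting argument over an $s_0$-dimensional ball around $\bbeta^{(0)}$, which is legitimate thanks to the support recovery of Assumption~\ref{assum:A4}.
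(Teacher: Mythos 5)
Your plan is correct in substance and reaches the right rate $\xi_{22}$, but it follows a genuinely different and considerably heavier route than the paper. The paper's proof never opens up the surrogate response: it rearranges $\widehat Q_0(\bbeta^{(0,k)})-Q_0(\bbeta^{(0,k)})$ into (a) an inner product of $\tfrac{2}{n_0}\sum_{i\in\mathcal I^c}\bx_i^{(0)}\widetilde y_i^{(0)}-\widehat\bSigma{}^{(0)}_{\mathcal I^c}\bbeta^{(0)}$ against a coefficient vector, killed in one line by H\"older together with the $\ell_\infty$ moment bound of Lemma~\ref{lem-1} (which already packages the indicator sign-flips, the kernel density error, and the dependence on the fitted $\hbbeta^{(0)}_{\mathcal I^c}$), plus (b) centered Gaussian quadratic forms $\tfrac{2}{n_0}\sum(\bx_i^{(0)\top}\bbeta^{(0,k)})^2$ and $\tfrac{2}{n_0}\sum\bx_i^{(0)\top}\bbeta^{(0)}\bx_i^{(0)\top}\bbeta^{(0,k)}$, handled by Bernstein (Lemma~\ref{lem-3}) with a union bound over $K=O(n_0)$ sources and the bound $\|\bbeta^{(0,k)}\|\lesssim\sqrt{s^*}$ from Assumption~\ref{assum:A7}. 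Your $P_1$ and the Bernstein/union-bound treatment of the fixed-direction quadratic forms coincide with (b), but your $P_2$ and $P_3$ --- the sign-flip analysis via the Lipschitz continuity of $f^{(0)}$, the density-estimation error, and the covering argument needed because $\mathcal I^c$ is used both to fit $g,\hat f$ and to evaluate the loss --- amount to re-deriving inside the proof exactly the content that Lemma~\ref{lem-1} (Proposition 11 of \citet{chen2020distributed}) supplies as a black box. What your route buys is transparency about where each error source enters and which assumption controls it; what it costs is that the two delicate pieces are only sketched ("the remainders are lower order", "exactly as in \citet{chen2020distributed}") and would need the full sub-interval-cutting argument to be written out, whereas the paper discharges all of that with a single invocation of an already-proved lemma. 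If you adopt the paper's factorization, your $P_2$ and $P_3$ disappear entirely.
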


\begin{proof}[Proof of Lemma~\ref{lem-6}]
Similar to the proof of Lemma 8 in \citet{tian2022transfer}, we have that
  \begin{align}
  \phantom{=\,}&\left| {{{\widehat Q}_0}\left( {{\bbeta ^{\left( 0,k \right)}}} \right) - {Q_0}\left( {{\bbeta ^{\left( 0,k \right)}}} \right)}\right| \nonumber\\
  \le \, & \frac{2}{{{n_0}}}\left| {\sum\limits_{i \in {{\cal I}^c}} {\left( {\widetilde y_i^{(0)} - \bx_i^{(0)^\top}{\bbeta ^{\left( 0 \right)}}} \right)\bx_i^{(0)^\top}{\bbeta ^{\left( 0 \right)}}} } \right| + \frac{2}{{{n_0}}}\left| {\sum\limits_{i \in {{\cal I}^c}} {{{\left( {\bx_i^{(0)^\top}{\bbeta ^{\left( 0,k \right)}}} \right)}^2} - E{{\left( {\bx_i^{(0)^\top}{\bbeta ^{\left( 0,k \right)}}} \right)}^2}} } \right| \nonumber\\
  \phantom{=\,}&{ + \frac{2}{{{n_0}}}\left| {\sum\limits_{i \in {{\cal I}^c}} {\bx_i^{(0)^\top}{\bbeta ^{\left( 0 \right)}}\bx_i^{(0)^\top}{\bbeta ^{\left( 0,k \right)}} - E\left( {\bx_i^{(0)^\top}{\bbeta ^{\left( 0 \right)}}\bx_i^{(0)^\top}{\bbeta ^{\left( 0,k \right)}}} \right)} } \right|} \label{eq:qorigin}.
  \end{align}
For the first term on the right hand side of~\eqref{eq:qorigin}, we bound it by:
  \begin{align}
  \phantom{=\,}&\frac{2}{{{n_0}}}\left| {\sum\limits_{i \in {{\cal I}^c}} {\left( {\widetilde y_i^{(0)} - \bx_i^{(0)^\top}{\bbeta ^{\left( 0 \right)}}} \right)\bx_i^{(0)^\top}{\bbeta ^{\left( 0 \right)}}} } \right|\nonumber \\
  \le \, &\left\|\frac{2}{{{n_0}}}\sum\limits_{i \in {{\cal I}^c}} {\bx_i^{(0)^\top}\widetilde y_i^{(0)}}  - \widehat {\bSigma} _{{\cal I}^c}^{\left( 0 \right)}{\bbeta ^{\left( 0 \right)}}\right\|_\infty \left\|{\bbeta ^{\left( 0 \right)}}\right\|_1 \nonumber \\
  \lesssim \, &\sqrt {\frac{{\log \left(p \vee n_0\right)}}{n_0}} \left\|{\bbeta ^{\left( 0 \right)}}\right\|_1 \label{eq:Q0hatQ0-bound1},
  \end{align}
  with probability at least $1 - c_1n_0^{ - c_2 s_0} - c_3\exp(-c_4n_0)$, where the last inequality is due to Lemma~\ref{lem-1} and Assumption~\ref{assum:A1}. For the second and the third terms on the right hand side of~\ref{eq:qorigin}, we know that $$\frac{2}{{{n_0}}}\sum\limits_{i \in {{\cal I}^c}}{{\left(
  {\bx_i^{(0)^\top}{\bbeta ^{\left( 0,k \right)}}} \right)}^2}$$ and $$
  \frac{2}{{{n_0}}}\sum\limits_{i \in {{\cal I}^c}} {\bx_i^{(0)^\top}{\bbeta ^{\left( 0
  \right)}}\bx_i^{(0)^\top}{\bbeta ^{\left( 0,k \right)}}} $$ are sub-exponential with their means at most multiplicative to
  $\|\bbeta ^{( 0,k )}\|^2$ and
  $\|\bbeta ^{( 0 )}\|\|{\bbeta ^{( 0,k )}}\|$ respectively, according to \citet{vershynin2018high}. We utilize Lemma~\ref{lem-3} to get
  \begin{equation}
  \label{eq:Q0hatQ0-bound2}
  \mathop {\sup }\limits_{k \ne 0} \left|  {\frac{1}{{{n_0}}}\sum\limits_{i \in {{\cal I}^c}} {{{\left( {\bx_i^{(0)^\top}{\bbeta ^{\left( 0,k \right)}}} \right)}^2} - E{{\left( {\bx_i^{(0)^\top}{\bbeta ^{\left( 0,k \right)}}} \right)}^2}} } \right|  \lesssim {\sqrt {\frac{{\log K}}{{{n_0}}}} \mathop {\sup }\limits_{k \ne 0} \left\|{\bbeta ^{\left( 0,k \right)}}\right\|^2},
  \end{equation}
  and 
  \begin{align}
  \phantom{=\,}&\mathop {\sup }\limits_{k \ne 0} \left| {\frac{2}{{{n_0}}}\sum\limits_{i \in {{\cal I}^c}} {\bx_i^{(0)^\top}{\bbeta ^{\left( 0 \right)}}\bx_i^{(0)^\top}{\bbeta ^{\left( 0,k \right)}} - E\left( {\bx_i^{(0)^\top}{\bbeta ^{\left( 0 \right)}}\bx_i^{(0)^\top}{\bbeta ^{\left( 0,k \right)}}} \right)} } \right| \nonumber \\
  \lesssim \, & {\sqrt {\frac{{\log K}}{{{n_0}}}} \left\|{\bbeta ^{\left( 0 \right)}}\right\|\mathop {\sup }\limits_{k \ne 0} \left\|{\bbeta ^{\left( 0,k \right)}}\right\|}, \label{eq:Q0hatQ0-bound3}
  \end{align}
  with probability at least $1- 2\exp(-c_1n_0)$. Combining~\eqref{eq:Q0hatQ0-bound1},~\eqref{eq:Q0hatQ0-bound2} and~\eqref{eq:Q0hatQ0-bound3} and noticing that $\| \bbeta^{(0)} \|_1$ is in level of $s_0$, $\| \bbeta^{(0)} \|$ and $\| \bbeta^{(0,k)} \|$ are in level of $\sqrt{s_0 \vee s'}$ under Assumption~\ref{assum:A7}, we reach the desired conclusion. The conclusion holds true similarly for the target estimator.
\end{proof}

We now prove Theorem~\ref{thm:detect}. On one hand, from the decomposition 
  \begin{align}
  \phantom{=\,}&\mathop {\sup }\limits_{k \in {\cal A}} \left| {{{\widehat Q}_0}\left( {{\hbbeta ^{\left( 0,k \right)}}} \right) - {{\widehat Q}_0}\left( {\hbbeta^{\left( 0 \right)}_{\cal I}} \right)} \right| \nonumber\\
  \le \,& \mathop {\sup }\limits_{k \in {\cal A}} \left| {{{\widehat Q}_0}\left( {{\hbbeta ^{\left( 0,k \right)}}} \right) - {{\widehat Q}_0}\left( {{\bbeta ^{\left( 0,k \right)}}} \right)} \right| +  \left| {{{\widehat Q}_0}\left( {{\hbbeta ^{\left( 0 \right)}_{\cal I}}} \right) - {{\widehat Q}_0}\left( {{\bbeta ^{\left( 0 \right)}}} \right)} \right| \nonumber\\
  \phantom{=\,}& + \mathop {\sup }\limits_{k \in {\cal A}} \left| {{{\widehat Q}_0}\left( {{\bbeta ^{\left( 0,k \right)}}} \right) - {{\widehat Q}_0}\left( {{\bbeta ^{\left( 0 \right)}}} \right) - {Q_0}\left( {{\bbeta ^{\left( 0,k \right)}}} \right) + {Q_0}\left( {{\bbeta ^{\left( 0 \right)}}} \right)} \right| \nonumber\\
  \phantom{=\,}& +\mathop {\sup }\limits_{k \in {\cal A}} \left| {{Q_0}\left( {{\bbeta ^{\left( 0,k \right)}}} \right) - {Q_0}\left( {{\bbeta ^{\left( 0 \right)}}} \right)} \right|\label{eq:qbetahat},
  \end{align}
  we can bound the first and second term on the right hand side of~\eqref{eq:qbetahat} according to Lemma~\ref{lem-5} as
  \begin{align*}
  \phantom{=\,}& \mathop {\sup }\limits_{k \in {\cal A}} \left| {{{\widehat Q}_0}\left( {{\hbbeta ^{\left( 0,k \right)}}} \right) - {{\widehat Q}_0}\left( {{\bbeta ^{\left( 0,k \right)}}} \right)} \right| +  \left| {{{\widehat Q}_0}\left( {{\hbbeta ^{\left( 0 \right)}_{\cal I}}} \right) - {{\widehat Q}_0}\left( {{\bbeta ^{\left( 0 \right)}}} \right)} \right| \\
  \le \, &\mathop {\sup }\limits_{k \ne 0} \left| {{{\widehat Q}_0}\left( {{\hbbeta ^{\left( 0,k \right)}}} \right) - {{\widehat Q}_0}\left( {{\bbeta ^{\left( 0,k \right)}}} \right)} \right| +  \left| {{{\widehat Q}_0}\left( {{\hbbeta ^{\left( 0 \right)}_{\cal I}}} \right) - {{\widehat Q}_0}\left( {{\bbeta ^{\left( 0 \right)}}} \right)} \right|\\
  \lesssim \, & \xi_{21} + \xi_{22},
  \end{align*}
  with probability at least $1 - c_1(\underline{n} \wedge n_0)^{ - c_2 \underline{s}} -c_3 \exp ( { - c_4(\underline{n} \wedge n_0)} ) - \exp (-c_5 \log p)$. Meanwhile, the fourth term on the right hand side of~\eqref{eq:qbetahat} can be bounded by Lemma~\ref{lem-4} as
  \begin{equation*}
  \mathop {\sup }\limits_{k \in {\cal A}}\left| {{Q_0}\left( {{\bbeta ^{\left( 0,k \right)}}} \right) - {Q_0}\left( {{\bbeta ^{\left( 0 \right)}}} \right)} \right| = O\left( {{d^2}} \right).
  \end{equation*}
  Next, the third term on the right hand side of~\eqref{eq:qbetahat} can be bounded by Lemma~\ref{lem-6} as 
  \begin{align*}
  \phantom{=\,}&\mathop {\sup }\limits_{k \in {\cal A}} \left|  {{{\widehat Q}_0}\left( {{\bbeta ^{\left( 0,k \right)}}} \right) - {{\widehat Q}_0}\left( {{\bbeta ^{\left( 0 \right)}}} \right) - {Q_0}\left( {{\bbeta ^{\left( 0,k \right)}}} \right) + {Q_0}\left( {{\bbeta ^{\left( 0 \right)}}} \right)} \right| \\
  \le \, & \mathop {\sup }\limits_{k \in {\cal A}} \left|  {{{\widehat Q}_0}\left( {{\bbeta ^{\left( 0,k \right)}}} \right) - {Q_0}\left( {{\bbeta ^{\left( 0,k \right)}}} \right)} \right|  + \mathop {\sup }\limits_{k \in {\cal A}} \left|  {{{\widehat Q}_0}\left( {{\bbeta ^{\left( 0 \right)}}} \right) - {Q_0}\left( {{\bbeta ^{\left( 0 \right)}}} \right)} \right| \\
  \le \, & \mathop {\sup }\limits_{k \ne 0} \left|  {{{\widehat Q}_0}\left( {{\bbeta ^{\left( 0,k \right)}}} \right) - {Q_0}\left( {{\bbeta ^{\left( 0,k \right)}}} \right)} \right|  + \mathop {\sup }\limits_{k \ne 0} \left|  {{{\widehat Q}_0}\left( {{\bbeta ^{\left( 0 \right)}}} \right) - {Q_0}\left( {{\bbeta ^{\left( 0 \right)}}} \right)}\right| \\
  \lesssim \, & \xi_{22},
  \end{align*}
  with probability at least $1- c_1n_0^{-c_2 s_0} - c_3\exp( -c_4n_0 )$. Hence, considering that $\xi_2 = \xi_{21} \vee \xi_{22}$, we have
  \begin{equation*}
  \mathop {\sup }\limits_{k \in {\cal A}} \left| {{{\widehat Q}_0}\left( {{\hbbeta ^{\left( 0,k \right)}}} \right) - {{\widehat Q}_0}\left( {{\hbbeta ^{\left( 0 \right)}_{\cal I}}} \right)} \right| 
  \lesssim {{d^2} + \xi_2},
  \end{equation*}
  with probability at least $1 - c_1(\underline{n} \wedge n_0)^{ - c_2 \underline{s}}-c_3\exp(-c_4(\underline{n}\wedge n_0)) - \exp (-c_5 \log p)$ for some positive constants $c_1$ to $c_5$. Hence, with the constant ${c_\varepsilon }$ mentioned in Assumption~\ref{assum:A8} and the fact that $| {{Q_0}( {{\bbeta ^{( 0 )}}} ) - {{\widehat Q}_0}( {{\hbbeta ^{( 0 )}_{\cal I}}} )} | \lesssim {\xi_{22}}$ from the combination of Lemma~\ref{lem-5} and~\ref{lem-6}, we have
  \begin{equation*}
  \mathop {\sup }\limits_{k \in {\cal A}} \left| {{{\widehat Q}_0}\left( {{\hbbeta ^{\left( 0,k \right)}}} \right) - {{\widehat Q}_0}\left( {{\hbbeta ^{\left( 0 \right)}_{\cal I}}} \right)} \right| \le {c_\varepsilon }{{\widehat Q}_0}\left( {{\hbbeta ^{\left( 0 \right)}_{\cal I}}} \right)
  \end{equation*}
  with the same probability mentioned above. We can further have 
  \begin{equation*}
  \mathop {\sup }\limits_{k \in {\cal A}}{{\widehat Q}_0}\left( {{\hbbeta ^{\left( 0,k \right)}}} \right)  \le \left(1+{c_\varepsilon }\right){{\widehat Q}_0}\left( {{\hbbeta ^{\left( 0 \right)}_{\cal I}}} \right).
  \end{equation*}

On the other hand, we have 
\begin{align*}
  \phantom{=\,}&\mathop {\inf }\limits_{k \in {{\cal A}^c}}\left| {{{\widehat Q}_0}\left( {{\hbbeta^{\left( 0,k \right)}}} \right) - {{\widehat Q}_0}\left( {{\hbbeta ^{\left( 0 \right)}_{\cal I}}} \right)} \right|\\
  \geq \,& \mathop {\inf }\limits_{k \in {{\cal A}^c}} \left| {{Q_0}\left( {{\bbeta ^{\left( 0,k \right)}}} \right) - {Q_0}\left( {{\bbeta ^{\left( 0 \right)}}} \right)} \right|\\
  \phantom{=\,} \,& -\mathop {\sup }\limits_{k \in {{\cal A}^c}} \left| {{{\widehat Q}_0}\left( {{\hbbeta ^{\left( 0,k \right)}}} \right) - {{\widehat Q}_0}\left( {{\bbeta ^{\left( 0,k \right)}}} \right)} \right| -  \left| {{{\widehat Q}_0}\left( {{\hbbeta ^{\left( 0 \right)}_{\cal I}}} \right) - {{\widehat Q}_0}\left( {{\bbeta ^{\left( 0 \right)}}} \right)} \right| \\
  \phantom{=\,}& - \mathop {\sup }\limits_{k \in {{\cal A}^c}} \left| {{{\widehat Q}_0}\left( {{\bbeta ^{\left( 0,k \right)}}} \right) - {{\widehat Q}_0}\left( {{\bbeta ^{\left( 0 \right)}}} \right) - {Q_0}\left( {{\bbeta ^{\left( 0,k \right)}}} \right) + {Q_0}\left( {{\bbeta ^{\left( 0 \right)}}} \right)} \right| \\
  \geq \, & \mathop {\inf }\limits_{k \in {{\cal A}^c}} \left| {{Q_0}\left( {{\bbeta ^{\left( 0,k \right)}}} \right) - {Q_0}\left( {{\bbeta ^{\left( 0 \right)}}} \right)} \right| - \xi_2,
  \end{align*}
  with the same probability mentioned above. Then, utilizing the Taylor expansion of ${Q_0}( {{\bbeta ^{( 0,k )}}} )$ on ${Q_0}( {{\bbeta ^{( 0 )}}} )$ and noticing ${Q'_0}( {{\bbeta ^{( 0 )}}} )=0$, under Assumption~\ref{assum:A8}, we can get
  \begin{align*}
\phantom{=\,}&\mathop {\inf }\limits_{k \in {{\cal A}^c}} \left| {{\widehat Q}_0}\left( {{\hbbeta ^{\left( 0,k \right)}}} \right) - {{\widehat Q}_0}\left( {{\hbbeta ^{\left( 0 \right)}_{\cal I}}} \right) \right|\\
 \geq \, & {\lambda _{\min }}\mathop {\inf }\limits_{k \in {{\cal A}^c}}\left\|{\bbeta ^{\left( 0,k \right)}} - {\bbeta ^{\left( 0 \right)}}\right\|^2 - \xi_2 \\
 \geq  \, &{ c_\varepsilon {Q_0}\left( {{\bbeta ^{\left( 0 \right)}}} \right)  + \xi_2} \\
 \geq \, & c_\varepsilon {{\widehat Q}_0}\left( {\hbbeta ^{\left( 0 \right)}_{\cal I}} \right),
  \end{align*}
with probability at least $1 - c_1(\underline{n} \wedge n_0)^{ - c_2 \underline{s}}-c_3\exp(-c_4(\underline{n} \wedge n_0)) - \exp(-c_5 \log p )$ for some positive constants $c_1$ to $c_5$, where the last line holds true due to $| {{Q_0}( {{\bbeta ^{( 0 )}}} ) - {{\widehat Q}_0}( {{\hbbeta ^{( 0 )}_{\cal I}}} )} | \lesssim {\xi_{22}}$ from the combination of Lemma~\ref{lem-5} and~\ref{lem-6}. Hence, we have
\begin{equation*}
\mathop {\inf }\limits_{k \in {{\cal A}^c}} {{\widehat Q}_0}\left( {{\hbbeta ^{\left( 0,k \right)}}} \right) \geq \left( {1 + {c_\varepsilon }} \right){{\widehat Q}_0}\left( {\hbbeta ^{\left( 0 \right)}_{\cal I}} \right)
\end{equation*}
with the mentioned probability above. Finally, we can reach to the desired conclusion, because
\begin{align*}
\phantom{=\,}&\P\left( {\widehat {\cal A}  \ne  {\cal A}} \right) \\
\le \,& \P\left( {\left\{ {\mathop {\inf }\limits_{k \in {{\cal A}^c}} {{\widehat Q}_0}\left( {{\hbbeta ^{\left( 0,k \right)}}} \right) < \left( {1 + {c_\varepsilon }} \right){{\widehat Q}_0}\left( {\hbbeta ^{\left( 0 \right)}_{\cal I}} \right)} \right\} \cup \left\{ {\mathop {\sup }\limits_{k \in {\cal A}} {{\widehat Q}_0}\left( {{\hbbeta ^{\left( 0,k \right)}}} \right) > \left( {1 + {c_\varepsilon }} \right){{\widehat Q}_0}\left( {\hbbeta ^{\left( 0 \right)}_{\cal I}} \right)} \right\}} \right)\\
\le \,& \P\left( { {\mathop {\inf }\limits_{k \in {{\cal A}^c}} {{\widehat Q}_0}\left( {{\hbbeta ^{\left( 0,k \right)}}} \right) < \left( {1 + {c_\varepsilon }} \right){{\widehat Q}_0}\left( {\hbbeta ^{\left( 0 \right)}_{\cal I}} \right)} }\right) + \P\left({\mathop {\sup }\limits_{k \in {\cal A}} {{\widehat Q}_0}\left( {{\hbbeta ^{\left( 0,k \right)}}} \right) > \left( {1 + {c_\varepsilon }} \right){{\widehat Q}_0}\left( {\hbbeta ^{\left( 0 \right)}_{\cal I}} \right)}\right)\\
\le \, & c_1\left(\underline{n} \wedge n_0\right)^{ - c_2 \underline{s} }+c_3\exp\left(-c_4\left(\underline{n} \wedge n_0\right)\right) + 2\exp\left(-c_5 \log p \right).
\end{align*}

\section{Additional Results for Simulations}\label{sec:addition-result}

We provide additional simulation results in this section. To recall the methods defined in Section~\ref{sec:simulation} and with $|\mathcal{A}|$ representing the number of informative sets, the following abbreviations are used in the subsequent tables:

\begin{itemize}
  \item \textbf{L} stands for "Lasso QR", which denotes $\ell_1$-penalized quantile regression with only
    target sample;
  \item \textbf{OTL} stands for "Oracle Trans-Lasso QR", which is our proposed method assuming a known informative set;
  \item \textbf{NTL} stands for "Naive Trans-Lasso QR", which is our proposed method that naively treats all sources as informative without a detection procedure;
  \item \textbf{PTL} stands for "Pseudo Trans-Lasso QR", which is our proposed method performing informative set detection with a known number of informative sources $|\mathcal{A}|$;
  \item \textbf{TL} stands for "Trans-Lasso QR", which is our proposed method performing informative set detection without prior knowledge;
  \item \textbf{OTLP} stands for "Oracle Trans-Lasso Pooled QR", assuming a known informative set, as presented in Section 2 of \citet{huang2022transfer}.
  \item \textbf{TLP} stands for "Trans-Lasso Pooled QR", which performs informative set detection, presented in Sections 4 and 5 of \citet{huang2022transfer}.
\end{itemize}

Our simulations, detailed in Section~\ref{sec:simulation}, vary across the following parameters: dimension size $p$, sample size $n$ such that $n = n_0 = \cdots = n_{20}$, signal-to-noise ratio $\eta$, residual distribution (either ``normal'' or ``Cauchy''), and the covariance type (either ``auto'' for auto-covariance in the homogeneous setting or ``Toeplitz'' for Toeplitz covariance in the heterogeneous setting). The parameter settings for all simulation results presented in this section are listed as follows:

\begin{itemize}
 \item For Tables~\ref{tab:ci-normal-mse}-\ref{tab:ci-normal-ql}, we set $p = 150$, $n = 150$, $\eta = 20$, residual as ``normal'', covariance as ``auto''. 
 \item For Tables~\ref{tab:ci-cauchy-mse}-\ref{tab:ci-cauchy-ql}, we set $p = 150$, $n = 150$, $\eta = 20$, residual as ``Cauchy'', covariance as ``auto''. 
 \item For Tables~\ref{tab:cii-normal-mse}-\ref{tab:cii-normal-ql}, we set $p = 150$, $n = 150$, $\eta = 20$, residual as ``normal'', covariance as ``Toeplitz''. 
 \item For Tables~\ref{tab:cii-cauchy-mse}-\ref{tab:cii-cauchy-ql}, we set $p = 150$, $n = 150$, $\eta = 20$, residual as ``Cauchy'', covariance as ``Toeplitz''. 
 \item For Tables~\ref{tab:c1h1eta10-mse}-\ref{tab:c1h1eta10-ql}, we set $p = 150$, $n = 150$, $\eta = 10$, residual as ``normal'', covariance as ``auto''. 
 \item For Tables~\ref{tab:c1h1eta5-mse}-\ref{tab:c1h1eta5-ql}, we set $p = 150$, $n = 150$, $\eta = 5$, residual as ``normal'', covariance as ``auto''. 
 \item For Tables~\ref{tab:c1h1p75-mse}-\ref{tab:c1h1p75-ql}, we set $p = 75$, $n = 150$, $\eta = 10$, residual as ``normal'', covariance as ``auto''. 
 \item For Tables~\ref{tab:c1h1p225-mse}-\ref{tab:c1h1p225-ql}, we set $p = 225$, $n = 150$, $\eta = 10$, residual as ``normal'', covariance as ``auto''. 
 \item For Tables~\ref{tab:c1h2p75-mse}-\ref{tab:c1h2p75-ql}, we set $p = 75$, $n = 150$, $\eta = 10$, residual as ``normal'', covariance as ``Toeplitz''. 
 \item For Tables~\ref{tab:c1h2p225-mse}-\ref{tab:c1h2p225-ql}, we set $p = 225$, $n = 150$, $\eta = 10$, residual as ``normal'', covariance as ``Toeplitz''.
 \item For Tables~\ref{tab:c2h1p75-mse}-\ref{tab:c2h1p75-ql}, we set $p = 75$, $n = 150$, $\eta = 10$, residual as ``Cauchy'', covariance as ``auto''.
 \item For Tables~\ref{tab:c2h1p225-mse}-\ref{tab:c2h1p225-ql}, we set $p = 225$, $n = 150$, $\eta = 10$, residual as ``Cauchy'', covariance as ``auto''.
 \item For Tables~\ref{tab:c2h2p75-mse}-\ref{tab:c2h2p75-ql}, we set $p = 75$, $n = 150$, $\eta = 10$, residual as ``Cauchy'', covariance as ``Toeplitz''.
 \item For Tables~\ref{tab:c2h2p225-mse}-\ref{tab:c2h2p225-ql}, we set $p = 225$, $n = 150$, $\eta = 10$, residual as ``Cauchy'', covariance as ``Toeplitz''.
 \item For Tables~\ref{tab:c1h1ls_mse}-\ref{tab:c1h1ls_time}, we set $p \in \{75, 150, 300, 600\}$, $n \in \{1000, 2500, 5000, 10000\}$, $\eta = 20$, residual as ``normal'', covariance as ``auto''.
\end{itemize}


\noindent\makebox[\textwidth]{%
  \fontsize{10}{8}\selectfont  
\begin{threeparttable}  
  \renewcommand\arraystretch{1}
  \caption{MSE on $\bbeta$ among different $d$ in homogeneous setting for normal error case (Case 1) with $\eta=20$. The results are presented as ``average (standard deviation)''.}  
  \label{tab:ci-normal-mse}  
  \setlength\extrarowheight{-3pt}
  
    \end{threeparttable}  
}

\begin{figure}[H]
\centering
\includegraphics[width=\textwidth]{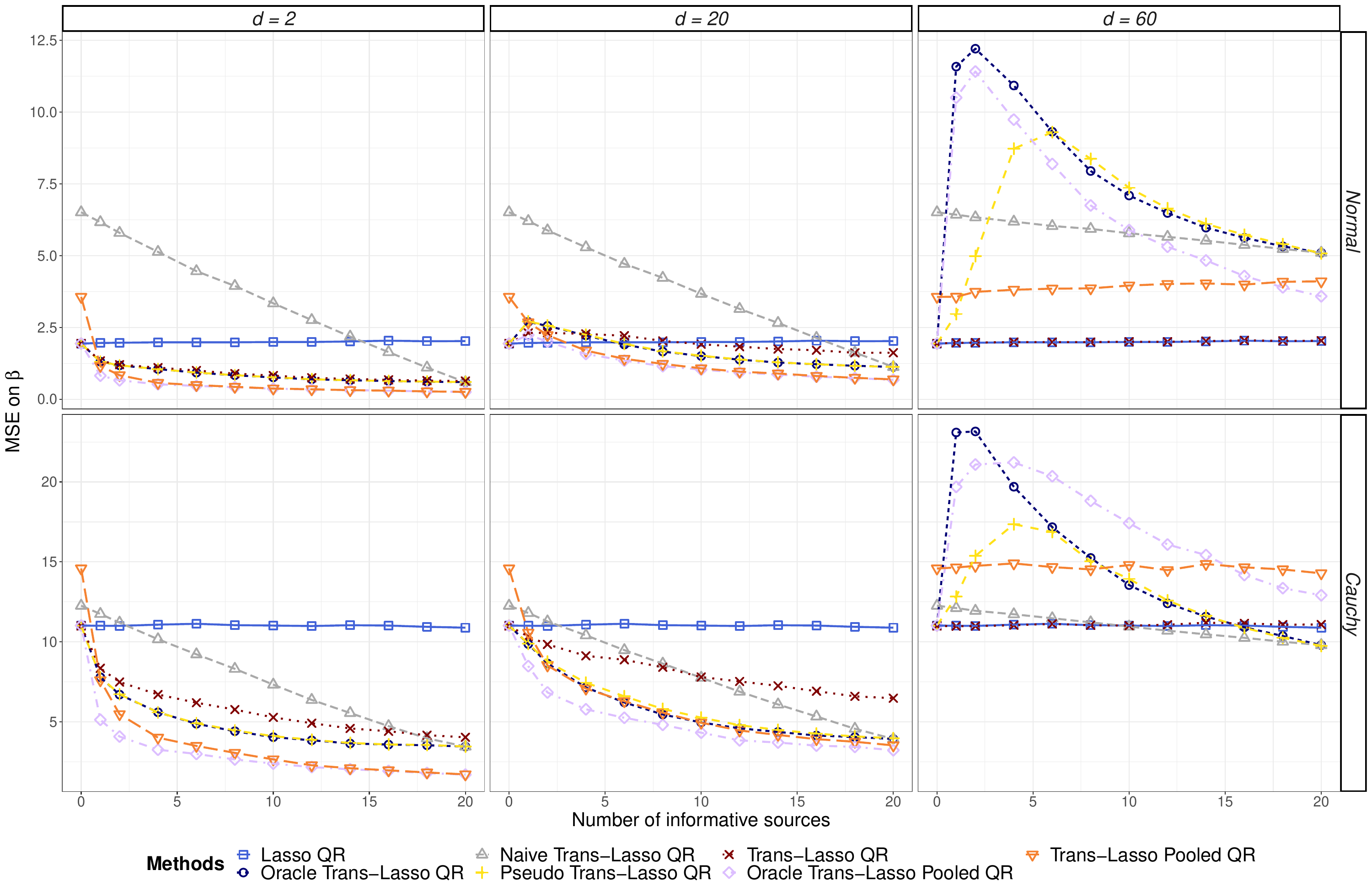}
\caption{Average MSE on $\bbeta$ among different $d$ with $\eta=20$ in heterogeneous setting for normal and Cauchy error.}
\label{fig:hete-beta}
\end{figure}

\begin{figure}[H]
\centering
\includegraphics[width=\textwidth]{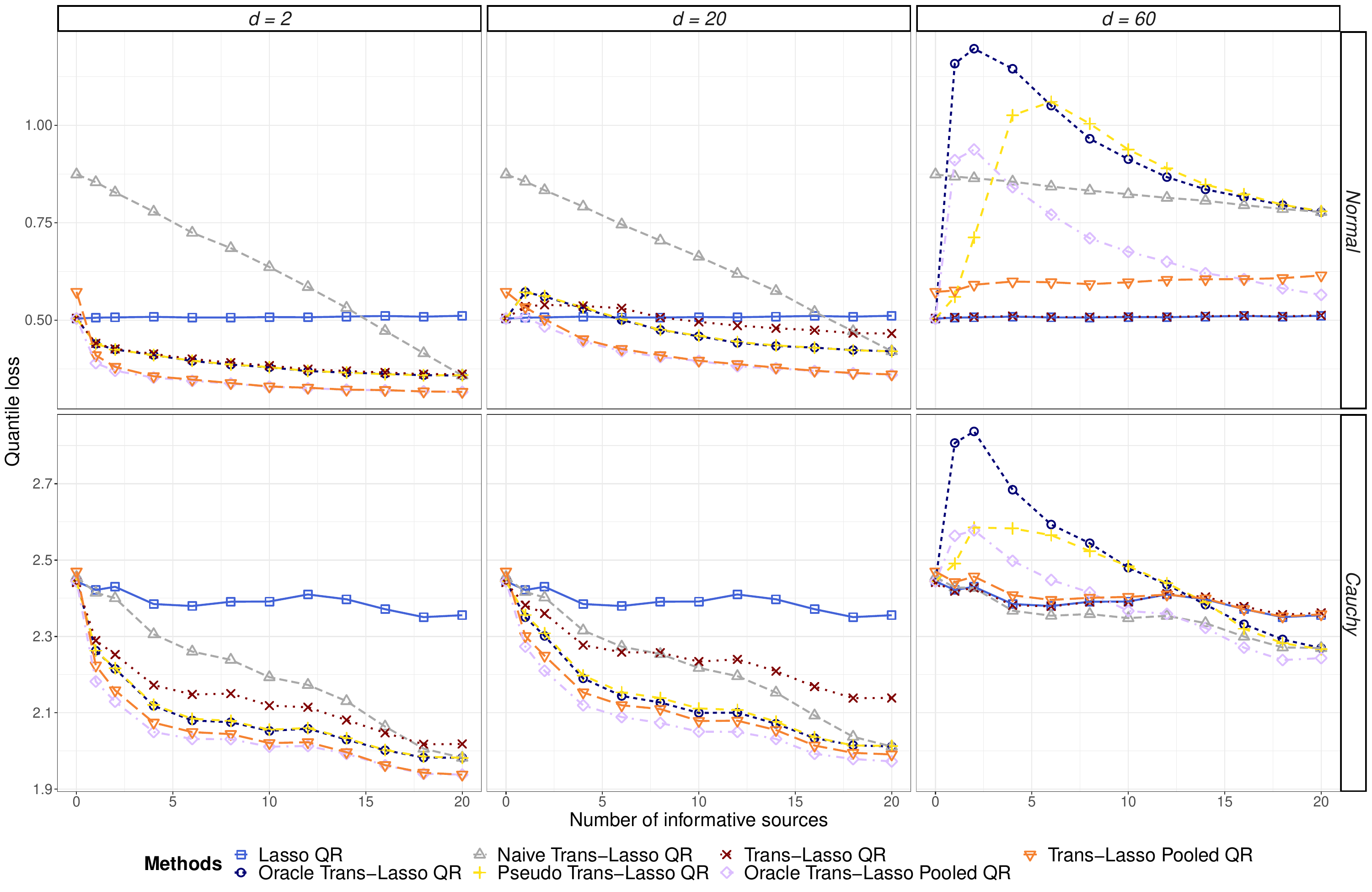}
\caption{Average quantile loss among different $d$ with $\eta=20$ in heterogeneous setting for normal and Cauchy error.}
\label{fig:hete-loss}
\end{figure}

\begin{figure}[H]
\centering
\begin{subfigure}[b]{1\textwidth}
   \includegraphics[width=15cm,height=5cm]{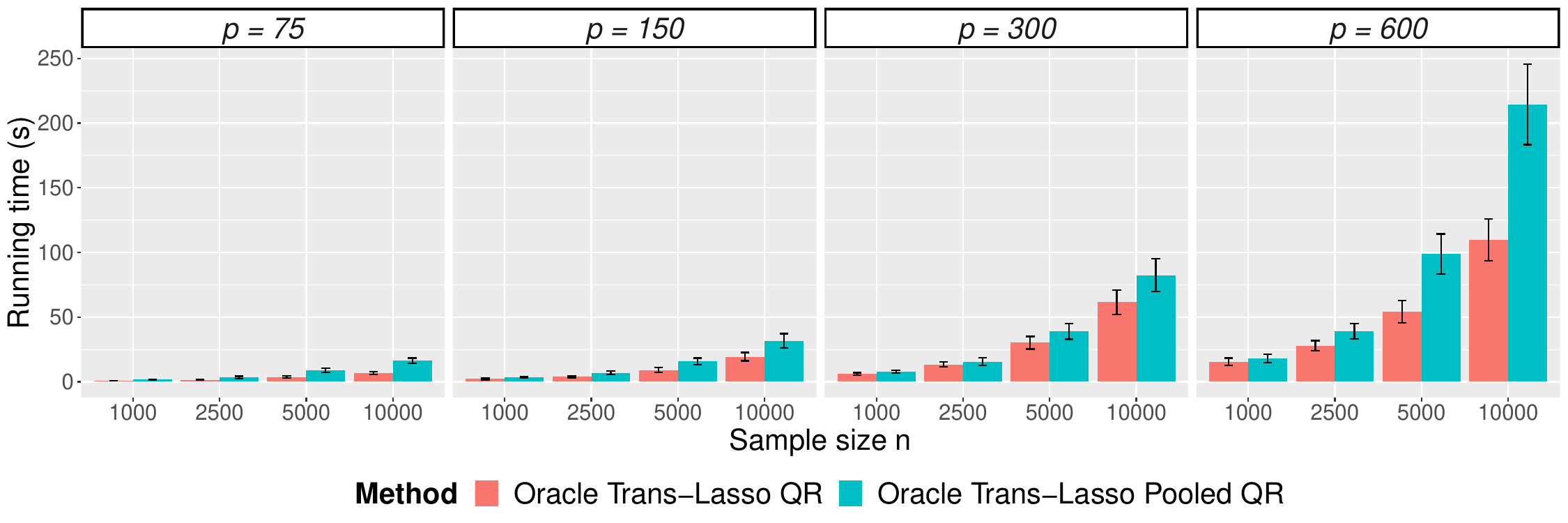}
   \caption{}
   \label{fig:lstime_oracle} 
\end{subfigure}

\begin{subfigure}[b]{1\textwidth}
   \includegraphics[width=15cm,height=5cm]{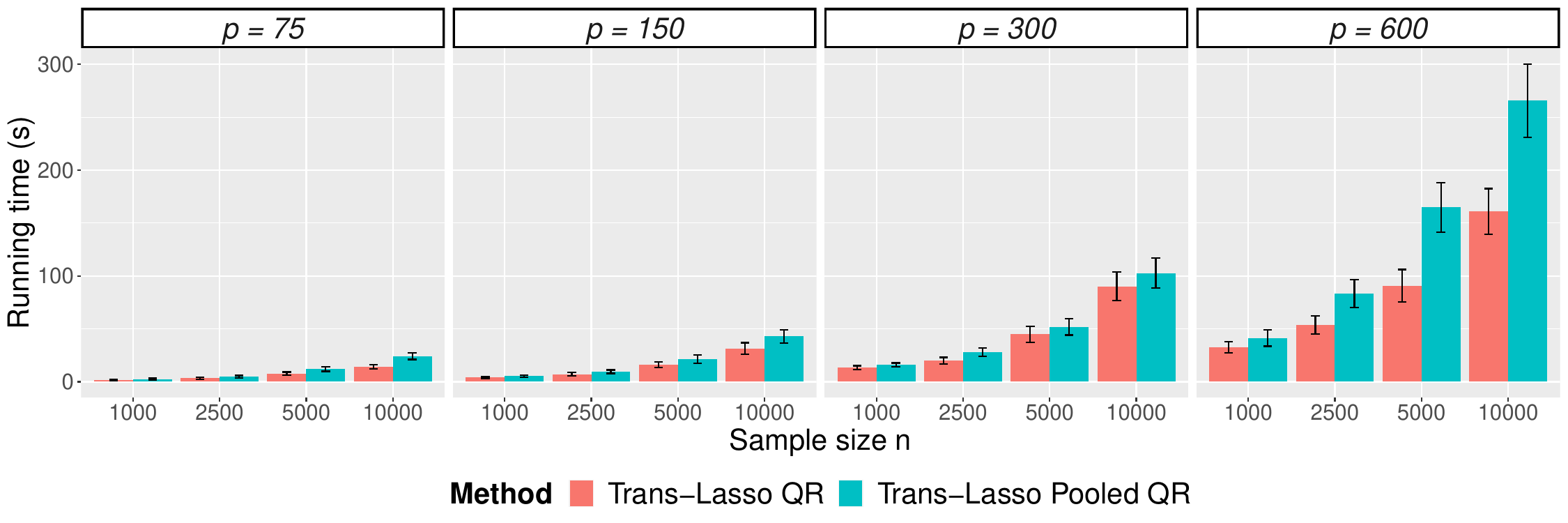}
   \caption{}
   \label{fig:lstime_trans}
\end{subfigure}
\caption{Average running time comparison for our framework and pooling framework from \cite{huang2022transfer} under (a) oracle setting and (b) with informative set detection, where the error bar is estimated based on $mean \pm 1.96 \times sd$ from 100 repetitions.}
\label{fig:lstime}
\end{figure}

\section{Features Dictionary of Flight QAR Analysis}\label{sec:feature-dictionary}

\begin{singlespace}
\begin{longtable}{| p{.1\textwidth} | p{.7\textwidth} | p{.15\textwidth} |} 
\caption{Feature dictionary for flight QAR data analysis of aggregated measurements.}\\
\hline
\textbf{Code} & \textbf{Definition} & \textbf{Type} \\  
\hline
\endfirsthead
\multicolumn{3}{c}%
{\tablename\ \thetable\ -- \textit{Continued from previous page}} \\
\hline
\textbf{Code} & \textbf{Definition} & \textbf{Type}\\ 
\hline
\endhead
\hline \multicolumn{3}{r}{\textit{Continued on next page}} \\
\endfoot
\hline
\endlastfoot
3000          & Max Airspeed during Flaps Extended                                 & Speed    \\
3001          & Max Airspeed during Landing Gear Extended                          & Speed      \\
3002          & Max Airspeed during Speed Brakes Extended                          & Speed   \\
3005          & Average Airspeed during Climb                                      & Speed   \\
3007          & Average Airspeed during Cruise                                     & Speed   \\
3009          & Average Airspeed during Descent                                    & Speed   \\
3013          & Average Airspeed during Engine Stop                                & Speed   \\
3023          & Average Airspeed during PreFlight                                  & Speed   \\
3027          & Average Airspeed during Taxi In                                    & Speed   \\
3029          & Average Airspeed during Taxi Out                                   & Speed   \\
3035          & Max Groundspeed during Turn                                        & Speed   \\
3036          & Max Groundspeed during Pre Lift Off                                & Speed   \\
3037          & Max Groundspeed during Straight Line                               & Speed   \\
3057          & Time during Lift Off to Landing Gear Selected Up                   & Time \& Dist   \\
3058          & Time during Landing Gear Selected Down to Touchdown                & Time \& Dist  \\
3070          & Time during Climb                                                  & Time \& Dist  \\
3071          & Time during Cruise                                                 & Time \& Dist  \\
3072          & Time during Descent                                                & Time \& Dist  \\
3089          & Time during Touchdown to Start of All Engine Reverser Deployed     & Time \& Dist  \\
3090          & Time during Final 200 knots CAS to Touchdown                       & Time \& Dist  \\
3093          & Max Vertical Acceleration during Flight                            & Acceleration  \\
3094          & Max Vertical Acceleration during Landing                           & Response  \\
3096          & Max Vertical Acceleration during Take Off                          & Acceleration  \\
3104          & Max Vertical Acceleration during Pre Take Off                      & Acceleration  \\
3105          & Max Vertical Acceleration during Post Landing                      & Acceleration  \\
3112          & Average Wind Direction during First 20ft AFE to First 100ft AFE    & External  \\
3113          & Average Angle of Attack during Cruise                              & Attitude \\
3119          & Difference in Fuel Consumed Engine 1 during Lift Off to First Cruise & Fuel\\
3123          & Difference in Fuel Consumed Engine 1 during Climb                  & Fuel  \\
3124          & Difference in Fuel Consumed Engine 1 during Cruise                 & Fuel  \\
3125          & Difference in Fuel Consumed Engine 1 during Descent                & Fuel  \\
3192          & Average Fuel Flow Engine 1 during Climb                            & Fuel  \\
3193          & Average Fuel Flow Engine 1 during Cruise                           & Fuel  \\
3194          & Average Fuel Flow Engine 1 during Descent                          & Fuel  \\
3195          & Average Fuel Flow Engine 1 during Engine Start                     & Fuel  \\
3201          & Average Fuel Flow Engine 1 during PreFlight                        & Fuel  \\
3203          & Average Fuel Flow Engine 1 during Taxi In                          & Fuel  \\
3204          & Average Fuel Flow Engine 1 during Taxi Out                         & Fuel  \\
3253          & Total Ground Distance during Climb                                 & Time \& Dist  \\
3254          & Total Ground Distance during Cruise                                & Time \& Dist  \\
3255          & Total Ground Distance during Descent                               & Time \& Dist  \\
3256          & Total Ground Distance during Engine Start                          & Time \& Dist  \\
3265          & Total Ground Distance during Taxi Out                              & Time \& Dist  \\
3268          & Max Absolute Longitudinal Acceleration during Landing              & Acceleration  \\
3271          & Average Mach during Climb                                          & Speed  \\
3273          & Average Mach during Cruise                                         & Speed  \\
3275          & Average Mach during Descent                                        & Speed  \\
3279          & Average Mach during Engine Stop                                    & Speed  \\
3293          & Average Mach during Taxi In                                        & Speed  \\
3295          & Average Mach during Taxi Out                                       & Speed  \\
3344          & Max Absolute Bank Angle during Final 50ft AFE to Touchdown         & Attitude  \\
3361          & Max Airspeed - Take Off Safety Speed during Slats Only Extended    & Speed  \\
3362          & Max Airspeed - Take Off Safety Speed during Flap Setting 1         & Speed  \\
3363          & Max Airspeed - Take Off Safety Speed during Flap Setting 2         & Speed  \\
3364          & Max Airspeed - Take Off Safety Speed during Flap Setting 3         & Speed  \\
3365          & Max Airspeed - Take Off Safety Speed during Flap Setting 4         & Speed  \\
3366          & Max Airspeed - Take Off Safety Speed during Flap Setting 5         & Speed  \\
3367          & Max Airspeed - Take Off Safety Speed during Flap Setting 6         & Speed  \\
3368          & Max Airspeed - Take Off Safety Speed during Flap Setting 7         & Speed  \\
3384          & Max Wind Speed during First 20ft AFE to First 100ft AFE            & External  \\
3385          & Max Absolute Lateral Acceleration during Landing                   & Acceleration  \\
3387          & Min Maximum Operating Speed - Airspeed during Flight               & Speed  \\
3418          & Max Airspeed - Landing Reference Speed during Slats Only Extended  & Speed  \\
3419          & Max Airspeed - Landing Reference Speed during Flap Setting 1       & Speed  \\
3420          & Max Airspeed - Landing Reference Speed during Flap Setting 2       & Speed  \\
3421          & Max Airspeed - Landing Reference Speed during Flap Setting 3       & Speed  \\
3422          & Max Airspeed - Landing Reference Speed during Flap Setting 4       & Speed  \\
3423          & Max Airspeed - Landing Reference Speed during Flap Setting 5       & Speed  \\
3424          & Max Airspeed - Landing Reference Speed during Flap Setting 6       & Speed  \\
3425          & Max Airspeed - Landing Reference Speed during Flap Setting 7       & Speed  \\
3426          & Max Airspeed - Landing Reference Speed during Flap Setting 8       & Speed \\
\hline
\label{tab:dictionary1}
\end{longtable}
\end{singlespace}

\begin{singlespace}
\begin{longtable}{| p{.1\textwidth} | p{.7\textwidth} | p{.15\textwidth} |} 
\caption{Feature dictionary for flight QAR data analysis of scalar measurements.}\\
\hline
\textbf{Code} & \textbf{Definition} & \textbf{Type} \\  
\hline
\endfirsthead
\multicolumn{3}{c}%
{\tablename\ \thetable\ -- \textit{Continued from previous page}} \\
\hline
\textbf{Code} & \textbf{Definition} & \textbf{Type}\\ 
\hline
\endhead
\hline \multicolumn{3}{r}{\textit{Continued on next page}} \\
\endfoot
\hline
\endlastfoot
1000          & Airspeed at Final Wing Change Configuration before Touchdown & Speed\\
1001          & Airspeed at First Wing Change Configuration after Lift Off   & Speed\\
1002          & Airspeed at First Wing Clean Configuration after Lift Off    & Speed\\
1005          & Airspeed at Landing Gear Selected Down                       & Speed\\
1006          & Airspeed at Landing Gear Selected Up                         & Speed\\
1018          & Altitude at Landing Gear Selected Down                       & Attitude\\
1019          & Altitude at Landing Gear Selected Up                         & Attitude\\
1026          & Altitude at Final 200 knots CAS                              & Attitude\\
1034          & Gross Weight at Start of Flight                              & Weight\\
1050          & Gross Weight at Start of Taxi Out                            & Weight\\
1056          & Groundspeed at Start of Off Runway                           & Speed\\
1057          & Landing Reference Speed at Threshold                         & Speed\\
1058          & Landing Reference Speed at Touchdown                         & Speed\\
1062          & Landing Reference Speed at Final 50ft AFE                    & Attitude\\
1065          & Pitch at Lift Off                                            & Attitude\\
1070          & Pitch Rate at Lift Off                                       & Attitude\\
1075          & Rotation Speed at Rotation                                   & Speed\\
1076          & Take Off Decision Speed, V1 at Lift Off                      & Speed\\
1077          & Take Off Safety Speed, V2 at Lift Off                        & Speed\\
1080          & Temperature at Lift Off                                      & External\\
1081          & Temperature at Touchdown                                     & External\\
1087          & Time at Start of Climb                                       & Time \& Dist\\
1088          & Time at Start of Cruise                                      & Time \& Dist\\
1096          & Time at Start of PreFlight                                   & Time \& Dist\\
1103          & Vertical Speed at Touchdown                                  & Speed\\
1109          & Wind Direction at Lift Off                                   & External\\
1110          & Wind Direction at Touchdown                                  & External\\
1113          & Bank Angle at Touchdown                                      & Attitude\\
1114          & Stabilizer Angle at Rotation                                 & Attitude\\
1115          & Wind Speed at Touchdown                                      &  External\\
\hline
\label{tab:dictionary2}
\end{longtable}
\end{singlespace}

\section{Supplementary Figures of Flight QAR Analysis}\label{sec:supp-fig-qar}

\begin{figure}[H]
\centering
\begin{subfigure}[b]{0.45\textwidth}
   \includegraphics[width=1\linewidth]{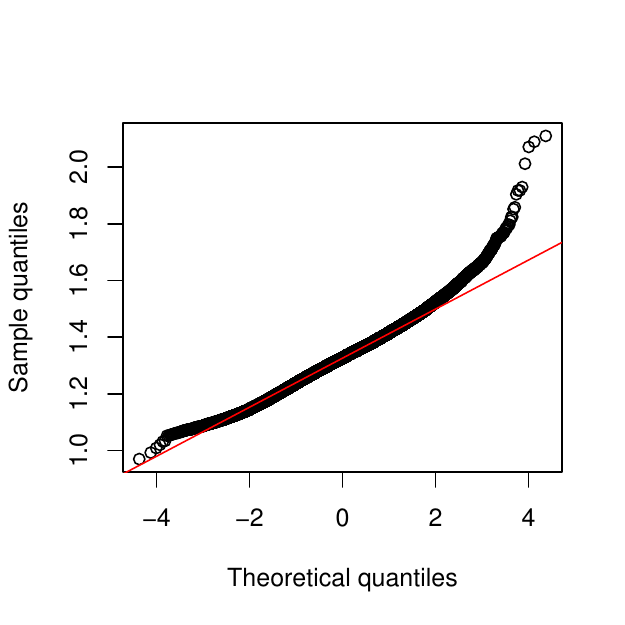}
   \caption{Normal Q-Q plot for the MVA from Airbus 320 records.}
   \label{fig:Np1} 
\end{subfigure}
~
\begin{subfigure}[b]{0.45\textwidth}
   \includegraphics[width=1\linewidth]{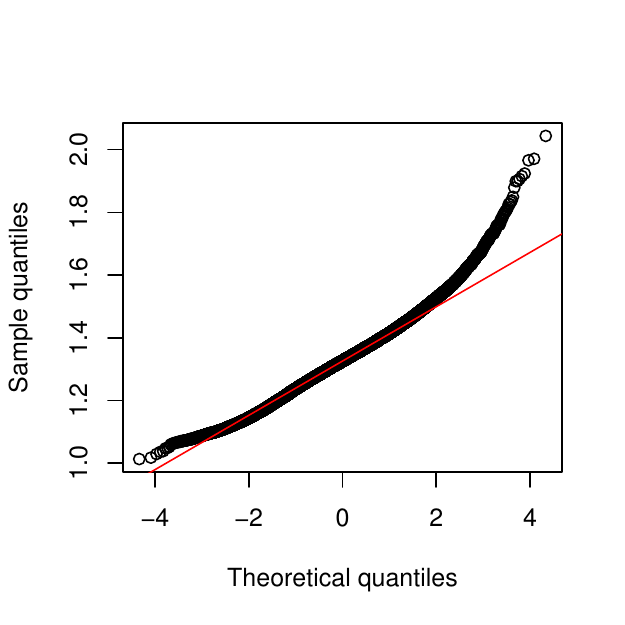}
   \caption{Normal Q-Q plot for the MVA from Boeing 737 records.}
   \label{fig:Np2}
\end{subfigure}
~
\begin{subfigure}[b]{0.45\textwidth}
   \includegraphics[width=1\linewidth]{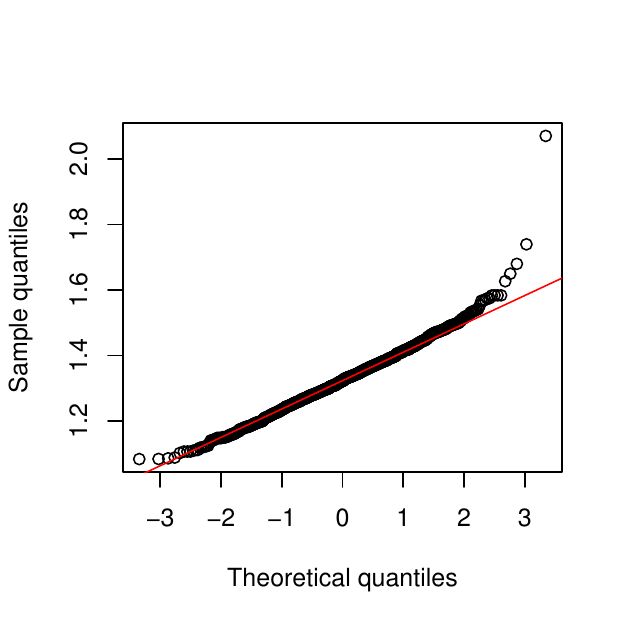}
   \caption{Normal Q-Q plot for the MVA from Airbus 380 records.}
   \label{fig:Np3}
\end{subfigure}

\caption{Evidences that the MVA are highly right skewed and further are not in normal distribution.}
\label{fig:qqmva}
\end{figure}

\begin{figure}[H]
\centering
\includegraphics[width=0.6\textwidth]{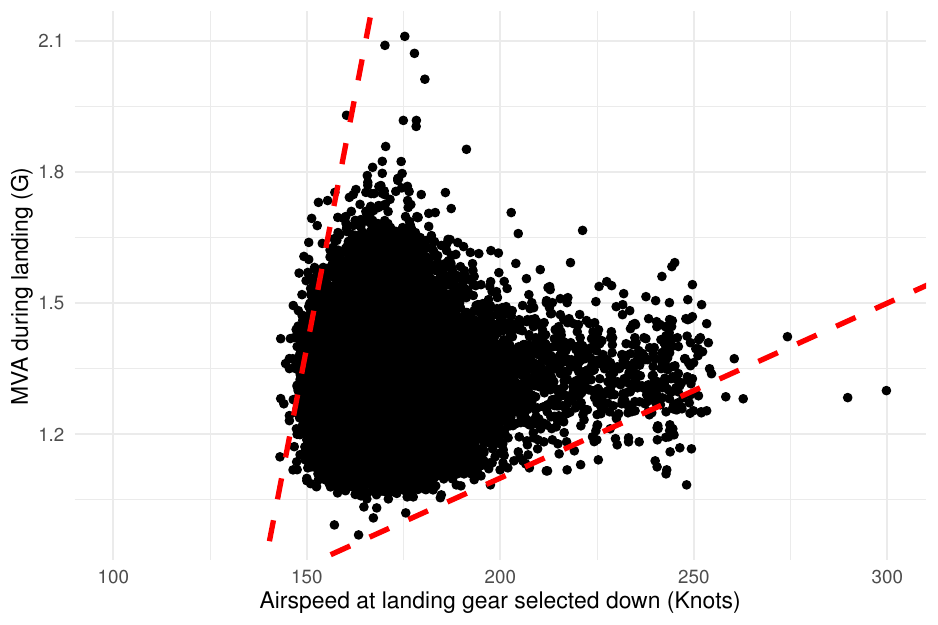}
\caption{Heteroscedasticity of MVA with respect of Airspeed at Landing Gear Selected Down in Airbus 320 dataset.}
\label{fig:hetero-show}
\end{figure}

\bibliographystyle{chicago}

\bibliography{tlqr}
\end{document}